\numberwithin{equation}{section}
\newtheorem{main}{Theorem}
\newtheorem{mprop}[main]{Proposition}
\newtheorem{thm}{Theorem}[section]
\newtheorem*{thm*}{Theorem}
\newtheorem{lem}[thm]{Lemma}
\newtheorem*{prob*}{Problem}
\newtheorem{prop}[thm]{Proposition}
\newtheorem*{prop*}{Proposition}
\newtheorem{cor}[thm]{Corollary}
\newtheorem*{cor*}{Corollary}
\theoremstyle{definition}
\newtheorem{defn}[thm]{Definition}
\newtheorem*{defn*}{Definition}
\newtheorem{remark}[thm]{Remark}
\newtheorem{claim}[thm]{Claim}
\newtheorem*{question*}{Question}
\newtheorem*{Pquestion*}{Popa's question}
\newtheorem*{conv*}{Convention}
\newcommand{\N}{\mathbb{N}}
\newcommand{\R}{\mathbb{R}}
\newcommand{\C}{\mathbb{C}}
\newcommand{\Z}{\mathbb{Z}}
\newcommand{\F}{\mathbb{F}}
\newcommand{\Q}{\mathbb{Q}}
\newcommand{\M}{\mathbb{M}}
\newcommand{\cF}{\mathcal{F}}
\newcommand{\cH}{\mathcal{H}}
\newcommand{\cN}{\mathcal{N}}
\newcommand{\cP}{\mathcal{P}}
\newcommand{\cQ}{\mathcal{Q}}
\newcommand{\cU}{\mathcal{U}}
\newcommand{\cV}{\mathcal{V}}
\newcommand{\cZ}{\mathcal{Z}}
\newcommand{\ee}{\varepsilon}
\newcommand{\Aut}{\operatorname{Aut}}
\newcommand{\eps}{\varepsilon}
\newcommand{\twoone}{II$_1$ }
\begin{document}

\title[On conjugacy  and perturbations of subalgebras in II$_1$ factors]
{On conjugacy and perturbation of subalgebras}

\author[David Gao]{David Gao}
\address{Department of Mathematical Sciences, UCSD, 9500 Gilman Dr, La Jolla, CA 92092, USA}\email{weg002@ucsd.edu}\urladdr{https://sites.google.com/ucsd.edu/david-gao}

\author[Srivatsav Kunnawalkam Elayavalli]{Srivatsav Kunnawalkam Elayavalli}
\address{Department of Mathematical Sciences, UCSD, 9500 Gilman Dr, La Jolla, CA 92092, USA}\email{srivatsav.kunnawalkam.elayavalli@vanderbilt.edu}
\urladdr{https://sites.google.com/view/srivatsavke/home}

\author[Gregory Patchell]{Gregory Patchell}
\address{Department of Mathematical Sciences, UCSD, 9500 Gilman Dr, La Jolla, CA 92092, USA}\email{gpatchel@ucsd.edu}
\urladdr{https://sites.google.com/view/gpatchel/home}

\author[Hui Tan]{Hui Tan}
\address{Department of Mathematical Sciences, UCSD, 9500 Gilman Dr, La Jolla, CA 92092, USA}\email{hutan@ucsd.edu}\urladdr{https://tanhui3.wordpress.com/}

\begin{abstract}

We study conjugacy orbits of certain types of subalgebras in tracial von Neumann algebras. We construct a highly indecomposable non Gamma II$_1$ factor $N$ such that every separable von Neumann subalgebra of $N$ with Haagerup's property admits a unique embedding up to unitary conjugation. Such a factor necessarily has to be non separable, but we show that it can be taken of density character $2^{\aleph_0}$. On the other hand we are able to construct for any separable II$_1$ factor $M_0$, a separable II$_1$ factor $M$ containing $M_0$ such that every property (T) subfactor admits a unique embedding into $M$ up to uniformly approximate unitary equivalence; i.e., any pair of embeddings can be conjugated up to a small uniform $2$-norm perturbation.

\end{abstract}
\maketitle

\section{Statements of main results}

In this paper we make new contributions to the study of conjugacy and perturbation among  subalgebras of tracial von Neumann algebras. These topics have been of interest in the area for many decades, see for instance  \cite{christensen1977perturbation, christensen1979subalgebrasoffinite, PopaCorr, PopaL2Betti, PSSperturbations, popa2006strong, JungTubularity}.

\subsection*{Conjugating Haagerup subalgebras} Our first main result is the construction of the following exotic highly indecomposable non separable non Gamma II$_1$ factor:

\begin{main}\label{intro main theorem A}
   There exists a non Gamma II$_1$ factor $M$ such that   every separable subalgebra of $M$ with Haagerup's property embeds in $M$ uniquely up to unitary conjugacy. 
\end{main}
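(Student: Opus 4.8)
The plan is to obtain $M$ as a von Neumann algebraic inductive limit $M=\overline{\bigcup_{\alpha<\omega_{1}}M_{\alpha}}$ of a continuous increasing transfinite chain of \twoone factors of length $\omega_{1}$, seeded by a separable \twoone factor $M_{0}$ with property (T) (for instance $M_{0}=L(\SL_{3}(\Z))$); this $M_{0}$ is what will force $M$ to be non-Gamma. From $M_{\alpha}$ one builds $M_{\alpha+1}$ by enlarging $M_{\alpha}$ so that every pair of isomorphic separable Haagerup subalgebras of $M_{\alpha}$ becomes unitarily conjugate, and at limit stages one takes the von Neumann algebra generated by the union. The structural invariant carried along the chain is: \emph{$M_{\alpha}$ is a \twoone factor containing $M_{0}$, and the only $M_{0}$-central vectors in $L^{2}(M_{\alpha})$ are the scalar multiples of the canonical trace vector} -- equivalently, the $M_{0}$-$M_{0}$ bimodule $L^{2}(M_{\alpha})\ominus L^{2}(M_{0})$ has no nonzero $M_{0}$-central vector. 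Property (T) of $M_{0}$ will upgrade this invariant into a spectral gap estimate for $M_{\alpha}$ whose constants do not depend on $\alpha$, and this uniformity is exactly what lets the non-Gamma property survive passage to the inductive limit.

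The enlargement is performed with HNN extensions. Given $M_{\alpha}$ and a triple $(P_{1},P_{2},\theta)$ consisting of separable Haagerup subalgebras $P_{1},P_{2}\subseteq M_{\alpha}$ together with a trace-preserving isomorphism $\theta\colon P_{1}\to P_{2}$, form the tracial HNN extension $\widetilde{M_{\alpha}}=\operatorname{HNN}(M_{\alpha},P_{1},\theta)$ with stable unitary $u$ obeying $uxu^{*}=\theta(x)$ for $x\in P_{1}$, so that $uP_{1}u^{*}=P_{2}$. Since $M_{\alpha}\supseteq M_{0}$ contains a property (T) subalgebra it is not Haagerup, whence every Haagerup subalgebra has infinite Jones index in $M_{\alpha}$; by the standard factoriality criteria for HNN extensions of \twoone factors over infinite-index inclusions, $\widetilde{M_{\alpha}}$ is again a \twoone factor containing $M_{\alpha}$. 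Carrying this out simultaneously over a set of representatives of all such triples in $M_{\alpha}$ -- an iterated HNN extension -- produces $M_{\alpha+1}$, in which every pair of embeddings $\pi,\pi'$ of a separable Haagerup algebra $P$ with $\pi(P),\pi'(P)\subseteq M_{\alpha}$ has been made unitarily conjugate, via the triple $(\pi(P),\pi'(P),\pi'\circ\pi^{-1})$; a finite-dimensional direct summand of $P$ is handled beforehand inside $M_{\alpha}$ by the usual conjugacy of projections, so there is no loss in taking the $P_{i}$ diffuse.

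The heart of the argument, and the step I expect to be the main obstacle, is that this enlargement preserves the invariant. By the reduced-word (Bass--Serre) description of the HNN extension, the $M_{\alpha}$-$M_{\alpha}$ bimodule $L^{2}(\widetilde{M_{\alpha}})\ominus L^{2}(M_{\alpha})$ is an orthogonal direct sum of $M_{\alpha}$-$M_{\alpha}$ bimodules of the form $L^{2}(M_{\alpha})\otimes_{Q_{1}}L^{2}(M_{\alpha})\otimes_{Q_{2}}\cdots\otimes_{Q_{k}}L^{2}(M_{\alpha})$ with each $Q_{j}\in\{P_{1},P_{2}\}$ (and likewise for the iterated version). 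An $M_{0}$-central vector in $L^{2}(\widetilde{M_{\alpha}})\ominus L^{2}(M_{\alpha})$ decomposes into $M_{0}$-central vectors of these summands, and a nonzero $M_{0}$-central vector in such a summand would, through Popa's intertwining-by-bimodules, produce an intertwiner $M_{0}\preceq_{M_{\alpha}}Q_{j}$ for some $j$; this forces an amplified corner of the property (T) factor $M_{0}$ to embed into a corner of the Haagerup algebra $Q_{j}$, which is impossible, since corners of property (T) factors have property (T), corners of Haagerup algebras have the Haagerup property, and no diffuse property (T) von Neumann algebra embeds into a Haagerup one. Hence $L^{2}(\widetilde{M_{\alpha}})\ominus L^{2}(M_{\alpha})$ has no nonzero $M_{0}$-central vector, and together with the inductive hypothesis this shows the invariant passes to $M_{\alpha+1}$. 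At a limit stage $\lambda$, factoriality of $M_{\lambda}=\overline{\bigcup_{\alpha<\lambda}M_{\alpha}}$ follows from extremality of the inductive-limit trace, and the invariant persists because property (T) of $M_{0}$ forces the space of $M_{0}$-central vectors in $L^{2}(M_{\lambda})$ to be the closure of the union of those in the $L^{2}(M_{\alpha})$, each of which is one-dimensional.

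Finally one assembles everything. Property (T) of the separable factor $M_{0}$ furnishes a finite set $F_{0}\subseteq M_{0}$ and $\delta_{0}>0$ such that every $M_{0}$-$M_{0}$ bimodule without a nonzero $M_{0}$-central vector satisfies $\max_{x\in F_{0}}\|x\xi-\xi x\|_{2}\ge\delta_{0}\|\xi\|_{2}$ for all $\xi$; applying this to $L^{2}(M_{\alpha})\ominus L^{2}(M_{0})$ with $\xi=\widehat{y-E_{M_{0}}(y)}$, and using that for $x\in M_{0}$ one has $x\xi-\xi x=\widehat{[x,y]-E_{M_{0}}([x,y])}$, gives $\|y-E_{M_{0}}(y)\|_{2}\le\delta_{0}^{-1}\max_{x\in F_{0}}\|[x,y]\|_{2}$ for every $y$ in the unit ball of $M_{\alpha}$, uniformly in $\alpha$; combining this with the (fixed) spectral gap of $M_{0}$ itself produces a single inequality $\|y-\tau(y)\|_{2}\le K_{0}\max_{x\in F_{0}'}\|[x,y]\|_{2}$ valid in the unit ball of every $M_{\alpha}$, hence, by $\|\cdot\|_{2}$-continuity and Kaplansky density, valid in the unit ball of $M$; so $M$ is non-Gamma (indeed $M'\cap M^{\mathcal U}=\C$ for every ultrafilter $\mathcal U$). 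For the homogeneity, a separable Haagerup subalgebra $\pi(P)\subseteq M$ is generated by countably many elements, each of which is a $\|\cdot\|_{2}$-limit of elements of $\bigcup_{\alpha}M_{\alpha}$, so by $\operatorname{cf}(\omega_{1})>\omega$ it already lies in some $M_{\alpha}$; given any other embedding $\pi'$ with $\pi'(P)\subseteq M_{\beta}$, the triple $(\pi(P),\pi'(P),\pi'\circ\pi^{-1})$ is resolved at stage $\max(\alpha,\beta)+1$, so $\pi$ and $\pi'$ are unitarily conjugate inside $M$. The resulting $M$ has density character at most $2^{\aleph_{0}}$ (and is necessarily non-separable), completing the construction of the desired factor.
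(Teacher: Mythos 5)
Your overall architecture is the same as the paper's: seed a transfinite chain with the property (T) factor $M_0=L(\SL_3(\Z))$, enlarge by HNN extensions to conjugate pairs of embedded separable Haagerup subalgebras, maintain the invariant $M_0'\cap M_\alpha=\C$ by playing property (T) against the Haagerup property of the amalgam, and catch separable subalgebras in the limit by a cofinality argument (your length-$\omega_1$ chain handling all triples per step is equivalent bookkeeping to the paper's length-$2^{\aleph_0}$ chain handling one triple per step). Your non-Gamma argument is a genuine, and correct, variant: you extract a uniform spectral-gap inequality $\|y-\tau(y)\|_2\le K_0\max_{x\in F_0}\|[x,y]\|_2$ from property (T) of $M_0$ plus irreducibility, which passes to $M$ and to $M^\cU$; the paper instead uses that property (T) gives $M_0'\cap M^\cU=(M_0'\cap M)^\cU=\C$. (Also, the limit-stage persistence of the invariant is elementary -- relative commutants commute with increasing unions -- and does not need property (T).)

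The genuine gap is at the heart of the successor step. You assert that a nonzero $M_0$-central vector in a word bimodule $L^2(M_\alpha)\otimes_{Q_1}\cdots\otimes_{Q_k}L^2(M_\alpha)$ ``through Popa's intertwining-by-bimodules'' yields $M_0\prec_{M_\alpha}Q_j$ for some $j$. Popa's criterion gives this only for $k=1$ (a central vector in $L^2\langle M_\alpha,e_{Q}\rangle$); for $k\ge 2$ the implication is not formal -- it is exactly the technical content of IPP-type theorems, and proving it for HNN extensions is the work that your one-line justification skips. This is precisely why the paper realizes $\mathrm{HNN}(M_\alpha,P_1,\theta)$ as a unital subalgebra $\Phi(M_\alpha,P_1,\theta)$ of an iterated amalgamated free product (built from free wreath products), so that the relative commutant control $M_0'\cap\mathrm{HNN}(M_\alpha,P_1,\theta)\subset M_\alpha$ follows from the black-box Theorem 1.1 of IPP (Corollary \ref{hnn-with-(T)} in the paper); alternatively you would need to invoke the HNN intertwining results of Fima--Vaes or Ueda rather than treat the step as routine. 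A secondary (but removable) blemish: your factoriality claim for $\widetilde{M_\alpha}$ via ``every Haagerup subalgebra has infinite Jones index'' and ``standard factoriality criteria for HNN extensions'' is not a citable fact in the generality you need (arbitrary separable Haagerup subalgebras, possibly non-factors); fortunately it is unnecessary, since your own invariant $M_0'\cap M_\alpha=\C$ already forces each $M_\alpha$ (and the limit $M$) to be a factor, just as in the paper, where the intermediate algebras are only required to be tracial. Likewise the reduction to diffuse $P_i$ via conjugating finite-dimensional summands is not needed: the HNN construction accepts any trace-preserving isomorphism of subalgebras.
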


We first describe some key new ideas underlying our construction. Our II$_1$ factor is built via a concrete inductive construction consisting of iterated amalgamated free products, and is  tailored  to satisfy the conjugacy condition above. One of the main challenges one typically faces in this situation is to maintain non Gamma in the inductive limit. We adopt a strategy similar to \cite{CIKE23}: begin the construction with a diffuse property (T) von Neumann algebra and then carefully  apply the structure theorems of \cite{IPP} at each stage of  our construction, to get that the initial property (T) subalgebra is irreducible in the entire union. What allows us to carry this out is a new insight we make on the HNN extension construction in von Neumann algebra theory \cite{uedahnn, fimavaes}, which by design allows one to enlarge the algebra where one can conjugate isomorphic subalgebras.   We design an a priori new construction involving amalgamated free products with free wreath products to achieve the end goal of an HNN extension and prove that it is indeed isomorphic to the HNN extension, via a universality argument. It is known that the HNN extension  is isomorphic to a corner of an amalgamated free product (see Proposition 3.1 of \cite{ueda2008remarks}). The contribution we make here is to show that the HNN extension is a natural unital subalgebra of an amalgamated free product, allowing  us some  ease in applying the non intertwining criteria of \cite{IPP}. Upon arranging a  careful ordering of pairs of the subalgebras with Haagerup's property we inductively apply our HNN construction to conjugate them. Note that the passage to non separable is inevitable if  we are to  handle simultaneously all pairs of separable Haagerup subalgebras, as every abelian subalgebra has Haagerup's property. Moreover, our factor can be taken of density character $2^{\aleph_0}$. Note also that we can choose our $M$ to contain a copy of all separable Haagerup von Neumann algebras simply by beginning the iterative construction with the tensor product of all separable Haagerup von Neumann algebras. Another key ingredient in carrying out the entire procedure is a  flexibility involving bypassing taking  closures around certain limit ordinals (see also \cite{FH22}). 

 Our construction can also be naturally modified to yield (see Corollary \ref{EC cor}) a new approach to constructing existentially closed II$_1$ factor \cite{ecfactors}. This is a model theoretic notion that has been studied in the literature recently \cite{AGKE, goldbring2023uniformly, ecfactortensor,ioana2023existential}. Note that existentially closed factors possess property Gamma (they are moreover McDuff \cite{goldbring2013theory}), hence will necessarily not be isomorphic to the construction in Theorem A. As such  the Haagerup assumption cannot be removed from Theorem A,  for this reason.
 \begin{main}\label{main-B}
     There exists a II$_1$ factor $(N,\tau)$ of density character $2^{\aleph_0}$ that contains a unique copy of each separable tracial von Neumann algebra up to unitary conjugacy. Such a II$_1$ factor is necessarily existentially closed.
 \end{main}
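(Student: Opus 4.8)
The plan is to produce $N$ by a transfinite recursion of length $\kappa := 2^{\aleph_0}$ and then to derive existential closedness for free from the uniqueness statement. I would start from any separable II$_1$ factor $M_0$. There are exactly $2^{\aleph_0}$ isomorphism classes of separable tracial von Neumann algebras (at most continuum by a cardinality count, and that many by, e.g., McDuff's continuum of pairwise non-isomorphic separable II$_1$ factors), and an algebra of density character at most $\kappa$ has at most $\kappa^{\aleph_0}=\kappa$ separable subalgebras and at most $\kappa$ trace-preserving isomorphisms between any two of them; so a standard bookkeeping produces a length-$\kappa$ schedule of \emph{tasks}, each either of type (i) ``adjoin a copy of the separable tracial von Neumann algebra $Q$'' or of type (ii) ``conjugate the triple $(P,P',\theta)$'', where $P,P'$ are separable subalgebras of the current algebra and $\theta\colon P\to P'$ is a trace-preserving isomorphism, subject to the discipline that every task ever made meaningful is executed at some later stage. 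Along this schedule I build an increasing chain of II$_1$ factors $(M_\alpha)_{\alpha\le\kappa}$ of density character at most $\kappa$. At a successor stage $\alpha+1$ with a type-(i) task I set $M_{\alpha+1}=M_\alpha * Q$, a II$_1$ factor (the reduced free product over $\C$ of a II$_1$ factor with a nontrivial tracial von Neumann algebra) of the same density character, containing a unital trace-preserving copy of $Q$; with a type-(ii) task I set $M_{\alpha+1}=\mathrm{HNN}(M_\alpha,P,\theta)$, the HNN-type extension realized --- exactly as in the preceding sections, via an amalgamated free product with a free wreath product --- as a unital subalgebra of an amalgamated free product, so that $M_{\alpha+1}$ is again a II$_1$ factor of the same density character and contains a unitary $u$ with $\Ad(u)|_P=\theta$, hence $uPu^*=P'$. (In contrast with Theorem~A no non-Gamma constraint has to be maintained, so type-(ii) tasks may be formed from \emph{all} separable subalgebras.) At a limit stage $\lambda$ I let $M_\lambda$ be the von Neumann algebra generated by $\bigcup_{\alpha<\lambda}M_\alpha$; this is again a II$_1$ factor, since for central $z$ the trace-preserving conditional expectation $E_{M_\alpha}(z)$ is central in $M_\alpha$, hence scalar, and converges to $z$; and --- the point that pins the output to density character exactly $\kappa$ --- when $\mathrm{cf}(\lambda)>\aleph_0$ the union $\bigcup_{\alpha<\lambda}M_\alpha$ is already a von Neumann algebra and no closure is taken.

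Set $N:=M_\kappa$. Since $\mathrm{cf}(2^{\aleph_0})>\aleph_0$, every separable subalgebra of $N$ already lies in some $M_\alpha$ with $\alpha<\kappa$, so by the bookkeeping every meaningful task has been executed: each separable tracial von Neumann algebra embeds into $N$, and for any two embeddings $\sigma_1,\sigma_2\colon Q\to N$ the type-(ii) task for $(\sigma_1(Q),\sigma_2(Q),\sigma_2\sigma_1^{-1})$ supplies a unitary $u\in N$ with $u\sigma_1(x)u^*=\sigma_2(x)$ for all $x\in Q$. The density character of $N$ is at most $\kappa$ since each step adjoins at most countably many generators and $\sigma$-weak closure preserves density character, and at least $\kappa$ since cofinally many steps (the $\kappa$ many type-(i) ones) strictly enlarge the algebra while $\mathrm{cf}(2^{\aleph_0})>\aleph_0$; hence $\mathrm{dc}(N)=2^{\aleph_0}$.

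For the ``necessarily'' clause it suffices to show that \emph{any} II$_1$ factor $N$ with the stated uniqueness property is existentially closed, for which the key point is a separable homogeneity statement: for every separable subalgebra $N_0\subseteq N$ and every separable tracial von Neumann algebra $M_0$ containing $N_0$ there is an embedding $\rho\colon M_0\to N$ with $\rho|_{N_0}=\id$. Indeed $M_0$ embeds into $N$ via some $\sigma$; comparing $\sigma|_{N_0}$ with the inclusion $N_0\hookrightarrow N$ --- two embeddings of the same separable algebra --- uniqueness furnishes a unitary $u\in N$ with $u\sigma(x)u^*=x$ on $N_0$, and $\rho:=\Ad(u)\circ\sigma$ works. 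Now let $M$ be any II$_1$ factor containing $N$, let $\varphi(\bar x)=\inf_{\bar y}\psi(\bar y,\bar x)$ be an existential formula with $\psi$ quantifier-free, and let $\bar a\in N$; choose $\bar b_n\in M$ with $\psi^M(\bar b_n,\bar a)\to\varphi^M(\bar a)$, take $N_0$ the separable subalgebra of $N$ generated by $\bar a$ and $M_0$ the separable subalgebra of $M$ generated by $\bar a$ together with all the $\bar b_n$, and apply the homogeneity statement to get $\rho\colon M_0\to N$ fixing $N_0$. Since $\rho$ is a trace-preserving $*$-homomorphism it preserves the values of quantifier-free formulas, so $\psi^N(\rho(\bar b_n),\bar a)=\psi^M(\bar b_n,\bar a)\to\varphi^M(\bar a)$, whence $\varphi^N(\bar a)\le\varphi^M(\bar a)$; the opposite inequality holds trivially because $N\subseteq M$. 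Thus every existential formula takes the same value in $N$ and in $M$ on parameters from $N$, i.e.\ $N$ is existentially closed; only a separable slice of $M$ entered the argument, so the density character of $M$ is immaterial.

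The one genuinely new input is the HNN/free-wreath-product machinery of the earlier sections, which guarantees that adjoining the conjugating unitary keeps the algebra a II$_1$ factor; granting that, the step I expect to require the most care is the transfinite bookkeeping --- arranging the length-$\kappa$ schedule so that every type-(ii) task eventually fires while the density character is held at $2^{\aleph_0}$ --- and this is exactly where the flexibility of not closing up at limits of uncountable cofinality does its work. The existential-closedness half is then a soft deduction using no further von Neumann algebraic input.
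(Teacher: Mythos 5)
Your construction of $N$ is essentially the paper's: a transfinite HNN bookkeeping of length $2^{\aleph_0}$, with no closure taken at stages of uncountable cofinality, so that every separable subalgebra and every pair of its embeddings is caught at some stage. The only real deviations are cosmetic or methodological: you adjoin the separable algebras one at a time by free products, where the paper seeds the induction with a single factor of density character $2^{\aleph_0}$ containing all of them (a tensor product of all separable II$_1$ factors); and for the "necessarily existentially closed" clause you prove a homogeneity statement (any embedding of a separable extension of a separable subalgebra can be corrected by a unitary so as to fix that subalgebra) and then verify the existential-formula characterization of e.c.\ directly, whereas the paper goes through Downward Löwenheim--Skolem, separable elementary substructures, and the lifting lemma for e.c. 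Your route for that half is correct and arguably more direct, modulo the standard equivalence between the formula characterization and the ultrapower definition of e.c.\ that the paper uses.

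There is, however, a genuine flaw in your factoriality bookkeeping. It is not true that $\mathrm{HNN}(M_\alpha,P,\theta)$ is a II$_1$ factor whenever $M_\alpha$ is: for example $\mathrm{HNN}(M,M,\mathrm{id})\cong M\mathbin{\bar{\otimes}} L\Z$ (the universal property sends the stable letter to the generator of $L\Z$, and all reduced words have trace zero there), whose center is $L\Z$; and tasks of exactly this shape are on your schedule at the early stages, where the current algebra is still separable and $P=P'=M_\alpha$, $\theta=\mathrm{id}$ is a legitimate type-(ii) task. Consequently your successor-stage claim and the limit-stage argument ("$E_{M_\alpha}(z)$ is scalar") fail as written, and the assertion that $N$ is a II$_1$ factor is left unsupported. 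The conclusion is salvageable without touching the schedule: factoriality of the final $N$ follows from the uniqueness property itself. If $z\in\cZ(N)$ were a projection with $t:=\tau(z)\in(0,1/2]$, then since $N$ contains a unital copy of $R$ commuting with $z$, the corner $(1-z)N(1-z)$ contains $R(1-z)\cong R$ and hence a projection $f_2\le 1-z$ with $\tau(f_2)=t$; the projections $f_1=z$ and $f_2$ have equal trace but $\tau(zf_1)=t\neq 0=\tau(zf_2)$, and unitary conjugation fixes $z$, so the two trace-preserving unital embeddings of $\C\oplus\C$ (with weights $(t,1-t)$) they determine are not unitarily conjugate --- contradiction. (Some such argument is needed in any case, since the intermediate algebras need not be factors in the paper's construction either.) Relatedly, your lower bound on the density character --- "cofinally many steps strictly enlarge the algebra" --- is not a proof, since a strictly increasing chain of length $2^{\aleph_0}$ can in principle have a much smaller dense set. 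The fix is already inside your construction: each type-(i) step that adjoins a diffuse algebra freely produces a trace-zero Haar unitary freely independent from everything built before, so these $2^{\aleph_0}$ many unitaries are pairwise orthogonal, hence pairwise at $\|\cdot\|_2$-distance $\sqrt{2}$, which forces density character at least $2^{\aleph_0}$ (the paper gets this for free by starting from an algebra of density character $2^{\aleph_0}$).
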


\subsection*{Indecomposability and structural properties} We now describe the various indecomposability and structural properties of our factor $M$ from Theorem \ref{intro main theorem A}. These indicate that $M$ cannot  arise from more or less any standard von Neumann algebraic construction.  The following results here are recorded in Proposition \ref{ucc main prop}. 

First we have that $M\ncong \prod_{i\to \mathcal{U}}N_i$ where $N_i$ are finite factors and $\mathcal{U}$ is a countably incomplete ultrafilter on any infinite set. This follows from Corollary 3.8 of 
\cite{ScottSri2019ultraproduct}.  This result guarantees the existence inside any ultraproduct of II$_1$ factors of two embeddings of the free group factor $L(\mathbb{F}_2)$ that are not unitarily conjugate (in fact the result shows the same  for any Connes-embeddable non amenable II$_1$ factor). Since $L(\mathbb{F}_2)$ has Haagerup's property this contradicts the main feature of our construction.

 Results of \cite{popaortho} (see also \cite{kadisonsingerpopa, sorincourse})  prove   the following indecomposability properties for $M$ (in each case a pair  of Haar unitaries that are not conjugate are identified): $M$ does not admit a diffuse regular subalgebra $M_0$ such that there is a Haar unitary $u\in \mathcal{U}(M)$ satisfying $\{u\}''\perp M_0$. In particular $M\ncong N_1\mathbin{\bar{\otimes}} N_2$ where $N_i$ are II$_1$ factors; $M \ncong N\rtimes G$ where $N$ is a diffuse tracial von Neumann algebra and $G$ is an infinite group. Also, $M$ does not admit Cartan subalgebras. 
 
  We also obtain  using an elementary argument involving normal form decompositions, the following: $M\ncong N_1*_BN_2$ where $N_1$ is diffuse and there exists $u_i\in \mathcal{U}(N_i)$ for $i=1,2$ such that $\mathbb{E}_B(u_i)=0$; $M\ncong \text{HNN}(N, N_0, \theta)$ where $N$ is diffuse. Also, $M\ncong L(G)$ where  $G$ is a discrete group that admits two elements $g,h\in G$ such that  $g^n$ is not conjugate to $h^n$ for  all $n\in \mathbb{N}$.   
  
  As a final remark  one can also take  $h(M)\leq 0$ where $h$ denotes the 1-bounded entropy (\cite{Hayes2018, JungSB}). This result can be obtained by weaving into our inductive limit the 2-handle construction in \cite{CIKE23}, thereby ensuring sequential commutation which forces 1-bounded entropy to be non positive (see also \cite{patchellelayavalli2023sequential}). Moreover, by the arguments in \cite{patchellelayavalli2023sequential} our construction can be shown to admit a unique sequential commutation orbit internally in  the factor.

   \subsection*{Conjugating property (T) subalgebras} In the setting of working with property (T) subalgebras, we are able to achieve a result similar to Theorem \ref{intro main theorem A}, the significant difference being that the  resulting construction can be made separable as opposed to the setting of conjugating Haagerup subalgebras in which case the construction is forced to be non separable. However the natural notion of equivalence to consider here is what we refer to as \emph{uniformly approximate unitary equivalence}, as opposed to on the nose unitary equivalence, which is not likely possible to aim for in general in this setting in light of Remark \ref{remark crazy}.  We define this concept as: $\pi_i:N \to M$, $i=1,2$ are uniformly approximate unitary equivalent embeddings, if for all $\eps>0$ there exists a unitary $u\in \mathcal{U}(M)$ such that $\|\mathbb{E}_{\pi_2(N)}(u\pi_1(x)u^*)-u\pi_1(x)u^*\|_2<\eps$ for every $x\in (N)_1$). 

\begin{main}\label{main-(T)}
        For any separable II$_1$ factor $M_0$ there exists a separable II$_1$ factor $M$ containing $M_0$ such that any diffuse property (T) von Neumann subalgebra $N\subset M$ admits a unique embedding into $M$ up to uniform approximate unitarily equivalence 
\end{main}

We actually prove a stronger statement than the above (see Theorem \ref{stronger statement}) which also further demonstrates the optimality of the result. The proof of Theorem \ref{main-(T)} uses a previously-studied metric between subalgebras (see \cite{christensen1977perturbation,christensen1977perturbationsII,christensen1979subalgebrasoffinite,PSSperturbations,WangPerturbations}); we denote this metric $d$. When two subalgebras are close with respect to $d,$ they have large corners which are isomorphic \cite{PSSperturbations}; similarly, if one subalgebra is almost included in another then there exists a genuine inclusion on large corners \cite{christensen1979subalgebrasoffinite}. We also argue as in \cite{PopaCorr} that the set of property (T) subalgebras of a separable \twoone factor is separable with respect to $d$ (see also \cite{ConnesCountable,anantharaman-popa}). These notions are made precise and estimates are given in Section \ref{sect-pf-of-(T)-conj}. 

We now describe the construction of $M$ in Theorem \ref{main-(T)}.  We proceed by induction as in Theorem \ref{intro main theorem A} (see also \cite{CIKE23}) to construct $M$ from $M_0.$ Given \twoone factors $M_0\subset M_1\subset\ldots\subset M_n,$ we pick a dense set $D_n$ of property (T) subalgebras and recursively apply our HNN extension to get that every pair of subalgebras $B_1,B_2 \in D_n$ which are isomorphic on large corners are nearly unitarily conjugate in $M_k$ for some $k\geq n$. We define $M$ to be the inductive limit of the $M_n.$ Given two isomorphic property (T) subalgebras $N_i$, $i=1,2$ of $M,$ we use property (T) and \cite{christensen1979subalgebrasoffinite} to embed large corners of each $N_i$ into some $M_n$. Each $N_i$ is $d$-close to some $B_i$ in $D_n$, forcing $B_1,B_2$ to be isomorphic on large corners. By construction, the $B_i$ are nearly unitarily equivalent in $M$ so that the $N_i$ are also nearly unitarily equivalent.

    One consequence of uniformly approximate unitary equivalence is bi-intertwining (\cite{PopaCorr, popa2006strong}): if $\pi_i:N \to M$, $i=1,2$ are two uniformly approximately  unitarily equivalent embeddings, then it follows that $\pi_1(N)\prec_M\pi_2(N)$ and $\pi_2(N)\prec_M\pi_1(N)$. This follows from results of \cite{christensen1979subalgebrasoffinite}, see also \cite{PopaCorr, PSSperturbations}. This puts certain non trivial  restrictions on the structure of $M$; for instance, this implies that our factor $M$ from above cannot be isomorphic to $M_1*M_2$ where each $M_i$ contains a copy of the same diffuse property (T) subalgebra. These two embeddings of of this subalgebra cannot bi-intertwine, contradicting this feature of our construction for $M$.
\subsection*{Two results of independent interest}
We conclude  the paper with two results of independent interest which arose from related considerations. The first is a genericity statement about elements generating factors inside tracial von Neumann algebras, see Proposition \ref{factor-lifting}. 

\begin{mprop}
        Let $\{x_1, \cdots, x_n\} \subset (M)_1$ be a finite set of elements of operator norm at most $1$ in a \twoone factor $M$. Then for any $\eps > 0$, there exists $\{y_1, \cdots, y_n\} \subset (M)_1$, elements of operator norm at most $1$, such that $\{y_1, \cdots, y_n\}$ generates a subfactor of $M$ and furthermore $\|y_i - x_i\|_2 < \eps$ for all $i$.
\end{mprop}

A delicate perturbation theorem in the above flavor has been obtained recently in \cite{ioana2024tracespacesfreeproduct}, with applications to certain problems on trace spaces.     

The second result is a sentence  that perhaps could be used  effectively to distinguish between non Gamma factors up to elementary equivalence, see Proposition \ref{gamma-prop}.    

\textbf{Acknowledgements} We thank Adrian Ioana, David Jekel and Jesse Peterson for several helpful comments and suggestions. We thank David Jekel and Ben Hayes for suggesting the proof of Proposition \ref{factor-lifting}. We would like to thank Brent Nelson without whose timely support this paper would not have materialized.  
\section{Preliminaries}

\subsection{Notation and Background for \twoone Factors}

In a tracial von Neumann algebra $(M,\tau),$ $\|\cdot\|_2$ is the norm defined by $\|x\|_2 = \sqrt{\tau(x^*x)}$. All inclusions of von Neumann algebras are unital unless otherwise specified. If $N\subset (M,\tau)$ is a von Neumann subalgebra, then there is a unique normal trace-preserving conditional expectation $E_N:M\to N.$  

If $(M_\lambda)_{\lambda < \kappa} \subset N$ is an increasing chain of tracial von Neumann subalgebras with compatible traces of a von Neumann algebra $N$, then $M := \overline{\cup_{\lambda<\kappa} M_\lambda}^{SOT}$ is a tracial von Neumann subalgebra of $N$ called the \emph{inductive limit} of $M_\lambda.$  The following is a basic fact (see, for example, Lemma 2.3 of \cite{patchell2023primess}). 

\begin{lem}
    Let $(M,\tau) = \overline{\cup_{\lambda < \kappa} M_\lambda}^{SOT}$ be a tracial von Neumann algebra realized as an inductive limit where $\kappa$ is an ordinal. Then $M_1'\cap M = \overline{\cup_{\lambda < \kappa} M_1'\cap M_\lambda}^{SOT}$.
\end{lem}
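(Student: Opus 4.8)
The statement asserts an equality of two von Neumann subalgebras of $M$, and the plan is to treat the two inclusions separately. The inclusion $\supseteq$ is immediate: for each $\lambda<\kappa$ one has $M_1'\cap M_\lambda\subseteq M_1'\cap M$, and $M_1'\cap M$ is SOT-closed, so $\overline{\bigcup_{\lambda<\kappa}M_1'\cap M_\lambda}^{SOT}\subseteq M_1'\cap M$. For the reverse inclusion I would use the standard averaging argument: take an arbitrary $x\in M_1'\cap M$, approximate it by its trace-preserving conditional expectations $E_{M_\lambda}(x)$ onto the $M_\lambda$, and check these expectations still commute with $M_1$ so that they live in the right-hand side.

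Concretely, fix $x\in M_1'\cap M$. Since $M_1$ exists in the chain we have $1\le\kappa$, and for every $\lambda$ with $1\le\lambda<\kappa$ one has $M_1\subseteq M_\lambda$, so $E_{M_\lambda}$ is an $M_1$-bimodule map. Hence for all $a\in M_1$,
\[
E_{M_\lambda}(x)\,a=E_{M_\lambda}(xa)=E_{M_\lambda}(ax)=a\,E_{M_\lambda}(x),
\]
using $x\in M_1'$, so $E_{M_\lambda}(x)\in M_1'\cap M_\lambda$. Next I would show $E_{M_\lambda}(x)\to x$ in $\|\cdot\|_2$ as $\lambda\to\kappa$. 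Since $\bigcup_{\lambda<\kappa}M_\lambda$ is SOT-dense in $M$ it is also $\|\cdot\|_2$-dense (SOT convergence implies $\|\cdot\|_2$-convergence), so given $\varepsilon>0$ there are $\lambda_0$ with $1\le\lambda_0<\kappa$ and $y\in M_{\lambda_0}$ with $\|x-y\|_2<\varepsilon$. By the tower property of conditional expectations and the fact that each $E_{M_\lambda}$ is a $\|\cdot\|_2$-contraction fixing $M_{\lambda_0}$ pointwise, for all $\lambda\ge\lambda_0$
\[
\|x-E_{M_\lambda}(x)\|_2\le\|x-E_{M_{\lambda_0}}(x)\|_2\le\|x-y\|_2+\|E_{M_{\lambda_0}}(y-x)\|_2<2\varepsilon .
\]

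Finally I would put $P:=\overline{\bigcup_{\lambda<\kappa}M_1'\cap M_\lambda}^{SOT}$; since the subalgebras $M_1'\cap M_\lambda$ increase with $\lambda$, their union is a unital $*$-subalgebra and $P$ is a von Neumann subalgebra of $M$, hence $\|\cdot\|_2$-closed in $M$ (if $p_n\in P$ and $p_n\to p\in M$ in $\|\cdot\|_2$, then $p-E_P(p)=(p-p_n)-E_P(p-p_n)$ yields $\|p-E_P(p)\|_2\le 2\|p-p_n\|_2\to 0$, so $p\in P$). As $P$ contains every $E_{M_\lambda}(x)$ and $x$ is their $\|\cdot\|_2$-limit, $x\in P$, giving the inclusion $M_1'\cap M\subseteq P$. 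The computations here are routine; the only point requiring a moment's care — the nearest thing to an obstacle — is the interchange of the SOT- and $\|\cdot\|_2$-closures, i.e.\ recording that a von Neumann subalgebra of a tracial von Neumann algebra is automatically $\|\cdot\|_2$-closed, so that the SOT-closure $P$ already absorbs all $\|\cdot\|_2$-limits of elements of the increasing union.
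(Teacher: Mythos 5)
Your proof is correct: the inclusion $\supseteq$ from SOT-closedness of $M_1'\cap M$, the observation that $E_{M_\lambda}(x)\in M_1'\cap M_\lambda$ for $\lambda\ge 1$ by the $M_1$-bimodule property, the monotone approximation $\|x-E_{M_\lambda}(x)\|_2\to 0$, and the $\|\cdot\|_2$-closedness of the SOT-closure $P$ inside $M$ all check out. The paper does not actually prove this lemma (it is quoted as a basic fact with a citation), and your conditional-expectation averaging argument is precisely the standard proof of that cited result, so there is nothing to add.
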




Two von Neumann subalgebras $N_1,N_2 \subset (M,\tau)$ are said to be \emph{orthogonal} if $\tau(n_1n_2) = \tau(n_1)\tau(n_2)$ for any choice of $n_i\in N_i$. In this case, we write $N_1 \perp N_2.$ We say an element $x$ is \emph{orthogonal} to a subalgebra $N\subset (M,\tau)$ if $\tau(nx) = 0$ for all $n\in N.$

\begin{lem}[Corollary 2.6 of \cite{popaortho}]\label{popa-ortho-cor2.6}
    Let $B\subset (M,\tau)$ be a von Neumann subalgebra and $u\in \cU(M)$ a unitary. If there is $B_0\subset B$ a diffuse von Neumann subalgebra such that $uB_0u^* \perp B,$ then $u$ is orthogonal to $\cN(B)''.$
\end{lem}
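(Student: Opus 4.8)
The plan is to strip off the passage to $\cN(B)''$ by density, reduce to a statement about a single unitary, and finish with an averaging argument.

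Since $\operatorname{span}\cN(B)$ is a unital $*$-subalgebra of $M$ whose $\|\cdot\|_2$-closure in $L^2(M)$ is $L^2(\cN(B)'')$, and since $x\mapsto\tau(xu)$ is $\|\cdot\|_2$-continuous (it is bounded by $\|x\|_2$), it suffices to show $\tau(vu)=0$ for every normalizing unitary $v\in\cN(B)$. Fix such a $v$ and put $w:=vu$. The first point is that $wB_0w^*\perp B$: for $b_0\in B_0$ and $b\in B$, cycling the trace and using $v^*bv\in B$ gives $\tau(wb_0w^*b)=\tau\big((ub_0u^*)(v^*bv)\big)=\tau(b_0)\tau(v^*bv)=\tau(b_0)\tau(b)$, the middle equality being exactly the hypothesis $uB_0u^*\perp B$; moreover $\tau(wb_0w^*)=\tau(b_0)$. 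So it remains only to prove: if $w\in\cU(M)$ and $B_0\subseteq B$ with $B_0$ diffuse and $wB_0w^*\perp B$, then $\tau(w)=0$.

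For this, use that the diffuse algebra $B_0$ contains a Haar unitary $h$, i.e.\ a unitary with $\tau(h^k)=0$ for all $k\in\Z\setminus\{0\}$. Since $h^k\in B_0$ and $h^{-k}\in B$, orthogonality gives $\tau(wh^kw^*h^{-k})=\tau(h^k)\tau(h^{-k})=0$ for every $k\neq0$. Let $U_k$ denote the unitary on $L^2(M)$ induced by the trace-preserving automorphism $\Ad(h^k)$, so that $k\mapsto U_k$ is a unitary representation of $\Z$; taking the complex conjugate of the previous identity and cycling the trace shows $\langle U_kw,w\rangle=\tau(w^*h^kwh^{-k})=0$ for $k\neq0$, while $\langle U_0w,w\rangle=\|w\|_2^2=1$. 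By von Neumann's mean ergodic theorem, $\frac1{2N+1}\sum_{|k|\le N}U_k\to P$ strongly, where $P$ is the orthogonal projection onto the $U$-fixed vectors. Pairing with $w$, $\|Pw\|_2^2=\langle Pw,w\rangle=\lim_N\frac1{2N+1}=0$, so $Pw=0$; since $\widehat1\in L^2(M)$ is a fixed vector we get $\tau(w)=\langle w,\widehat1\rangle=\langle w,P\widehat1\rangle=\langle Pw,\widehat1\rangle=0$, as desired.

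The only step I expect to be non-formal is this conversion of ``$\tau(wh^kw^*h^{-k})=0$ for all $k\neq0$'' into ``$\tau(w)=0$''. Expanding $w=\tau(w)1+w_0$ naively yields only $\tau(w_0h^kw_0^*h^{-k})=-|\tau(w)|^2$, and the crude Cauchy--Schwarz bound on the left-hand side is far too weak to conclude. The real content is that $\big(\langle U_kw,w\rangle\big)_{k\in\Z}$ is a positive-definite sequence, so the vanishing above says precisely that its Herglotz spectral measure is normalized Haar measure on the circle and hence carries no atom at $1$; the mean ergodic theorem is the device that extracts $\tau(w)=0$ from this. The density reduction and the verification of $wB_0w^*\perp B$ are routine.
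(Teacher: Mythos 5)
Your argument is correct. Note, however, that the paper itself does not prove this lemma at all: it is quoted verbatim as Corollary 2.6 of Popa's orthogonality paper \cite{popaortho}, so there is no in-paper proof to compare against; what you have done is supply a self-contained derivation of the cited result. Each step checks out: the reduction to showing $\tau(vu)=0$ for a single normalizing unitary $v\in\cN(B)$ is legitimate because $\tau(\cdot\,u)$ is $\|\cdot\|_2$-continuous and the span of $\cN(B)$ is $\|\cdot\|_2$-dense in $\cN(B)''$ (Kaplansky); the verification that $w=vu$ satisfies $wB_0w^*\perp B$ uses exactly that $v^*bv\in B$; and the passage from $\tau(wh^kw^*h^{-k})=0$ ($k\neq 0$) to $\tau(w)=0$ via the unitaries $U_k$ induced by $\Ad(h^k)$ and the mean ergodic theorem is sound, since $\|Pw\|_2^2=\langle Pw,w\rangle=\lim_N\frac{1}{2N+1}=0$ while $\widehat{1}$ is $U$-fixed. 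Your route is essentially the classical one behind Popa's result: reduce to normalizers and exploit that powers of a Haar unitary in the diffuse algebra $B_0$ are "weakly mixing" against $B$; Popa's original argument packages this via conditional expectations and weak limits of $h^k$ rather than the ergodic average of the positive-definite sequence $(\langle U_kw,w\rangle)_k$, but the mechanism is the same. The only thing your write-up buys beyond the citation is self-containedness, at the cost of length; if included, it could be compressed by observing directly that $h^k\to 0$ weakly and hence the Cesàro/weak-limit step is immediate.
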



    

    

In this paragraph we follow Section 3 of \cite{Popa93}. Let $(M_i,\tau_i)$ be tracial von Neumann algebras, $i=1,2$ with a common von Neumann subalgebra $B.$ Let $E_i:M_i\to B$ denote the trace-preserving conditional expectation. The \emph{amalgamated free product} $M_1 *_B M_2$ is a tracial von Neumann algebra which is SOT-densely spanned by elements of the form $x=b\in B$ and $x = x_{1,i_1}\cdots x_{k,i_k}$ where $x_{j,i_j}\in M_{i_j}$, $E_{i_j}(x_{j,i_j}) = 0$ (in other words, $x_{j,i_j}\in M_i\ominus B$), and $i_1\neq \ldots \neq i_k$. We call such elements reduced words. If $x\not\in B$ is a reduced word, then $E_B(x) = 0$, and in particular, $\tau(x)=0.$ We note that the subspaces $B$ and $(M_{i_1}\ominus B)\cdots(M_{i_k}\ominus B)$ are all orthogonal for different tuples $(i_1,\ldots,i_k)$ such that $i_1\neq \ldots \neq i_k$. In the case $B = \C1$ we write $M_1 * M_2$ and call it the free product of $M_1$ with $M_2.$ 

For a sequence $(M_n,\tau_n)_{n\in \Z}$ of tracial von Neumann algebras with common von Neumann subalgebra $B,$ we recall that the amalgamated free product is associative, so that $(M_1 *_B M_2) *_B M_3 \cong M_1 *_B (M_2 *_B M_3)$, so we write simply $M_1*_B M_2*_B M_3.$ We define $*^{n\in \Z}_B M_n$ to be the inductive limit of the algebras $M_{-N} *_B M_{-N+1} *_B \cdots *_B M_N$ as $N\to\infty.$ If all of the $M_n,$ $n\in\Z$ are isomorphic to $(M,\tau)$, then there is a natural permutation automorphism $\sigma \in \Aut (*_B^{n\in\Z} M_n)$ which fixes $B$ and takes $M_n$ to $M_{n+1}$. The group this automorphism generates is isomorphic to $\Z$, and we define the \emph{amalgamated free wreath product} $M \wr_B^* \Z$ as equal to the crossed product $*_B^{n\in\Z} M_n \rtimes_\sigma \Z$. We denote $M\wr^* \Z = M \wr^*_\C \Z$. We note too that there is a natural isomorphism $M \wr_B^* \Z \cong M *_B (B\overline{\otimes}L\Z)$ (see, e.g., Section 3 of \cite{CartanAFP}).

\begin{defn}
    A subalgebra $B$ of tracial von Neumann algebra $(M,\tau)$ is said to be relatively rigid (or the inclusion has relative Property (T)) if either of the following conditions hold:
    \begin{enumerate}
        \item For all $\ee>0,$ there is $F\subset M$ finite and $\delta>0$ such that for all unital, tracial, completely positive maps $\phi:M\to M$ satisfying $\max_{x\in F}\|\phi(x)-x\|_2<\delta,$ we have $\|\phi(b)-b\|_2< \ee$ for all $b\in (B)_1$ (the operator norm unit ball of $B$).
        \item For all $\ee>0$ there is $F\subset M$ finite and $\delta>0$ such that for all $M$-$M$ bimodules $\cH$ and tracial vectors $\xi \in \cH$ satisfying $\max_{x\in F}\|x\xi - \xi x\| < \delta,$ there is $\eta\in\cH$ such that $\eta$ is $B$-central (i.e., $\eta b = b\eta$ for all $b\in B$) and $\|\xi-\eta\| < \ee.$
    \end{enumerate}
    We say $M$ is rigid (or has Property (T)) if $M\subset M$ is relatively rigid.
\end{defn}

We record a minor technical result which be used in the proof of Theorem \ref{main-(T)}. The proof is nearly identical to the proof of Proposition 14.2.4 in \cite{anantharaman-popa}.

\begin{lem}\label{stupid-T-lemma}
    Suppose $(M,\tau)$ is a tracial von Neumann algebra and $B\subset M$ is a subalgebra with Property (T). Then for every $\ee>0,$ there exist a finite set $F\subset B$ and a $\delta > 0$ such that whenever $\phi:M \to M$ is a tracial, unital, completely positive map such that $\max_{x\in F}\|\phi(x)-x\|_2 < \delta,$ then $\|\phi(b)-b\|_2 < \ee$ for all $b\in (B)_1$.
\end{lem}

\begin{proof}
    Let $\ee>0.$ Set $\ee' = \ee/2$. Since $B$ has Property (T), we can find $F\subset B$ finite and $\delta>0$ such that for all $B$-$B$ bimodules $\cH$ and tracial vectors $\xi\in\cH$ such that $\max_{x\in F}\|x\xi - \xi x\| < \delta,$ there is a $B$-central vector $\eta$ such that $\|\eta-\xi\| < \ee'.$ 

    Now let $\phi:M\to M$ be a tracial, unital, completely positive map such that $\max_{x\in F}\|\phi(x)-x\|-2 < \delta'$, where $2\delta'\max_{x\in F}\|x\|_2 = \delta^2$. Let $(\cH,\xi)$ be the pointed $M$-$M$ bimodule associated to $\phi$. Then $\|x\xi - \xi x\|^2 \leq 2\|\phi(x)-x\|_2\|x\|_2 < \delta^2$. Since $\cH$ is an $M$-$M$ bimodule, it is in particular a $B$-$B$ bimodule, and so there is a $B$-central vector $\eta \in \cH$ such that $\|\eta-\xi\| < \ee'.$ Then for $b\in (B)_1,$ we have $\|\phi(b)-b\|_2 \leq \|b\xi-\xi b\| \leq \|b(\xi-\eta) - (\xi-\eta)b\| \leq 2\ee' = \ee.$
\end{proof}

We omit the definitions of Haagerup's property but it can be found in Chapter 16 of \cite{anantharaman-popa}. We record the following facts about Haagerup's property and Property (T).

A group $\Gamma$ has Property (T) if and only if $L\Gamma$ has Property (T). In particular, $L(SL_3(\Z))$ has Property (T). Furthermore, if $M$ has property (T) then so does $pMp$ for any projection $p\in M$ (Chapter 14 of \cite{anantharaman-popa}). Furthermore, a \twoone factor $M$ has Property (T) if and only if whenever $N$ is a tracial von Neumann algebra such that $M\subset N,$ we have $M'\cap N^\cU = (M'\cap N)^\cU$ for a free ultrafilter $\cU$ on $\N$ \cite{tan2023spectral}. The free group factors $L(\F_n)$ all have Haagerup's property (Chapter 16 of \cite{anantharaman-popa}). If $M$ has Haagerup's property then so does $pMp$, and a free product of \twoone factors with Haagerup's property again has Haagerup's property (Corollary 5 of \cite{Boca_1993}). A diffuse tracial von Neumann algebra with Property (T) cannot embed into a tracial von Neumann algebra with Haagerup's property (\cite{ConnesJones}, Proposition 16.2.3 of \cite{anantharaman-popa}).


\begin{thm}[\cite{popa2006strong}]
\label{thm-popa-fundamental}
Let $P,Q$ be von Neumann subalgebras of a tracial von Neumann algebra $(M,\tau).$ Then the following are equivalent:

\begin{enumerate}
    \item[(a)] There is no net $(u_i)$ of unitary elements in $P$ such that for every $x,y\in M,$ $\lim_i \|E_Q(x^*u_iy)\|_2=0$.
    \item[(b)] There exists an integer $n\geq 1$, a projection $q\in M_n(Q)$, a nonzero partial isometry $v\in M_{1,n}(M)$ and a normal unital homomorphism $\theta\colon P\to qM_n(Q)q$ such that $v^*v\leq q$ and $xv = v\theta(x)$ for all $x\in P$.
\end{enumerate}
\end{thm}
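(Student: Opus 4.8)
My plan is to run the standard proof of Popa's intertwining-by-bimodules criterion through the \emph{basic construction} $\langle M, e_Q\rangle$ for the inclusion $Q \subseteq M$, equipped with its canonical faithful semifinite trace $\Tr$ determined by $\Tr(x e_Q y) = \tau(xy)$ for $x, y \in M$. Throughout I would use the two standard identities $e_Q z e_Q = E_Q(z) e_Q$ and $\|E_Q(z)\|_2^2 = \Tr(e_Q z e_Q z^*)$ for $z \in M$. (In practice one simply cites \cite{popa2006strong, anantharaman-popa} for this; I sketch the argument anyway.)

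For (b)$\Rightarrow$(a): suppose (b) holds with data $(n, q, v, \theta)$, and suppose toward a contradiction that there is a net $(u_i) \subseteq \cU(P)$ with $\|E_Q(x u_i y)\|_2 \to 0$ for all $x, y \in M$. Applying this to the finitely many matrix entries of $v$ gives $\|E_{M_n(Q)}(v^* u_i v)\|_2 \to 0$. But $u_i v = v \theta(u_i)$ forces $v^* u_i v = (v^*v)\theta(u_i)$, which lies in $M_n(Q)$ since $v^*v$ is a projection $\le q$ and $\theta(u_i) \in q M_n(Q) q$; hence $E_{M_n(Q)}(v^* u_i v) = (v^*v)\theta(u_i)$, and as $\theta$ is unital one computes $\|(v^*v)\theta(u_i)\|_2^2 = (\Tr_n \otimes \tau)(v^*v)$, a strictly positive constant because $v \neq 0$ — a contradiction.

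For (a)$\Rightarrow$(b): assume (a). Its hypothesis is equivalent to the following standard reformulation: there exist a finite self-adjoint set $F \subseteq M$ and $\eps_0 > 0$ such that $\sum_{x, y \in F} \|E_Q(x u y)\|_2^2 \ge \eps_0^2$ for every $u \in \cU(P)$. Put $D := \sum_{x \in F} x e_Q x^* \in \langle M, e_Q\rangle_+$; then $\Tr(D) = \sum_{x \in F}\tau(x^*x) < \infty$ and $\Tr(D^2) < \infty$, and a direct trace computation with the identities above yields $\sum_{x,y \in F}\|E_Q(xuy)\|_2^2 = \Tr\big((u^*Du)D\big)$, so that $\Tr\big((u^*Du)D\big) \ge \eps_0^2$ for all $u \in \cU(P)$. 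Now let $\mathcal{K}$ be the closure in $L^2(\langle M, e_Q\rangle, \Tr)$ of $\operatorname{conv}\{u^*Du : u \in \cU(P)\}$ in the weak topology; it is a bounded weakly compact convex set, so it has a unique element $a$ of minimal $\|\cdot\|_{2,\Tr}$-norm. Since each $\Ad(u)$, $u \in \cU(P)$, is an $L^2$-isometry permuting the generators of $\mathcal{K}$ and hence preserving $\mathcal{K}$, uniqueness forces $u a u^* = a$, i.e.\ $a$ commutes with $P$; moreover $a \ge 0$ (the positive cone is weakly closed) and, since $D \in L^2(\Tr)$ makes $\xi \mapsto \Tr(\xi D)$ weakly continuous, $\Tr(aD) \ge \eps_0^2 > 0$, so $a \neq 0$. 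Then for $\delta > 0$ small enough the spectral projection $p := \mathbf{1}_{[\delta, \infty)}(a)$ is a nonzero projection in $P' \cap \langle M, e_Q\rangle$ with $\Tr(p) \le \delta^{-2}\Tr(a^2) < \infty$.

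The remaining step — which I expect to be the main obstacle — is to convert this nonzero $P$-central finite-trace projection $p$ into the intertwining data of (b). Since $\Tr(p) < \infty$ and $e_Q$ has central support $1$ in $\langle M, e_Q\rangle$, $p$ is Murray--von Neumann equivalent to a subprojection of $\mathbf{1}_n \otimes e_Q$ in $M_n(\langle M, e_Q\rangle)$ for $n$ large, yielding a partial isometry $w \in M_{1,n}(\langle M, e_Q\rangle)$ with $w w^* = p$ and $w^*w \le \mathbf{1}_n \otimes e_Q$; "rounding" $w$ against $e_Q$ (using $e_Q \langle M, e_Q\rangle e_Q = Q e_Q \cong Q$) produces $v \in M_{1,n}(M)$ with $v \neq 0$, $q := v^*v \in M_n(Q)$, and a normal unital $*$-homomorphism $\theta \colon P \to q M_n(Q) q$ with $xv = v\theta(x)$ for all $x \in P$ — which is exactly (b). This dictionary between finite-trace relative-commutant projections in the basic construction and honest intertwiners is now standard (see \cite{anantharaman-popa, popa2006strong}), but it is the one place where genuine structure theory of $\langle M, e_Q\rangle$ and a rounding argument are needed; everything before it is either a bare trace identity ((b)$\Rightarrow$(a) and the formula for $\sum \|E_Q(xuy)\|_2^2$) or a soft Hilbert-space compactness argument.
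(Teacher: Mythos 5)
This theorem is quoted in the paper from \cite{popa2006strong} without proof, so there is no internal argument to compare against; your sketch is the standard basic-construction proof of Popa's intertwining criterion, and in outline it is the right one. Two points, though. First, in (b)$\Rightarrow$(a) your identity $E_{M_n(Q)}(v^*u_iv)=(v^*v)\theta(u_i)$ is not justified: $v^*v$ is a projection in $M_n(M)$ dominated by $q$, but nothing in (b) places $v^*v$ inside $M_n(Q)$ (it only commutes with $\theta(P)$), so $(v^*v)\theta(u_i)$ need not lie in $M_n(Q)$ and need not equal its own conditional expectation. The fix is the usual Cauchy--Schwarz pairing: since $\theta(u_i)\in M_n(Q)$, one has $(\Tr_n\otimes\tau)(v^*v)=(\Tr_n\otimes\tau)\bigl(\theta(u_i)^*v^*u_iv\bigr)=(\Tr_n\otimes\tau)\bigl(\theta(u_i)^*E_{M_n(Q)}(v^*u_iv)\bigr)\leq \|\theta(u_i)\|_2\,\|E_{M_n(Q)}(v^*u_iv)\|_2$, which gives the uniform positive lower bound and the desired contradiction.

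Second, in (a)$\Rightarrow$(b) everything up to the production of the nonzero finite-trace projection $p\in P'\cap\langle M,e_Q\rangle$ is correct (the reformulation of (a), the trace identity for $D=\sum_{x\in F}xe_Qx^*$, and the minimal-norm/convexity argument all check out), but the final conversion of $p$ into $(n,q,v,\theta)$ --- which you yourself call the main obstacle --- is not actually proved: a Murray--von Neumann equivalence $ww^*=p$, $w^*w\leq 1_n\otimes e_Q$ chosen blindly carries no compatibility with the left $P$-action, and the whole content of this step is to produce a $P$-equivariant identification (e.g.\ by viewing $p$ as giving a $P$-$Q$-bimodule that is finitely generated as a right $Q$-module, choosing a right $Q$-basis to define $\theta$, and taking a polar decomposition to get $v$ with $xv=v\theta(x)$). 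Deferring that step to \cite{popa2006strong} or \cite{anantharaman-popa} is reasonable --- it is exactly what the paper itself does by citing the theorem --- but as a self-contained proof your write-up stops short precisely where the genuine work lies.
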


When one of the above equivalent conditions is satisfied, we write $P\prec_M Q$. The following is well known and  follows immediately from the definition above and \cite{ConnesJones}. 

\begin{prop}\label{intertwine-(H)-(T)}
    If $A,B\subset (M,\tau)$ where $A$ is a von Neumann subalgebra with Haagerup's property, and $B$ is a diffuse von Neumann subalgebra with Property (T), then $B\not\prec_M A.$
\end{prop}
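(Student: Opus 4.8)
The plan is to argue by contradiction, leveraging the rigidity of property (T) against the flexibility (deformation) inherent in Haagerup's property. Suppose $B \prec_M A$ with $B$ diffuse property (T) and $A$ having Haagerup's property. By Theorem \ref{thm-popa-fundamental}(b), there exist $n \geq 1$, a projection $q \in M_n(A)$, a nonzero partial isometry $v \in M_{1,n}(M)$, and a normal unital $*$-homomorphism $\theta \colon B \to qM_n(A)q$ with $v^*v \leq q$ and $xv = v\theta(x)$ for all $x \in B$. The first step is to observe that $qM_n(A)q$ inherits Haagerup's property: since $A$ has Haagerup's property, so does the amplification $M_n(A) = M_n(\mathbb{C}) \mathbin{\bar{\otimes}} A$, and hence so does any compression $q M_n(A) q$ by a projection (this is the standard hereditary behavior of Haagerup's property recorded in the preliminaries, via Chapter 16 of \cite{anantharaman-popa}).

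Next I would analyze the image $\theta(B)$. Set $p = v^*v \leq q$ and replace $q$ by $p$ after observing $\theta(x)p = p\theta(x)$ is forced — more carefully, the relation $xv = v\theta(x)$ gives $v v^* x v = v v^* v \theta(x) = v p \theta(x)$, and one checks that $\theta(B) p$ is a $*$-subalgebra of $p M_n(A) p$; alternatively one just notes $\theta(B) \subseteq q M_n(A) q$ is already a von Neumann subalgebra with property (T). Indeed, $B$ has property (T), so the quotient $\theta(B)$, being a von Neumann algebra that is the image of $B$ under a normal unital $*$-homomorphism, also has property (T) — this is a standard fact (property (T) passes to quotients of tracial von Neumann algebras; one can cite Chapter 14 of \cite{anantharaman-popa}). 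The key subtlety is that $\theta(B)$ must be \emph{diffuse}: since $B$ is diffuse and $\theta$ is a unital trace-preserving (or at least trace-scaling, compatible-trace) $*$-homomorphism onto its image, the image contains a diffuse abelian subalgebra (the image of a diffuse abelian subalgebra of $B$), hence $\theta(B)$ is diffuse.

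Having produced a diffuse von Neumann algebra $\theta(B)$ with property (T) sitting inside $qM_n(A)q$, which has Haagerup's property, I would invoke directly the stated fact from the preliminaries: a diffuse tracial von Neumann algebra with property (T) cannot embed into a tracial von Neumann algebra with Haagerup's property (\cite{ConnesJones}, Proposition 16.2.3 of \cite{anantharaman-popa}). This is the desired contradiction, completing the proof that $B \not\prec_M A$.

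I expect the main obstacle — really the only point requiring care — to be the bookkeeping around the amplification and compression: verifying that $\theta(B)$ is genuinely a diffuse tracial von Neumann algebra with property (T) (rather than, say, something with atoms where the obstruction from \cite{ConnesJones} would not apply), and that the ambient algebra $qM_n(A)q$ genuinely has Haagerup's property with respect to its natural trace. Both are routine given the preliminaries, but the diffuseness of $\theta(B)$ is the one place a careless argument could slip; the cleanest route is to take a Haar unitary $u \in B$ (exists since $B$ is diffuse), note $\theta(u)$ is a unitary in $\theta(B)$, and argue that $\{\theta(u)\}''$ is diffuse because $\tau(\theta(u)^k) = \tau(u^k) = 0$ for all $k \neq 0$ (using that $\theta$ preserves the trace up to the scaling on the compression), so $\theta(B) \supseteq \{\theta(u)\}''$ is diffuse.
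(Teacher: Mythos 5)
Your argument is exactly the paper's intended one: the paper offers no proof beyond saying the statement ``follows immediately from the definition above and \cite{ConnesJones}'', i.e.\ from condition (b) of Theorem \ref{thm-popa-fundamental} combined with the fact that a diffuse property (T) tracial von Neumann algebra cannot embed into one with Haagerup's property, which is precisely what you carry out. One small repair: the homomorphism $\theta$ in Theorem \ref{thm-popa-fundamental}(b) need not be trace-preserving (even up to a scaling), so do not argue diffuseness of $\theta(B)$ via traces of powers of $\theta(u)$; instead use normality — the kernel of $\theta$ is $(1-z)B$ for a central projection $z\in \cZ(B)$, so $\theta(B)\cong zB$, which is diffuse and has property (T) since these pass to corners, and this sits inside $qM_n(A)q$, which has Haagerup's property, giving the desired contradiction.
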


\begin{thm}[Theorem 1.1 of \cite{IPP}]\label{IPP}
    Let $B\subset M_1,M_2$ be a common von Neumann subalgebra of two finite von Neumann algebras $M_1$ and $M_2$. Let $M = M_1 *_B M_2$ and $Q\subset M_1$ a von Neumann subalgebra. If $Q\not\prec_M B$ then $Q'\cap M \subset M_1.$
\end{thm}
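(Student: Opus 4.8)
The plan is to show directly that every $x\in Q'\cap M$ lies in $M_1$, by analysing $x$ inside $L^2(M)$ through the reduced-word decomposition of the amalgamated free product. Since $E_{M_1}$ is an $M_1$-bimodule map and $Q\subseteq M_1$, the element $E_{M_1}(x)$ still commutes with $Q$, so $E_{M_1}(x)\in Q'\cap M_1\subseteq M_1$; it therefore suffices to prove that $x_0:=x-E_{M_1}(x)$ vanishes. Here $x_0\in Q'\cap M$ and $\widehat{x_0}\in L^2(M)\ominus L^2(M_1)$; since every reduced word that does not already lie in $L^2(M_1)$ contains at least one letter from $M_2\ominus B$, the vector $\widehat{x_0}$ is a $\|\cdot\|_2$-limit of finite sums of words in normal form $a_0z_1a_1\cdots z_ka_k$ with $k\geq 1$, $z_j\in M_2\ominus B$, interior $a_j\in M_1\ominus B$, and $a_0,a_k\in M_1$.

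Next I would use the hypothesis $Q\not\prec_M B$: by Theorem~\ref{thm-popa-fundamental} there is a net $(u_i)\subseteq\cU(Q)$ with $\lim_i\|E_B(a^*u_ib)\|_2=0$ for all $a,b\in M$. Since $x_0$ commutes with $Q$ and conjugation by the unitary $u_i\in M_1$ commutes with $E_{M_1}$, one has $u_ix_0u_i^*=x_0$, hence $\|x_0\|_2^2=\langle u_ix_0u_i^*,x_0\rangle$ for every $i$. The aim is to show the right-hand side tends to $0$; approximating $x_0$ in $\|\cdot\|_2$ by a finite sum $\xi=\sum_l c_lw_l$ of normal-form reduced words as above, it suffices to show that $\langle u_iw u_i^*,w'\rangle\to0$ for each fixed pair $w,w'$ of such words.

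The technical core is the moment estimate: for normal-form reduced words $w=a_0z_1a_1\cdots z_ka_k$ and $w'=a_0'z_1'a_1'\cdots z_{k'}'a_{k'}'$ and any contraction $u\in M_1$, one has $|\langle uwu^*,w'\rangle|=|\tau(uw^*u^*w')|\leq C_{w,w'}\,\|E_B(a_0^*u^*a_0')\|_2$, with $C_{w,w'}$ depending only on the operator norms of the letters. To see this, write $uw^*u^*w'=u\,a_k^*z_k^*\cdots z_1^*\,(a_0^*u^*a_0')\,z_1'\cdots z_{k'}'a_{k'}'$ and split $a_0^*u^*a_0'=E_B(a_0^*u^*a_0')+\gamma$ with $\gamma\in M_1\ominus B$. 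Cycling $u$ to the other end under the trace, the $\gamma$-contribution becomes a product containing the reduced block $z_1^*\gamma z_1'$; because $\gamma\in M_1\ominus B$ is wedged between two letters of $M_2\ominus B$, no reduction at the two ends of the product can reach or cancel this block, so after full reduction the $\gamma$-contribution is a combination of reduced words of length $\geq 1$ and has trace $0$. In the remaining contribution, cycle the central element $E_B(a_0^*u^*a_0')\in B$ to the front under the trace and apply Cauchy--Schwarz: the trace is bounded by $\|E_B(a_0^*u^*a_0')\|_2$ times the ($\|\cdot\|_\infty$-bounded) $\|\cdot\|_2$-norm of the leftover word, giving the claim. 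Applying this to $\xi$ shows $|\langle u_i\xi u_i^*,\xi\rangle|$ is at most a constant (depending on $\xi$) times $\max\|E_B(b^*u_i^*b')\|_2$ over the finitely many front-letters $b,b'\in M_1$ of $\xi$, which tends to $0$; combined with the approximation this gives $\|x_0\|_2^2\leq O(\varepsilon)$ for every $\varepsilon>0$, so $x_0=0$ and $x=E_{M_1}(x)\in M_1$.

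I expect the only genuine obstacle to be this moment estimate, specifically the two bookkeeping points: that an $M_1\ominus B$ letter sandwiched between two $M_2\ominus B$ letters survives every reduction and forces the trace to vanish, and that the surviving term collapses, via cycling and Cauchy--Schwarz, to the single scalar factor $\|E_B(a_0^*u^*a_0')\|_2$ rather than something more complicated. The other ingredients --- the $E_{M_1}$-reduction, the extraction of the net from Popa's intertwining dichotomy (Theorem~\ref{thm-popa-fundamental}), and the $\varepsilon$-approximation by reduced words --- are routine. One may also note in passing that for $Q,B\subseteq M_1$ one has $Q\prec_M B\iff Q\prec_{M_1}B$, though this equivalence is not needed for the implication proved here.
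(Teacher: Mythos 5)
The paper does not actually prove this statement: it is quoted, with citation, as Theorem 1.1 of \cite{IPP}, so there is no internal proof to compare against. Your argument is a correct, self-contained proof of the relative-commutant statement as quoted, and it is essentially the standard intertwining computation underlying the IPP result: negating condition (a) of Theorem \ref{thm-popa-fundamental} gives a net $(u_i)\subset\cU(Q)$ with $\|E_B(x^*u_iy)\|_2\to 0$ for all $x,y\in M$ (your use of $u_i^*$ in place of $u_i$ is harmless since $\|E_B(a^*u_i^*b)\|_2=\|E_B(b^*u_ia)\|_2$); the reduction to killing $x_0=x-E_{M_1}(x)$ is valid because $E_{M_1}$ is $Q$-equivariant; and the approximation of $x_0$ by normal-form words with at least one $M_2\ominus B$ letter is the standard density fact. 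The two bookkeeping points you flag both check out: in the $\gamma$-term the only possible reductions occur at the two outer $M_1$-letters, and splitting each into its $B$ and $M_1\ominus B$ parts, with the $B$-parts absorbed into the adjacent $M_2\ominus B$ letters (which remain in $M_2\ominus B$ by bimodularity of $E_B$), leaves genuinely reduced words of length at least one containing the block $z_1^*\gamma z_1'$, hence of trace zero; and the $\beta$-term collapses, after cycling and Cauchy--Schwarz, to the bound $C_{w,w'}\|E_B(a_0^*u^*a_0')\|_2$ as claimed. The only difference from the cited source is scope: Theorem 1.1 of \cite{IPP} is stronger (it controls quasi-normalizers and works for $Q\subset pM_1p$), which needs a more elaborate version of the same word combinatorics, while your shorter argument yields exactly the relative-commutant special case stated here, which is all the paper uses.
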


\subsection{Ultraproducts}

Let $\mathcal{U}$ be an ultrafilter on a set $I$ and $(M_i,\tau_i)_{i\in I}$ a collection of tracial von Neumann algebras. We denote by $\prod_{i\to \cU} M_i$ the \emph{tracial ultraproduct},  i.e, the quotient of $\{(x_i)_{i\in I} : \sup_{i\in I} \|x_i\| < \infty\}
$ by the closed ideal $\mathcal{J}\subset\{(x_i)_{i\in I} : \sup_{i\in I} \|x_i\| < \infty\}$  consisting of $x=(x_i)_{i\in I}$ with $\lim\limits_{i\rightarrow\mathcal U}\|x_i\|_2= 0$. If all $M_i$ are isomorphic for $i\in I,$ then $M \cong M_i$ admits a natural diagonal inclusion $\iota:M\to \prod_{i\to \cU} M_i$ given by $\iota(x) = (x)_{i\in I}$. For notational simplicity we identify $M$ with $\iota(M).$ In the case where all $M_i$ are isomorphic to $M$ we write $M^\cU := \prod_{i\to \cU} M_i.$ If $(M,\tau)$ is a \twoone factor, we say that $M$ has \emph{Property Gamma} if $M'\cap M^\cU \neq \C1$.

\begin{prop}[Proposition 5.4.1 of \cite{anantharaman-popa}]
    If $(M_n,\tau_n)$ is a sequence of tracial factors such that $\lim_n \dim M_n = \infty$ and $\cU$ is a free ultrafilter on $\N$ then $\prod_\cU M_n$ is a \twoone factor.    
\end{prop}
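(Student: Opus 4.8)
The plan is to establish three facts in turn: that $M := \prod_{i\to\cU}M_i$ carries a faithful normal tracial state (hence is finite), that its center is trivial, and that it is not of type $\mathrm I$. The first is routine: set $\tau\big((x_i)_{i\to\cU}\big) = \lim_{i\to\cU}\tau_i(x_i)$, which is well defined on the quotient since $|\tau_i(x_i)-\tau_i(y_i)|\le\|x_i-y_i\|_2$, is clearly tracial, and is faithful because $\tau(x^*x) = \lim_{i\to\cU}\|x_i\|_2^2$ is the squared $2$-norm of $x$ in $M$; normality is the usual property of tracial ultraproducts. So the real content lies in the factor property and in ruling out type $\mathrm I_d$.

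For the factor property I would argue by contradiction. If $\cZ(M)\neq\C1$, then a non-scalar self-adjoint central element has, by faithfulness of $\tau$, a spectral projection $p=(p_i)_{i\to\cU}$ which is central with $\tau(p)=t\in(0,1)$. The crucial input is a uniform estimate in finite factors: \emph{if $Q$ is a finite factor and $q\in Q$ is a projection with $\tau(q)\in(0,1)$, then some $v\in\cU(Q)$ satisfies $\|vqv^*-q\|_2^2 = 2\min\{\tau(q),1-\tau(q)\}$.} To see this, when $\tau(q)\le\tfrac12$ one uses $\tau(1-q)\ge\tau(q)$ and finiteness to find $q'\le 1-q$ with $\tau(q')=\tau(q)$; then $q\sim q'$, the implementing partial isometry extends to a unitary $v$ with $vqv^*=q'\perp q$, and $\|vqv^*-q\|_2^2 = \tau(q')+\tau(q) = 2\tau(q)$; the case $\tau(q)>\tfrac12$ follows by applying this to $1-q$ and noting $v(1-q)v^*-(1-q)=q-vqv^*$. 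Now since $\tau_i(p_i)\to_{\cU} t$, for all $i$ in some set in $\cU$ we have $\min\{\tau_i(p_i),1-\tau_i(p_i)\}>\delta$ for a fixed $\delta>0$, so each such $p_i$ is a non-central projection of the factor $M_i$ and the estimate produces $v_i\in\cU(M_i)$ with $\|v_ip_iv_i^*-p_i\|_2^2 > 2\delta$; setting $v_i=1$ otherwise, the unitary $v=(v_i)_{i\to\cU}\in\cU(M)$ then has $\|vpv^*-p\|_2^2\ge 2\delta>0$, contradicting centrality of $p$.

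To rule out type $\mathrm I_d$ I would exploit $\lim_i\dim M_i=\infty$: since a free ultrafilter contains all cofinite sets, $\dim M_i$ is arbitrarily large for $\cU$-most $i$. Fixing $s\in[0,1]$, for $\cU$-most $i$ the factor $M_i$ contains a projection with trace within $1/\dim M_i$ of $s$ (exactly $s$ when $M_i$ is \twoone{}, within $1/d$ when $M_i\cong M_d(\C)$), and assembling these coordinatewise gives a projection of $M$ of trace exactly $s$. Hence $M$ has projections of arbitrarily small positive trace, so it has no minimal projection, and a finite factor with no minimal projection is of type $\mathrm{II}_1$.

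The hard part is the factor property. The point is that a central element of the ultraproduct need not be coordinatewise — even approximately — central, so one cannot simply derive a contradiction coordinate by coordinate against a fixed representing sequence. What makes the argument go through is that the moving lemma above is \emph{uniform}: it yields, in every coordinate, a unitary achieving a fixed quantitative displacement, and these assemble into a single unitary of the ultraproduct that witnesses the non-centrality of $p$. A route through quantitative Dixmier averaging would instead require controlling the number of averaging unitaries uniformly in $i$ (including for low-dimensional matrix coordinates), which this approach avoids.
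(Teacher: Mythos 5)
Your proof is correct; note that the paper does not prove this proposition at all but simply cites Proposition 5.4.1 of \cite{anantharaman-popa}, and your argument follows essentially the same standard strategy as that cited proof (and as the paper's own later Proposition on classifying ultraproducts, which explicitly "follows the proof of Proposition 5.4.1"): lift a nontrivial central projection to coordinatewise projections, move each by a unitary achieving the uniform displacement $2\min\{\tau(q),1-\tau(q)\}$, and assemble these into an ultraproduct unitary contradicting centrality, with the trace-range argument excluding type $\mathrm{I}$. The only step you leave implicit, lifting a projection of the ultraproduct to a sequence of genuine projections, is the standard functional-calculus fact and is fine to take for granted here.
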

The following is well-known to experts. We  include a proof for reader's convenience. Recall that a projection $p\in M$ is called minimal if $pMp = \C p$ and central if $px=xp$ for all $x\in M.$
\begin{prop}\label{ultra classificaiton}
    Let $(M_i,\tau_i)$ be tracial von Neumann algebras. Then $\prod_{i\to \mathcal{U}} M_i$ is a factor if and only if there exists a sequence of either minimal  projections in the center or zero projections $p_i\in M_i$ with $\tau_i(p_i)\to_{\mathcal{U}} 1$. Furthermore, it is a \twoone factor if and only if there exists a sequence of either minimal central projections or zero projections $p_i\in M_i$ with $\tau_i(p_i)\to_{\cU} 1$ and that moreover $\dim(p_iM_ip_i) \to_{\mathcal{U}} \infty$.
\end{prop}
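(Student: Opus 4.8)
The plan is to control the center and the type of $M:=\prod_{i\to\cU}M_i$ directly. For each $i$ I set $d_i:=\sup\{\tau_i(p):p\text{ a minimal projection of }Z(M_i)\}$, with $\sup\emptyset:=0$; since the atom weights of $Z(M_i)$ are summable this supremum is attained by a minimal central projection whenever $d_i>0$, and everything hinges on whether $d_i\to_{\cU}1$. For the ``if'' half of the first equivalence I take minimal central or zero projections $p_i$ with $\tau_i(p_i)\to_{\cU}1$; on a set in $\cU$ each $p_i$ is a nonzero minimal central projection, so $F_i:=p_iM_ip_i$ is a factor. Given $(x_i)_i\in Z(M)$ with $\|x_i\|_\infty\le 1$, I would note $x_ip_i=p_ix_ip_i\in Z(F_i)=\C p_i$, say $x_ip_i=c_ip_i$ with $|c_i|\le 1$; then $(x_i-c_i1)p_i=0$ forces $\|x_i-c_i1\|_2\le 2\sqrt{1-\tau_i(p_i)}\to_{\cU}0$, and as $c:=\lim_{\cU}c_i$ exists by compactness we get $(x_i)_i=c1$, so $M$ is a factor.

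For the converse I prove the contrapositive. If no such sequence exists then $d_i\not\to_{\cU}1$, so there is $\delta\in(0,\tfrac{1}{2})$ with $S:=\{i:d_i\le 1-\delta\}\in\cU$, and for $i\in S$ I will build a projection $q_i\in Z(M_i)$ with $\tau_i(q_i)\in[\tfrac{\delta}{4},1-\tfrac{\delta}{4}]$. Identifying $Z(M_i)\cong L^\infty(X_i,\mu_i)$ and splitting $X_i$ into its atomless and atomic parts: if the atomless part has measure $\ge\tfrac{\delta}{4}$, take a sub-projection there of measure exactly $\tfrac{\delta}{4}$; otherwise the atomic part has measure $>1-\tfrac{\delta}{4}$, each of its atoms has mass $\le d_i\le 1-\delta$, and accumulating atoms from largest to smallest until the partial sum first reaches $\tfrac{\delta}{4}$ gives a projection whose trace lies in $[\tfrac{\delta}{4},\tfrac{\delta}{4}+(1-\delta)]\subseteq[\tfrac{\delta}{4},1-\tfrac{\delta}{4}]$. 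With $q_i:=0$ for $i\notin S$, the element $(q_i)_i$ is a projection in $Z(M)$ with $\tau((q_i)_i)\in[\tfrac{\delta}{4},1-\tfrac{\delta}{4}]$, so $M$ is not a factor.

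For the second equivalence I observe that in either direction $M$ is a factor and there is a sequence $p_i$ of minimal central (or zero) projections with $\tau_i(p_i)\to_{\cU}1$ (given in one direction, produced by the first part in the other); on a set in $\cU$ each $p_i\ne 0$, so $F_i:=p_iM_ip_i$ is a factor, and since $\tau_i(p_i)\to_{\cU}1$ the $*$-homomorphism $\prod_{i\to\cU}F_i\to M$ induced by the inclusions $F_i\subseteq M_i$ is a trace-preserving isomorphism (surjectivity via $(x_i)_i\mapsto(p_ix_ip_i)_i$ and $\|p_ix_ip_i-x_i\|_2\le 2\|x_i\|_\infty\sqrt{1-\tau_i(p_i)}$). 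So it remains to check that $M\cong\prod_{i\to\cU}F_i$ is \twoone if and only if $\dim F_i\to_{\cU}\infty$. If $\dim F_i\not\to_{\cU}\infty$, then along a set in $\cU$ the $F_i$ are finite-dimensional factors $M_{n_i}(\C)$ with $n_i$ bounded, hence equal to a fixed $n$ along a set in $\cU$ (since $\cU$ is an ultrafilter), and $M\cong M_n(\C)$ is of type $\mathrm{I}_n$. If $\dim F_i\to_{\cU}\infty$, then along a set in $\cU$ either all $F_i$ are \twoone factors or all $F_i\cong M_{n_i}(\C)$ with $n_i\to_{\cU}\infty$; in either case, for any nonzero projection $(e_i)_i$ with $t:=\tau((e_i)_i)>0$ I choose $f_i\le e_i$ with $\tau_{F_i}(f_i)=\tfrac{1}{2}\tau_{F_i}(e_i)$ (resp. $\tau_{F_i}(f_i)=\lceil\tfrac{1}{2}\rk(e_i)\rceil/n_i$, legitimate as $\rk(e_i)=n_i\tau_{F_i}(e_i)\to_{\cU}\infty$), so $(f_i)_i\le(e_i)_i$ is a projection with $\tau((f_i)_i)=t/2\in(0,t)$; thus $M$ has no minimal projection, and a finite factor with no minimal projection must be a \twoone factor.

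Apart from routine ultraproduct bookkeeping, I expect the delicate step to be the construction (second paragraph) of the central projection $q_i$ of trace bounded away from $0$ and $1$, where one must balance the atomless and atomic parts of $Z(M_i)$ against the constraint $d_i\le 1-\delta$ on individual atoms. The \twoone refinement is then a short type-classification argument resting on the identification $M\cong\prod_{i\to\cU}F_i$.
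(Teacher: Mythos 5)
Your ``if'' direction of the first equivalence contains a genuine gap: from $(x_i)_i\in Z\bigl(\prod_{i\to\cU}M_i\bigr)$ you conclude that $x_ip_i=p_ix_ip_i\in Z(F_i)=\C p_i$ for each $i$, but centrality in the ultraproduct is only an asymptotic condition on the representatives (it says $\lim_{i\to\cU}\sup_{\|y\|\le 1}\|[x_i,y]\|_2=0$), and a representative of a central element need not be central in any single $M_i$. For instance, if all $M_i=F_i$ are copies of a \twoone factor and $x_i$ are non-scalar elements with $\|x_i-\tau(x_i)1\|_2\to 0$, then $(x_i)_i$ is a scalar in the ultraproduct while no $x_ip_i$ lies in $Z(F_i)$; so the identity $x_ip_i=c_ip_i$, on which your $2\sqrt{1-\tau_i(p_i)}$ estimate rests, simply fails. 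The missing ingredient is a quantitative averaging (Dixmier-type) estimate, valid uniformly in all finite von Neumann algebras, such as $\|x-E_{Z(N)}(x)\|_2\le\sup_{u\in\cU(N)}\|[x,u]\|_2$: applied in each factor $F_i$ (choose $v_i\in\cU(F_i)$ nearly attaining the supremum for $p_ix_ip_i$, set $u_i=v_i+(1-p_i)\in\cU(M_i)$, and contradict centrality of $(x_i)_i$ against the unitary $(u_i)_i$), it yields $\lim_\cU\|x_i-\tau_{F_i}(p_ix_ip_i)1\|_2=0$ and hence factoriality. This is precisely the argument of Proposition 5.4.1 of Anantharaman--Popa, which the paper invokes by reference (``consider $x=(x_ip_i)$ and follow the proof of Proposition 5.4.1''). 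Note the gap also propagates to your ``furthermore'' backward direction, since your identification $M\cong\prod_{i\to\cU}F_i$ only helps once you know an ultraproduct of factors is a factor, which is the same point.

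The rest of your proposal is correct and runs close to the paper's proof. Your contrapositive for the converse (introducing $d_i$ and, when $d_i\not\to_\cU 1$, building central projections $q_i$ with $\tau_i(q_i)\in[\delta/4,1-\delta/4]$ by balancing the atomless and atomic parts of $Z(M_i)$) is essentially the paper's argument and in fact supplies the detail the paper only asserts, namely the existence of central projections of intermediate trace when there is no dominant central atom; the paper instead phrases this via $\tau_i(p_i)(1-\tau_i(p_i))\to_\cU 0$ and the set $I_0$ of indices admitting a central atom of trace $>1/3$. Your treatment of the \twoone refinement (reduction to $\prod_{i\to\cU}F_i$, the bounded-dimension case collapsing to a single matrix algebra along $\cU$, and the halving-of-projections argument excluding minimal projections when $\dim F_i\to_\cU\infty$) is a correct and somewhat more detailed route than the paper's, which argues the forward direction by contradiction via $\dim(p_iM_ip_i)\to_\cU N$ and cites Proposition 5.4.1 again for the backward direction. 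Once the averaging estimate is inserted where indicated, your proof goes through.
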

 \begin{proof}
   If there exists a sequence of either minimal central projections or zero projections $p_i\in M_i$ with $\tau_i(p_i)\to_{\mathcal{U}} 1$, consider $ x = (x_i) = (x_ip_i) \in \prod_{i\to \mathcal{U}} M_i$ and follow the proof of Proposition 5.4.1 of \cite{anantharaman-popa}, we have that $\prod_{i\to \mathcal{U}} M_i$ is a factor. The same argument proves the backward direction of the furthermore part. 
   
   Conversely, if $\prod_{i\to \mathcal{U}} M_i$ is a factor, then for any $p \in \cP(\cZ(\prod_{i\to \mathcal{U}} M_i))$, $p = 0$ or $1$. Then for all $p_i \in \cP(\cZ(M_i)) $, $\tau_i(p_i)(1-\tau_i(p_i))\to_{\mathcal{U}} 0$. Let $I_0 = \{i: \textrm{ there exists a minimal projection } p_i \in \cP(\cZ(M_i)) \textrm{ with } \tau_i(p_i) > 1/3\}$. Then for $i \notin I_0$, there exists $p_i \in \cP(\cZ(M_i)) $ with $1/3 \le \tau_i(p_i) < 2/3$ and $\tau_i(p_i)(1-\tau_i(p_i)) \ge 2/9$. Therefore $I_0 \in \mathcal{U}$. Take $p_i $ to be a minimal projection in $\cZ(M_i)$ with $\tau_i(p_i) > 1/3 $ for $i \in I_0$ and $p_i = 0$ otherwise. Then by $\tau_i(p_i)(1-\tau_i(p_i))\to_{\mathcal{U}} 0$, for any small enough $\varepsilon > 0$, $I_{\varepsilon}:= \{i: \tau_i(p_i)>1- \varepsilon\}\supseteq I_0 \cap \{i: \tau_i(p_i)(1-\tau_i(p_i))< \frac{\varepsilon}{3}\}$ belongs to $\mathcal{U}$. This means the sequence of either minimal central projections or zero projections $p_i\in M_i$ satisfies $\tau_i(p_i)\to_{\mathcal{U}} 1$.

   Finally, for the forward direction of the furthermore part, assume to the contrary that any sequence of either minimal central projections or zero projections $p_i\in M_i$ with $\tau(p_i) \to 1$ satisfies $\dim(p_iM_ip_i) \not\to_{\mathcal{U}} \infty$. Since $\prod_{i\to \mathcal{U}} M_i$ is a factor, by what has been proved we may fix a sequence of either minimal central projections or zero projections $p_i\in M_i$ with $\tau(p_i) \to 1$. Then $\dim(p_iM_ip_i) \to_{\mathcal{U}} N$ for some natural number $N$, and therefore $J = \{i: \dim(p_iM_ip_i) = N\} \in \mathcal{U}$. But then $p_iM_ip_i$, being a factor, is necessarily $\M_{\sqrt{N}}(\C)$. Thus, $\prod_{i\to \mathcal{U}} M_i = \prod_{i\to \mathcal{U}} p_iM_ip_i = \M_{\sqrt{N}}(\C)$, a contradiction.
 \end{proof}
 

\begin{defn}\label{cofinal}
    An ultrafilter $\mathcal U$ on a set $S$ is called \emph{countably incomplete} if there exists a sequence $(A_n)_{n\in\N}$ of sets in $\mathcal U$ such that $\cap_{n\geq 1} A_n = \emptyset.$ Otherwise, $\mathcal U$ is called \emph{countably complete}. 
\end{defn}

The following fact, for the case where the algebra involved is separable, is contained in Lemma 2.3(2) of \cite{BCI15}. However, since we will apply the ultrapower construction to algebras with density characters up to the continuum, we need a slightly stronger result.

\begin{lem}\label{countably-complete}
    If $(M,\tau)$ is a tracial von Neumann algebra with density character below the first uncountable measurable cardinal (in particular, if $M$ has density character less than or equal to the continuum), and $\cU$ is countably complete then the diagonal embedding $\Delta_M: M \to M^{\cU}$ is a trace-preserving $\ast$-isomorphism.
\end{lem}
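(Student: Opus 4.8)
The plan is to show that a countably complete ultrafilter behaves, for the purposes of taking an ultrapower of a not-too-large tracial von Neumann algebra, exactly like a principal ultrafilter. First I would recall the standard measure-theoretic fact: if $\cU$ is a countably complete ultrafilter on $S$, then $\cU$ is in fact $\kappa$-complete for every cardinal $\kappa$ strictly below the least measurable cardinal; equivalently, for any partition of $S$ into fewer than (the first measurable) $\kappa$ pieces, exactly one piece lies in $\cU$. In particular, since the density character of $M$ is below the first uncountable measurable cardinal, this level of completeness is available for index sets of size up to that density character.

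Next I would use this to see that bounded $\cU$-sequences are essentially constant. Fix $x = (x_i)_{i \in S} \in M^{\cU}$ with $\sup_i \|x_i\|_\infty \le 1$. Let $D$ be a $\|\cdot\|_2$-dense subset of the unit ball of $M$ of size equal to the density character of $M$ (using that the unit ball with $\|\cdot\|_2$ has the same density character as $M$). For each $n \in \N$, partition $S$ into the sets $\{i : x_i \in B(d, 1/n)\}$ indexed by a choice of $d \in D$ (more precisely, a disjointification of the cover by $1/n$-balls centered at points of $D$); since $|D|$ is below the first measurable cardinal, $\kappa$-completeness gives a unique $d_n \in D$ with $A_n := \{i : \|x_i - d_n\|_2 < 1/n\} \in \cU$. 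The sequence $(d_n)$ is $\|\cdot\|_2$-Cauchy (intersecting $A_n$ and $A_m$, both nonempty since they lie in $\cU$, shows $\|d_n - d_m\|_2 < 1/n + 1/m$), so it converges in $\|\cdot\|_2$ to some $a$ in the (closed) unit ball of $M$. Then $\lim_{i \to \cU}\|x_i - a\|_2 = 0$ via the $A_n$, i.e. $x = \Delta_M(a)$ in $M^{\cU}$.

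This shows $\Delta_M$ is surjective on unit balls, hence surjective; and $\Delta_M$ is always an injective trace-preserving $\ast$-homomorphism (the diagonal embedding is trace-preserving essentially by definition, and a trace-preserving $\ast$-homomorphism between tracial von Neumann algebras is automatically isometric for $\|\cdot\|_2$ and injective). Therefore $\Delta_M$ is a trace-preserving $\ast$-isomorphism.

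The main obstacle is purely set-theoretic bookkeeping: making sure the completeness of $\cU$ is strong enough for the index set one actually uses. The partition in the previous paragraph is indexed by $D$, whose cardinality is the density character of $M$; countable completeness alone only rules out $\aleph_1$-sized partitions failing, so one genuinely needs the standard ultrafilter fact that a countably complete ultrafilter is $\kappa$-complete for all $\kappa$ below the first uncountable measurable cardinal. Granting that (and the hypothesis that the density character of $M$ lies below that cardinal), every step is routine; the continuum lies below the first measurable cardinal in $\mathsf{ZFC}$, which is why the parenthetical remark in the statement holds unconditionally.
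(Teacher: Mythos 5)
Your proof is correct and takes essentially the same route as the paper: both rely on the fact that a countably complete ultrafilter is $\kappa$-complete for all $\kappa$ below the first uncountable measurable cardinal, and then run the ``$\cU$-sequences are essentially constant'' argument over a $\|\cdot\|_2$-dense set of size the density character (the paper delegates this step to Lemma 2.3(2) of \cite{BCI15}, which you simply write out). The parenthetical case is also handled the same way, via the standard fact that the first measurable cardinal is inaccessible and hence exceeds $2^{\aleph_0}$.
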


\begin{proof}
    By Proposition 4.2.7 of \cite{CK90}, $\cU$ is $\alpha$-complete where $\alpha$ is an uncountable measurable cardinal; i.e., if $\{A_i\}_{i \in I} \subset \cU$ is a collection of sets with $|I| < \alpha$, then $\cap_i A_i \in \cU$. Since $M$ has density character below the first uncountable measurable cardinal, there is a $\|\cdot\|_2$-dense subset $\{z_j\}_{j \in J}$ with $|J| < \alpha$. The proof then proceeds exactly the same as the proof of Lemma 2.3(2) of \cite{BCI15}.

    For the in particular part, we note that by Theorem 4.2.14 of \cite{CK90}, the first uncountable measurable cardinal $\alpha$ is inaccessible, so in particular $\beta < \alpha$ implies $2^\beta < \alpha$. Since $\alpha$ is uncountable, $\aleph_0 < \alpha$, so $2^{\aleph_0} < \alpha$.
\end{proof}

\begin{thm}[Corollary 3.8 of \cite{ScottSri2019ultraproduct}]\label{scott-sri-thm}
    Let $(M_n)_n$ be a sequence of \twoone factors. Suppose that $N$ is a separable von Neumann algebra that embeds into $R^\cU$, where $\cU$ is an ultrafilter. Then $N$ is amenable if and only if for any two embeddings $\pi,\rho:N\to M = \prod_\cU M_n$ there is a unitary $u\in M$ such that $\pi(x) = u\rho(x)u^*$ for all $x\in N.$
\end{thm}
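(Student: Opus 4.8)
I want to prove both directions of the equivalence in Theorem \ref{scott-sri-thm} (the statement reproduced as Corollary 3.8 of \cite{ScottSri2019ultraproduct}). Throughout, $M = \prod_\cU M_n$ is a \twoone factor (or at least a finite von Neumann algebra), $N$ is separable and embeds into $R^\cU$, and $\cU$ is countably incomplete on a set $I$; I may assume without loss that $N \subset R^\cU$.

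\textbf{The easy direction: amenability implies uniqueness of embeddings up to unitary conjugacy.} Suppose $N$ is amenable and $\pi, \rho : N \to M$ are two embeddings. The plan is to invoke the uniqueness part of Connes' theorem in the ultraproduct setting: any two embeddings of a separable amenable (hyperfinite) von Neumann algebra into a \twoone factor ultraproduct are unitarily conjugate. Concretely, I would first reduce to the hyperfinite \twoone factor $R$ itself: since $N$ is amenable and separable, it sits inside a copy of $R$ on corners, or one uses an approximate local structure argument. The cleanest route is: given the two embeddings, consider the two maps $\pi \oplus \rho$ and use a diagonal/local argument — for each finite-dimensional subalgebra $F \subset N$ and each $\eps$, by approximating $N$ by finite-dimensional subalgebras one builds, levelwise in the ultraproduct $\prod_{i \to \cU} M_i$, partial isometries intertwining the two copies of $F$ up to $\eps$, then takes an ultralimit. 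The key input is that finite-dimensional subalgebras of a \twoone factor with the same trace data are unitarily conjugate, combined with the semicontinuity of the trace and the countable incompleteness of $\cU$ (which lets one diagonalize over a countable exhausting chain $F_1 \subset F_2 \subset \cdots$ of $N$ and a sequence $\eps_k \to 0$). This produces a unitary $u \in M$ with $\pi(x) = u\rho(x)u^*$ for all $x \in N$.

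\textbf{The hard direction: uniqueness of embeddings up to unitary conjugacy implies amenability.} This is the substantive content and I expect it to be the main obstacle. I would argue by contraposition: assuming $N$ is \emph{not} amenable, I must produce two embeddings $\pi, \rho : N \to M$ that are not unitarily conjugate. The strategy is to exploit non-intertwining. Since $N$ is non-amenable and embeds in $R^\cU$, $N$ contains (a corner generating, or after cutting) a non-amenable piece; the goal is to build one embedding $\pi$ whose image is ``spread out'' across the ultraproduct coordinates and another $\rho$ sitting inside a single coordinate factor $M_{n_0}$ (or a subproduct), arranged so that $\pi(N) \not\prec_M \rho(N)$. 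One natural mechanism: use the fact that $M = \prod_\cU M_n$ contains many mutually free copies of pieces, and a non-amenable $N$ admits embeddings into a free product situation where Popa's intertwining fails (as in the classical fact that two copies of $L(\F_2)$ inside $L(\F_2) * L(\F_2)$ do not intertwine). The challenge is doing this \emph{inside} an arbitrary ultraproduct of \twoone factors $M_n$; here one uses that $R^\cU$ — hence any $M^\cU$ for $M$ containing $R$ — has such free-independence structure available, and that embeddability of $N$ into $R^\cU$ lets one transport the construction. If $\pi$ and $\rho$ were unitarily conjugate, then $\pi(N) \prec_M \rho(N)$ trivially (by the unitary), contradicting the non-intertwining arrangement.

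\textbf{Summary and the crux.} The forward direction is a fairly standard Connes-uniqueness-in-ultraproducts argument; the reverse direction is where the real work lies, and the crux is: (i) identifying, for an arbitrary non-amenable separable $N \hookrightarrow R^\cU$, two concrete embeddings into $\prod_\cU M_n$, and (ii) proving they fail to intertwine via Popa's criterion (Theorem \ref{thm-popa-fundamental}), i.e. exhibiting a net of unitaries in $\pi(N)$ witnessing $E_{\rho(N)}(\cdot) \to 0$. I expect the correct tool is to use that $M_n$ being \twoone factors forces enough room for free-type constructions in the ultraproduct, together with the characterization of amenability (Connes) as the failure of this kind of rigidity. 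The countable incompleteness of $\cU$ is essential in both directions — for the diagonalization in the easy direction and to ensure the ultraproduct is genuinely a \twoone factor with the requisite structure in the hard direction.
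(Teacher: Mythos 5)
This statement is quoted in the paper as Corollary 3.8 of \cite{ScottSri2019ultraproduct}; the paper itself gives no proof, so your attempt has to be measured against the argument in that reference. Your ``easy'' direction is fine in outline and matches the standard argument: Connes' theorem gives hyperfiniteness of the separable amenable $N$, any two unital finite-dimensional subalgebras of the \twoone factor $\prod_\cU M_n$ with the same trace data are unitarily conjugate, and countable incompleteness of $\cU$ supplies the saturation needed to glue the approximate intertwiners along an exhausting chain $F_1\subset F_2\subset\cdots$ into a single unitary.

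The hard direction, however, has a genuine gap, and it is exactly where the theorem's content lies. First, your plan calls for an embedding $\rho$ of $N$ into a single coordinate factor $M_{n_0}$ (or a subproduct), but nothing in the hypotheses provides this: one only knows $N\hookrightarrow R^\cU$, and it is not known that an arbitrary \twoone factor contains, say, $L(\F_2)$; the only structure available levelwise is a copy of $R$ inside each $M_n$. Second, even granting two candidate embeddings, you give no mechanism for verifying $\pi(N)\not\prec_M\rho(N)$ via Theorem \ref{thm-popa-fundamental}: the appeal to ``free-independence structure available in $R^\cU$'' transported into $\prod_\cU M_n$ is not an argument, and the analogy with two free copies of $L(\F_2)$ in a free product does not transfer to the ultraproduct setting without substantial work. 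More fundamentally, converting non-amenability of $N$ into the existence of two non-conjugate embeddings is precisely Jung's theorem and its extension to ultraproducts of arbitrary \twoone factors; the known proofs (Jung's tubularity/covering-number argument, and the analysis of the convex structure on the space of embeddings used in \cite{ScottSri2019ultraproduct}) involve machinery that your sketch replaces with the statement that you ``expect the correct tool'' to exist. As written, the crux of the reverse implication is asserted rather than proved.
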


\begin{defn}
    If $\mathcal U$ is an ultrafilter on $S$ and $\mathcal V$ is an ultrafilter on $T$ then $\mathcal U \rtimes \mathcal V$ is defined by $X \in\cU\rtimes\cV $ if and only if $\{s : \{t: (s,t) \in X\} \in \cV\}\in \cU$. $\cU\rtimes \cV$ is an ultrafilter on $S\times T$.
\end{defn}

The following lemma appears as Theorem 2.1 in \cite{capraro2012product}.

\begin{lem}
    If $(M,\tau)$ is a tracial von Neumann algebra and $\cU,\cV$ are ultrafilters on sets $S,T$ respectively then $(M^\mathcal U)^ \mathcal V \cong M^{\mathcal U \rtimes \mathcal V}$.
\end{lem}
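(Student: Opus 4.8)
The plan is to realise the isomorphism concretely as the ``reindexing'' of doubly-indexed bounded families, so that the statement reduces to one elementary fact about iterated ultralimits plus a standard remark about representatives in a tracial ultrapower; no new idea is needed. Write $\ell^\infty(S,M):=\{(x_s)_{s\in S}:\sup_s\|x_s\|_\infty<\infty\}$, let $\cJ_\cU\subset\ell^\infty(S,M)$ be the null ideal $\{(x_s):\lim_{s\to\cU}\|x_s\|_2=0\}$, and let $\pi_\cU\colon\ell^\infty(S,M)\to M^\cU$ be the quotient $\ast$-homomorphism; define $\cJ_\cV,\pi_\cV$ and, for $\cW:=\cU\rtimes\cV$ on $S\times T$, also $\cJ_\cW,\pi_\cW$, analogously. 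Given $(x_{s,t})_{(s,t)\in S\times T}\in\ell^\infty(S\times T,M)$, for each fixed $t$ the slice $(x_{s,t})_s$ lies in $\ell^\infty(S,M)$ with $\|\pi_\cU((x_{s,t})_s)\|_\infty\le\sup_s\|x_{s,t}\|_\infty$, so $t\mapsto\pi_\cU((x_{s,t})_s)$ is a bounded family in $M^\cU$; applying $\pi_\cV$ defines
\[
  \Theta\colon\ell^\infty(S\times T,M)\longrightarrow(M^\cU)^\cV,\qquad
  \Theta\big((x_{s,t})\big)=\pi_\cV\!\big(\,(\pi_\cU((x_{s,t})_s))_t\,\big),
\]
which is a unital $\ast$-homomorphism, being a slicewise composition of quotient $\ast$-homomorphisms.

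Next I would identify the trace and the kernel of $\Theta$. Unwinding the ultrapower trace twice gives $\tau_{(M^\cU)^\cV}(\Theta((x_{s,t})))=\lim_{t\to\cV}\lim_{s\to\cU}\tau(x_{s,t})$, and the elementary point is that for any bounded scalar family $(a_{s,t})$ this iterated ultralimit equals $\lim_{(s,t)\to\cW}a_{s,t}$: this is precisely what the definition of $\cW=\cU\rtimes\cV$ encodes, since for $L$ the iterated limit and any $\eps>0$ the set $\{(s,t):|a_{s,t}-L|<\eps\}$ contains a set of the form $\bigcup_{s\in A}\{s\}\times B_s$ with $A\in\cU$ and each $B_s\in\cV$, and all such sets lie in $\cW$. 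Hence $\tau_{(M^\cU)^\cV}\circ\Theta=\tau_{M^\cW}\circ\pi_\cW$, so by faithfulness of the trace, for $a=(x_{s,t})$ we have $\Theta(a)=0\iff\tau_{(M^\cU)^\cV}(\Theta(a^*a))=0\iff\lim_{(s,t)\to\cW}\|x_{s,t}\|_2^2=0\iff a\in\cJ_\cW=\ker\pi_\cW$. Therefore $\Theta$ descends to an injective, trace-preserving $\ast$-homomorphism $\bar\Theta\colon M^{\cU\rtimes\cV}\hookrightarrow(M^\cU)^\cV$.

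It remains to show $\bar\Theta$ is onto. Let $(\eta_t)_{t\in T}$ be a bounded family in $M^\cU$ with $C:=\sup_t\|\eta_t\|_\infty$. Since the operator norm of $M^\cU$ is the quotient norm of $\ell^\infty(S,M)/\cJ_\cU$, each $\eta_t$ has a representative $(x_{s,t})_s\in\ell^\infty(S,M)$ with $\sup_s\|x_{s,t}\|_\infty\le C+1$; then $(x_{s,t})\in\ell^\infty(S\times T,M)$ and $\Theta((x_{s,t}))=\pi_\cV((\eta_t)_t)$. Since every element of $(M^\cU)^\cV$ has this form, $\bar\Theta$ is surjective, hence a (necessarily normal) trace-preserving $\ast$-isomorphism $M^{\cU\rtimes\cV}\cong(M^\cU)^\cV$.

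I do not expect a genuine obstacle; the statement is pure bookkeeping. The only non-formal content --- and so the part I would flag as ``hard'' --- is twofold: first, in the trace identity one must keep straight which of $\cU,\cV$ plays the inner versus the outer role in the iterated ultralimit, so that it matches the definition of $\cU\rtimes\cV$; second, in the surjectivity step one uses the standard fact that an element of a tracial ultrapower admits a bounded representative of almost minimal operator norm, which is exactly what lets the lifts $(x_{s,t})_s$ be chosen uniformly bounded in $t$ and hence assemble into a genuine element of $\ell^\infty(S\times T,M)$.
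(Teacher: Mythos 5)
Your overall strategy (reindexing doubly indexed bounded families, identifying traces, then lifting representatives of uniformly bounded norm for surjectivity) is the standard one; the paper itself offers no proof, citing Theorem 2.1 of \cite{capraro2012product}, so the construction of $\Theta$, the faithfulness-of-trace argument for the kernel, and the surjectivity step are all reasonable things to supply. However, there is a genuine error at exactly the point you flagged as delicate: the trace identity is stated with the wrong orientation relative to the paper's definition of $\cU\rtimes\cV$. With that definition ($X\in\cU\rtimes\cV$ iff $\{s:\{t:(s,t)\in X\}\in\cV\}\in\cU$), one has $\lim_{(s,t)\to\cU\rtimes\cV}a_{s,t}=\lim_{s\to\cU}\lim_{t\to\cV}a_{s,t}$, i.e.\ $\cU$ is the \emph{outer} limit, whereas the trace of $\Theta\big((x_{s,t})\big)$ in $(M^\cU)^\cV$ is the iterated limit with $\cU$ \emph{inner}: $\lim_{t\to\cV}\lim_{s\to\cU}\tau(x_{s,t})$. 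Your justification via sets $\bigcup_{s\in A}\{s\}\times B_s$ with $A\in\cU$, $B_s\in\cV$ is precisely the argument for the former iterated limit, not the latter: the $\eps$-neighborhood of $L=\lim_{t\to\cV}\lim_{s\to\cU}a_{s,t}$ only contains sets of the transposed form $\bigcup_{t\in B}A_t\times\{t\}$ ($B\in\cV$, $A_t\in\cU$), which need not lie in $\cU\rtimes\cV$. Concretely, take $S=T=\N$, $\cU,\cV$ free, and $a_{s,t}=\mathbf{1}_{\{s\geq t\}}$: then $\lim_{t\to\cV}\lim_{s\to\cU}a_{s,t}=1$ while $\lim_{(s,t)\to\cU\rtimes\cV}a_{s,t}=0$. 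Correspondingly, the family $x_{s,t}=\mathbf{1}_{\{s\geq t\}}1_M$ lies in $\cJ_{\cU\rtimes\cV}$ but $\Theta$ sends it to $1$, so $\ker\Theta\neq\cJ_{\cU\rtimes\cV}$ and your $\bar\Theta$ does not descend to (let alone inject from) $M^{\cU\rtimes\cV}$.

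What your argument actually proves, correctly and with no further changes, is $(M^\cU)^\cV\cong M^{\cW}$ where $\cW$ is the Fubini-type product in which $\cV$ governs the outer coordinate, i.e.\ $\cW=\{X\subseteq S\times T:\{t:\{s:(s,t)\in X\}\in\cU\}\in\cV\}$ (equivalently $\cV\rtimes\cU$ on $T\times S$). There is no way to repair the literal statement by a cleverer map: with the paper's definition, $M^{\cU\rtimes\cV}$ is naturally $(M^\cV)^\cU$, and identifying that with $(M^\cU)^\cV$ is not a formal matter. So the honest conclusion is that the lemma as printed carries the same transposition slip relative to its own definition of $\rtimes$, and the correct statement is the transposed one; for the paper's only use of the lemma (in the characterization of e.c.\ factors) this is immaterial, since all that is needed is that $(M^\cU)^\cV$ is an ultrapower of $M$ with respect to \emph{some} ultrafilter on the product index set, compatibly with the diagonal embeddings. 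You should either prove the transposed statement, or transpose the definition of $\cU\rtimes\cV$ at the outset; as written, the kernel identification step fails.
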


\subsection{Model Theory}

We say two tracial von Neumann algebras $(M,\tau_M)$ and $(N,\tau_N)$ are \emph{elementarily equivalent} if there exists an ultrafilter $\cU$ (possibly on an uncountable set) such that $M^\cU\cong N^\cU.$ Note that by Lemma \ref{countably-complete}, we may assume $\cU$ is countably incomplete when the algebras involved have density characters at most continuum. Let $\Delta_M:M\to M^\cU$ denote the diagonal embedding. An embedding of tracial von Neumann algebras $\iota :M \to (N,\tau)$ is said to be an \emph{elementary embedding} if there is an ultrafilter $\cU$ and an isomorphism $\Phi : M^\cU \to N^\cU$ such that $\Phi \circ \Delta_M = \Delta_N \circ \iota.$ When $M\subset N$ is an inclusion of tracial von Neumann algebras such that the inclusion map is an elementary embedding, we write $M\preceq N$ and say $M$ is an \emph{elementary substructure} of $N$.

The following can be found as Theorem 2.3 of \cite{FHS2014b}.

\begin{thm}[Downward Löwenheim-Skolem]\label{DLS}
    Let $(M,\tau)$ be a tracial von Neumann algebra. Let $X\subset M$ be a separable subset.
    Then there exists $M_0\preceq M$ such that $X\subset M_0$ and $M_0$ is a separable tracial von Neumann algebra.
\end{thm}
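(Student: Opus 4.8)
The statement is the continuous-logic Downward Löwenheim–Skolem theorem specialized to the (countable) language of tracial von Neumann algebras, so the plan is to run the classical Skolem-hull argument, taking care that the hull we produce is a genuine von Neumann subalgebra and not merely a $\ast$-subalgebra. I would work in the standard (Farah–Hart–Sherman) formalization, in which a tracial von Neumann algebra is a metric structure whose domains are the operator-norm balls $(M)_n$, whose metric is $\|\cdot\|_2$, whose operations are the $\ast$-algebra operations, and whose basic predicate is $\tau$; the key soft input is that every formula $\varphi(\bar{x})$ is uniformly $\|\cdot\|_2$-continuous on each product of domains, with a modulus of continuity depending only on $\varphi$. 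The proof will be via the Tarski–Vaught test: a von Neumann subalgebra $M_0\subseteq M$, equipped with the restricted trace, satisfies $M_0\preceq M$ provided that for every formula $\varphi(x,\bar{y})$, every $n$, and every tuple $\bar{a}$ from $M_0$ one has $\inf_{x\in (M_0)_n}\varphi^{M}(x,\bar{a})=\inf_{x\in (M)_n}\varphi^{M}(x,\bar{a})$; and I would invoke the standard fact (Łoś's theorem together with the continuous Keisler–Shelah theorem) that this notion of elementary substructure agrees with the ultrapower-based one used in the paper.

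Next I would construct $M_0$ by a countable iteration of Skolem closures. Fix a countable set $D_0\subseteq (M)_1$ whose SOT-closure contains $X$. Given a countable $D_k\subseteq M$, for each formula $\varphi(x,\bar{y})$, each $n$, each tuple $\bar{a}$ from $D_k$, and each $m\in\N$, choose an approximate witness $b_{\varphi,n,\bar{a},m}\in (M)_n$ with $\varphi^{M}(b_{\varphi,n,\bar{a},m},\bar{a})\le \inf_{x\in (M)_n}\varphi^{M}(x,\bar{a})+\tfrac1m$ (the infimum need not be attained, which is why approximate witnesses are used). Let $D_{k+1}$ be the union of $D_k$ with all these witnesses, together with the $\Q(i)$-$\ast$-algebra they generate. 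Since the language is countable and each $D_k$ is countable, each $D_{k+1}$ is countable. Put $D_\infty=\bigcup_k D_k$ and $M_0=D_\infty''\subseteq M$. Then $M_0$ is a von Neumann subalgebra; it contains $X$ because it is SOT-closed and contains $D_0$; and it is separable, since the countable $\Q(i)$-$\ast$-algebra generated by $D_\infty$ is $\sigma$-weakly dense in $M_0$, so by the Kaplansky density theorem its elements of norm $\le n$ are $\|\cdot\|_2$-dense in $(M_0)_n$ for every $n$, giving a countable $\|\cdot\|_2$-dense subset of each ball.

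It remains to verify the Tarski–Vaught criterion for $M_0$. For tuples $\bar{a}$ from $D_\infty$ it holds by construction: the witnesses in $D_\infty$ give $\inf_{x\in D_\infty\cap (M)_n}\varphi^{M}(x,\bar{a})\le \inf_{x\in (M)_n}\varphi^{M}(x,\bar{a})$, while the reverse inequalities are immediate from $D_\infty\cap (M)_n\subseteq (M_0)_n\subseteq (M)_n$. For an arbitrary tuple $\bar{a}$ from $M_0$, I would approximate $\bar{a}$ in $\|\cdot\|_2$ by tuples $\bar{a}^{(j)}$ drawn from the $\ast$-algebra generated by $D_\infty$ and lying in the same operator-norm balls (Kaplansky density), and then pass to the limit using the uniform continuity of the formula: both $\bar{a}\mapsto \inf_{x\in(M_0)_n}\varphi^{M}(x,\bar{a})$ and $\bar{a}\mapsto \inf_{x\in(M)_n}\varphi^{M}(x,\bar{a})$ have the modulus of continuity of $\varphi$ in the parameter variables, so equality on the dense set $D_\infty$ propagates to all of $M_0$. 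This yields $M_0\preceq M$, completing the proof.

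The step I expect to require the most care is precisely this passage from $D_\infty$ to its von Neumann-algebraic closure $M_0=D_\infty''$ in the Tarski–Vaught verification: it hinges on the uniform $\|\cdot\|_2$-continuity of formulas on bounded sets (so that the Tarski–Vaught defect is a continuous function of the parameter tuple) and on Kaplansky density to approximate parameters from $M_0$ by elements of $D_\infty$ within the correct operator-norm ball. One should also check, although it is routine, that each ball $(M_0)_n$ is $\|\cdot\|_2$-complete, so that $M_0$ really is a substructure in the metric-structure sense. Everything else is the standard Löwenheim–Skolem bookkeeping.
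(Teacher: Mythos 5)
The paper does not actually prove this statement; it is quoted directly from Theorem 2.3 of \cite{FHS2014b}, so there is no internal proof to compare against. Your argument is correct and is essentially the standard proof that the citation points to: a countable Skolem-hull iteration with approximate witnesses, verification of the continuous Tarski--Vaught criterion, and the von Neumann-algebraic step of passing from the countable $\Q(i)$-$*$-algebra $D_\infty$ to $M_0=D_\infty''$ via Kaplansky density and the uniform $\|\cdot\|_2$-continuity of formulas on operator-norm balls. Two points deserve explicit mention. First, the paper defines $M_0\preceq M$ semantically, through an isomorphism of ultrapowers intertwining the diagonal embeddings, so your reduction to the syntactic Tarski--Vaught test relies on the continuous Keisler--Shelah theorem in its version for elementary maps; this is available in the literature (and is implicit in the paper's use of the citation), but it is the one genuinely non-elementary external input and should be cited rather than treated as folklore. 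Second, in the last step the equality of infima is established for parameter tuples from $D_\infty$, while Kaplansky density produces approximants in the \emph{complex} $*$-algebra generated by $D_\infty$; since you closed $D_\infty$ under the $\Q(i)$-$*$-operations, a further routine approximation of complex coefficients by rational ones, with a slight rescaling to remain in the required operator-norm ball, closes this small gap. With those two remarks your proof is complete and matches the standard argument.
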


Assuming the Continuum Hypothesis, all results in this paper which apply to ultrapowers of \twoone factors also apply to ultraproducts of matrices.

\begin{prop}
    Assume the Continuum Hypothesis. Then every matrix ultraproduct $\cQ = \prod_\cU M_{k(n)}(\C)$ such that $\lim_\cU k(n) = \infty$ and $\cU$ is a free ultrafilter on $\N$ is isomorphic to an ultrapower of a \twoone factor.
\end{prop}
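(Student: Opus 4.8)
The plan is to realize the matrix ultraproduct $\cQ = \prod_\cU M_{k(n)}(\C)$ as an iterated ultrapower of the hyperfinite II$_1$ factor $R$, and then invoke the earlier lemma that $(R^\cV)^\cW \cong R^{\cV \rtimes \cW}$, together with Lemma \ref{countably-complete}, to rewrite this as a single ultrapower. First I would observe that since $\lim_\cU k(n) = \infty$, Proposition \ref{ultra classificaiton} (or directly Proposition 5.4.1 of \cite{anantharaman-popa}) gives that $\cQ$ is a II$_1$ factor. Moreover $\cQ$ has density character $2^{\aleph_0}$: it is generated by the countably many matrix units of bounded operators, and the continuum bound on density character follows because each coordinate algebra is finite-dimensional and $\cU$ is a free ultrafilter on $\N$. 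Under the Continuum Hypothesis, a II$_1$ factor of density character $2^{\aleph_0} = \aleph_1$ that is countably saturated in the continuous model theory sense is determined up to isomorphism by its theory; the point is that $\cQ$ and an appropriate ultrapower of $R$ are both saturated of the same density character, hence isomorphic if they are elementarily equivalent.

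The key step is therefore to show $\cQ$ is elementarily equivalent to $R$. Each $M_{k(n)}(\C)$ is a finite-dimensional (hence hyperfinite) tracial factor, and finite-dimensional matrix algebras approximate $R$ in the sense that $R$ embeds into $\prod_\cU M_{k(n)}(\C)$ and, conversely, matrix algebras satisfy all the universal/existential consequences of the theory of $R$; in fact the Keisler–Shelah theorem (in its continuous version) tells us that two structures are elementarily equivalent iff they have isomorphic ultrapowers, so it suffices to produce an ultrafilter $\cW$ with $\cQ^\cW \cong R^\cW$. Here I would use that $\cQ$ embeds in $R^\cU$ (diagonal matrices, or just the standard embedding of a matrix ultraproduct into $R^\cU$) and $R$ embeds in $\cQ$, together with the fact that $R$ is the unique separable amenable II$_1$ factor: by a back-and-forth / intertwining argument at the level of a sufficiently saturated ultrapower, amenability of $R$ forces these embeddings to be elementary, so $R \preceq \cQ$ and $\cQ \preceq R^\cU$; then $\cQ^\cV \cong R^\cV$ for a suitable $\cV$, i.e. $\cQ$ is elementarily equivalent to $R$.

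Once $\cQ \equiv R$, take an ultrafilter $\cV$ with $\cQ^\cV \cong R^\cV$. By the Continuum Hypothesis, $\cQ$ is $\aleph_1$-saturated (as a countably incomplete ultraproduct indexed by $\N$) of density character $\aleph_1$, and similarly $R^\cW$ for any free ultrafilter $\cW$ on $\N$ is $\aleph_1$-saturated of density character $\aleph_1$; since they are elementarily equivalent, the uniqueness of saturated models of a given density character yields $\cQ \cong R^\cW$. Alternatively, and more in the spirit of the surrounding text: $\cQ \cong \cQ^\cW$ for $\cW$ a free ultrafilter on $\N$ (again by CH and saturation, or by noting $\cQ$ is its own ultrapower up to isomorphism), and similarly one can arrange $\cQ^\cW \cong R^{\cV \rtimes \cW}$ using $\cQ^\cV \cong R^\cV$ and the product-ultrafilter lemma $(R^\cV)^\cW \cong R^{\cV \rtimes \cW}$; this exhibits $\cQ$ as an ultrapower of the II$_1$ factor $R$.

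The main obstacle I anticipate is the passage from "$R$ embeds in $\cQ$ and $\cQ$ embeds in $R^\cU$" to genuine elementary equivalence $\cQ \equiv R$: one must be careful that the embeddings can be taken elementary, which is exactly where amenability of $R$ and a saturation/homogeneity argument (or an appeal to the known fact that $R$ has a model-complete theory, or that every embedding of $R$ into its ultrapower is elementary) is essential. The rest — checking $\cQ$ is a II$_1$ factor, has density character exactly $2^{\aleph_0}$, and invoking CH-driven saturation together with the product-ultrafilter lemma — is routine given the results already recorded in this section.
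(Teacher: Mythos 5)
The overall machinery you invoke at the end (countable saturation of ultraproducts indexed by $\N$, density character $2^{\aleph_0}=\aleph_1$ under CH, and uniqueness of saturated models of a given density character) is exactly the right toolkit, but your proof has a genuine gap at its central step: the claim that $\cQ\equiv R$. Mutual embeddability ($R\hookrightarrow\cQ$ and $\cQ\hookrightarrow R^\cU$) does not yield elementary equivalence, and the mechanism you propose does not repair this: uniqueness of $R$ as the separable amenable \twoone factor gives unitary conjugacy of embeddings of $R$, which says nothing about those embeddings being elementary (at best $R$ is existentially closed, so its embeddings are existential, not elementary), and a back-and-forth argument requires elementary partial maps to begin with, so it cannot be bootstrapped from plain trace-preserving embeddings. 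Likewise ``$\cQ\preceq R^\cU$'' is not something you can arrange; it is equivalent to the very statement $\cQ\equiv R$ you are trying to prove. In fact, whether a matrix ultraproduct is elementarily equivalent to $R$ (equivalently, under CH, whether $\prod_\cU M_{k(n)}(\C)\cong R^\cU$) is a well-known open question in the model theory of \twoone factors, so no routine argument of this kind can close the gap. Keisler--Shelah also does not help: it converts elementary equivalence into isomorphic ultrapowers, it does not produce elementary equivalence from embeddings.

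The statement only asks that $\cQ$ be an ultrapower of \emph{some} \twoone factor, and the paper's proof exploits exactly this freedom: apply Downward L\"owenheim--Skolem (Theorem \ref{DLS}) to get a separable tracial factor $N\preceq\cQ$, so $N\equiv\cQ$ for free; then both $\cQ$ and $N^\cU$ are countably saturated (Proposition 4.11 of \cite{FHS2014a}) and under CH have density character $\aleph_1$, hence are saturated, hence isomorphic by uniqueness of saturated models (Proposition 4.13 of \cite{FHS2014a}); finally $N$ is a \twoone factor because $\cQ$ is not elementarily equivalent to any matrix algebra. If you replace your elementary-equivalence-with-$R$ step by this Löwenheim--Skolem step, the rest of your argument (which you already have) goes through; as written, however, the proof does not establish the proposition, and its stated route would in particular settle an open problem.
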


\begin{proof}
    By Proposition 4.11 of \cite{FHS2014a}, $\cQ$ is countably saturated (i.e., $\aleph_1$-saturated). Since the density character of $\cQ$ is $2^{\aleph_0} = \aleph_1,$ $\cQ$ is saturated. By Downward Löwenheim-Skolem, there is a separable tracial factor $N$ such that $N$ is elementarily equivalent to $\cQ$. Again by Proposition 4.11 of \cite{FHS2014a}, $N^\cU$ is countably saturated and therefore saturated. Since $N^\cU$ and $\cQ$ have the same density character and are both saturated, they are isomorphic by Proposition 4.13 of \cite{FHS2014a}. It is clear $N$ is a \twoone factor since $\cQ$ is not elementarily equivalent to a matrix algebra.
\end{proof}

\begin{defn}
    A tracial von Neumann algebra $M$ is \emph{existentially closed} (e.c.) in the class of tracial von Neumann algebras if for every tracial von Neumann algebra $N \supset M$, there is an ultrafilter $\cU$ such that $M\subset N\subset M^\cU$, where the inclusion of $M$ is the diagonal embedding of $M$ in $M^\cU.$
\end{defn}

We note that e.c. tracial von Neumann algebras are automatically \twoone factors \cite{goldbring2013theory}.

\begin{lem}
    A separable tracial von Neumann algebra $M$ is e.c. if and only if for every separable tracial von Neumann algebra factor $N \supset M$, there is an ultrafilter $\cU$ such that $N$ embeds in $M^\cU$ and the embedding commutes with the diagonal embedding of $M$ in $M^\cU.$
\end{lem}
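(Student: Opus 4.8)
The plan is to prove the equivalence by a routine Downward Löwenheim–Skolem argument, reducing the general (non-separable) defining condition for existential closedness to its separable counterpart. The forward direction is immediate: if $M$ is e.c., then in particular the embedding-into-an-ultrapower property holds for all $N \supset M$, hence for all \emph{separable} such $N$. So the only content is the reverse implication.

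For the reverse direction, assume $M$ is separable and that the separable version of the property holds. Let $N \supset M$ be an arbitrary (possibly non-separable) \twoone factor; I must produce an ultrafilter $\cU$ and an embedding $N \hookrightarrow M^\cU$ commuting with the diagonal embedding $\Delta_M$. The idea is to exhaust $N$ by separable elementary substructures containing $M$. Concretely: enumerate a $\|\cdot\|_2$-dense subset of $N$ and build an increasing chain $(N_\alpha)_{\alpha<\kappa}$ of separable subfactors, where $\kappa$ is the density character of $N$, with $M \subset N_0$, each $N_\alpha \subset N_{\alpha+1}$, $N_\alpha \preceq N$ (using Theorem \ref{DLS}, Downward Löwenheim–Skolem, applied to a separable set containing $M$, $N_\beta$ for $\beta<\alpha$, and the next batch of the dense set), and $N_\lambda = \overline{\bigcup_{\alpha<\lambda}N_\alpha}^{SOT}$ at limits. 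Since each $N_\alpha$ is an elementary substructure of $N$ it is automatically a \twoone factor, and $N = \overline{\bigcup_{\alpha<\kappa}N_\alpha}^{SOT}$. Now apply the separable hypothesis to $M \subset N_0$: there is an ultrafilter $\cU_0$ and an embedding $N_0 \hookrightarrow M^{\cU_0}$ commuting with $\Delta_M$. Then feed the inclusion $N_0 \subset N_1$ into the hypothesis relative to the new base algebra $M^{\cU_0}$ — but one must check $M^{\cU_0}$ is again a separable \twoone factor, which it is \emph{not} in general, so this is where care is needed.

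The honest approach avoids iterating the hypothesis algebra-by-algebra and instead transfers elementarity along the chain. Since $N_\alpha \preceq N$ for all $\alpha$, and $M \subset N_\alpha$, and $N_\alpha$ is separable, the separable e.c. hypothesis gives an ultrafilter and an embedding $N_\alpha \hookrightarrow M^{\cU}$ commuting with $\Delta_M$ \emph{for each individual $\alpha$}; by a standard ultrafilter-combining trick (using $\cU \rtimes \cV$ constructions and the iterated ultrapower isomorphism $(M^\cU)^\cV \cong M^{\cU\rtimes\cV}$ recorded earlier, together with saturation/chain arguments as in the proof of the countable-case lemma) one can arrange a single ultrafilter that works simultaneously along the whole chain and is compatible under inclusion, so that the embeddings $N_\alpha \hookrightarrow M^\cU$ cohere into an embedding $\bigcup_\alpha N_\alpha \to M^\cU$; this extends by SOT-continuity to $N = \overline{\bigcup_\alpha N_\alpha}^{SOT} \hookrightarrow M^\cU$, still commuting with $\Delta_M$. (Alternatively, and more cleanly: by Downward Löwenheim–Skolem applied inside a large ultrapower, one reduces the whole statement to separable pieces at once; this is the standard model-theoretic fact that existential closedness is witnessed on separable substructures.)

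The main obstacle is precisely the bookkeeping in the last paragraph: one must ensure the ultrafilters and embeddings produced at successive stages of the transfinite chain are mutually compatible — i.e., that the embedding of $N_{\alpha+1}$ restricts to that of $N_\alpha$, and that at limit stages the union embedding still lands in a single fixed $M^\cU$ — rather than requiring an ever-growing tower of ultrapowers $(\cdots(M^{\cU_0})^{\cU_1}\cdots)$. This is handled by working with a fixed sufficiently saturated ultrapower $M^\cU$ (choosing $\cU$ so that $M^\cU$ is $\kappa^+$-saturated, which is possible for a suitable ultrafilter on a set of size $\le 2^\kappa$) and using saturation to extend partial embeddings one step at a time, exactly as in the classical proof that a structure is e.c. iff it is e.c. with respect to substructures of size $\le |M| + \aleph_0$. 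The remaining verifications — that each $N_\alpha$ is a \twoone factor, that SOT-limits of the cohering embeddings exist and commute with $\Delta_M$, and that Lemma \ref{countably-complete} lets us take $\cU$ countably incomplete so the diagonal embeddings behave — are all routine.
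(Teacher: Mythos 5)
Your reduction has a genuine gap at the step you yourself flag as ``bookkeeping.'' The separable hypothesis only provides, for each $\alpha$, \emph{some} embedding $N_\alpha \hookrightarrow M^{\cU_\alpha}$ compatible with the diagonal copy of $M$; it says nothing about the position of the image relative to a previously chosen embedding $f_\alpha\colon N_\alpha \to M^{\cU}$. To extend $f_\alpha$ to $N_{\alpha+1}$ inside a fixed $\kappa^+$-saturated $M^{\cU}$, saturation requires that the quantifier-free type of $N_{\alpha+1}$ \emph{over the image $f_\alpha(N_\alpha)$} be finitely approximately satisfiable in $M^{\cU}$ — and the hypothesis gives control only over parameters in $M$, not over parameters in $f_\alpha(N_\alpha)$, whose placement in $M^{\cU}$ is arbitrary. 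The classical fact you invoke (e.c.\ is witnessed on small extensions) is not proved by coherently extending embeddings of a chain of elementary substructures; it is proved by exploiting that each finite piece of existential data about $N$ involves only finitely many witnesses and parameters from $M$, so that (after a single Downward L\"owenheim--Skolem application to capture those witnesses in a separable $N_0 \preceq N$ containing $M$) satisfiability transfers to $M$ itself, and only then does one run a saturation argument for the whole quantifier-free diagram of $N$ over $M$. As written, your successor step would fail; there is also a secondary issue that members of your chain at limit stages of uncountable index need not remain separable.

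For comparison, the paper's proof avoids both the chain and any saturation argument: by Downward L\"owenheim--Skolem (using separability of $M$) choose a single separable $N_0 \preceq N$ with $M \subset N_0$; the hypothesis gives $f\colon N_0 \hookrightarrow M^{\cU}$ over $M$; the definition of $N_0 \preceq N$ used in the paper directly yields $g\colon N \hookrightarrow N_0^{\cV}$ restricting to the diagonal on $N_0$; and composing with $f^{\cV}\colon N_0^{\cV} \hookrightarrow (M^{\cU})^{\cV} \cong M^{\cU \rtimes \cV}$ gives the required embedding of $N$ commuting with the diagonal embedding of $M$. If you want to salvage your route, replace the chain of elementary substructures by the quantifier-free diagram of $N$ over $M$ (parameters in $M$ only), verify its finite approximate satisfiability in $M$ via one DLS application per finite piece together with the hypothesis and \L o\'s's theorem, and then realize it in a sufficiently saturated ultrapower; but the paper's composition trick is both shorter and free of saturation hypotheses.
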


\begin{proof}
    It suffices to assume that for every separable tracial von Neumann algebra $N \supset M$, there is an ultrafilter $\cU$ such that $N$ embeds in $M^\cU$ and the embedding commutes with the diagonal embedding of $M$ in $M^\cU$ and prove that $M$ is e.c., since the other direction is immediate.
    
    Let $M\subset N$ be an inclusion of tracial von Neumann algebras. Then by Downward Löwenheim-Skolem there is an elementary separable substructure $N_0$ of $N$ containing $M.$ By hypothesis, there is an ultrafilter $\cU$ and an inclusion $f:N_0 \hookrightarrow M^\cU$ such that  $f$ restricts to the diagonal embedding $\Delta_{M,\cU}$ on $M$. Since $N_0 \preceq N,$ there is an ultrafilter $\cV$ and an inclusion $g:N\hookrightarrow N_0^\cV$ such that $g$ restricts to the diagonal embedding $\Delta_{N_0,\cV}$ on $N_0$. Lastly, since there is an inclusion $N_0\subset M^\cU$ there is also an induced inclusion $f^\cV:N_0^\cV \hookrightarrow (M^\cU)^\cV = M^{\cU\rtimes\cV}$. The composition of inclusions $M\subset N\subset N_0^\cV \subset M^{\cU\rtimes\cV}$ is then equal to the diagonal embedding $\Delta_{M,\cU\rtimes\cV}$. In other words, the diagram below commutes.

    \[
    \begin{tikzcd}
    M \ar[r, hook] \ar[dr, hook] & N_0 \ar[r, hook] \ar[d,hook,"f"] \ar[dr,hook] & N \ar[d, hook, "g"]\\
    &M^\cU \ar[dr,hook]& N_0^\cV \ar[d,hook,"f^\cV"]  \\
    & &M^{\cU\rtimes\cV} 
    \end{tikzcd}
    \]
\end{proof}

\begin{lem}\label{e.c. lifting}
    If $M$ is a tracial von Neumann algebra such that all separable elementary substructures of $M$ are e.c., then $M$ is e.c.
\end{lem}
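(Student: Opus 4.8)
The plan is to reduce the statement about a general tracial von Neumann algebra $M$ to the separable case via the Downward Löwenheim--Skolem theorem (Theorem \ref{DLS}). Suppose $M \subset N$ is an inclusion of tracial von Neumann algebras; we want to produce an ultrafilter $\cU$ and an embedding $N \hookrightarrow M^{\cU}$ commuting with the diagonal embedding of $M$. The issue is that $N$ may not be separable, so we cannot directly invoke the previous lemma characterizing e.c.\ separability. First I would reduce to the case where $N$ is separable: apply Theorem \ref{DLS} to the separable subset consisting of a generating set of $M$ together with finitely many elements of $N$ that we wish to ``capture'' — but since we need the embedding to be defined on all of $N$, a single application is not enough, and one must run a chain/directed-system argument over all separable elementary substructures of $N$ containing $M$.

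Here is the main structure I would use. Let $M_0$ be a separable elementary substructure of $M$; by hypothesis $M_0$ is e.c. For each separable subset $X \subset N$, use Theorem \ref{DLS} to find a separable $N_X \preceq N$ with $M_0 \cup X \subset N_X$. Then $M_0 \subset N_X$ is an inclusion of separable \twoone factors (or tracial von Neumann algebras — one should be slightly careful if we are not in the factor case, but the e.c.\ notion and the previous lemma are stated for factors, so presumably $M, N$ are \twoone factors here, or the argument is run in that generality), so since $M_0$ is e.c., there is an ultrafilter $\cU_X$ and an embedding $f_X : N_X \hookrightarrow M_0^{\cU_X}$ restricting to $\Delta_{M_0}$ on $M_0$. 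Composing with the inclusion $M_0 \subset M$, which induces $M_0^{\cU_X} \subset M^{\cU_X}$, and using $M \subset M^{\cU_X}$ diagonally, one gets $N_X \hookrightarrow M^{\cU_X}$ agreeing with $\Delta_M$ on $M$ (using $M_0 \preceq M$ to identify the two diagonal embeddings on $M_0$). The key step, and the main obstacle, is to patch these local embeddings on the $N_X$ into a single embedding of all of $N$: the ultrafilters $\cU_X$ depend on $X$, and the $f_X$ need not be compatible.

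To overcome this I would pass to an ultraproduct over the directed set of separable elementary substructures of $N$ containing $M_0$. Concretely, let $\Lambda$ be the set of separable $N_\lambda \preceq N$ with $M_0 \subset N_\lambda$, directed by inclusion, and let $\cV$ be an ultrafilter on $\Lambda$ containing all the up-sets $\{\lambda : N_\lambda \supset N_{\lambda_0}\}$ (a standard construction, since these have the finite intersection property). For each $\lambda$ pick $f_\lambda : N_\lambda \to M^{\cU_\lambda}$ as above; one can even arrange (by replacing $\cU_\lambda$ with a fixed sufficiently saturated ultrafilter, or by taking a common refinement) that all $\cU_\lambda$ equal a single $\cU$, since a separable e.c.\ factor embeds a given separable $N_\lambda$ into \emph{any} sufficiently saturated ultrapower — this uniformization step deserves care and is really the technical heart. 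Given that, define $F : N \to (M^{\cU})^{\cV} = M^{\cU \rtimes \cV}$ by $F(x) = [(\lambda \mapsto f_\lambda(x))]$, where for each $x$ the value $f_\lambda(x)$ is defined for all $\lambda$ in a $\cV$-large set (namely those $N_\lambda \ni x$). One checks $F$ is a trace-preserving $\ast$-homomorphism: traces match because each $f_\lambda$ preserves the trace, and the $\ast$-algebra operations are computed pointwise on a $\cV$-large set. Finally, $F$ restricted to $M$: for $x \in M$, pick any $\lambda$ with $N_\lambda \ni x$ — but $x$ need not lie in $M_0$, so here we need $f_\lambda$ to restrict to $\Delta$ on all of $M \cap N_\lambda$, not just $M_0$; since $M_0 \preceq M$ and $f_\lambda$ is built from the e.c.\ property of $M_0$ one shows the composite embedding agrees with $\Delta_M$ on $M$ by an elementarity/uniqueness argument, using that two embeddings of $M$ into a saturated ultrapower agreeing on a dense (or elementary) subalgebra and both elementary must coincide up to the identification. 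Composing $F$ with the diagonal identification, $F$ commutes with $\Delta_M : M \to M^{\cU \rtimes \cV}$, which exhibits $N \hookrightarrow M^{\cU\rtimes\cV}$ over $M$, so $M$ is e.c. The delicate points to get right in the writeup are (i) the uniformization of the ultrafilters $\cU_\lambda$, (ii) the compatibility of the various diagonal embeddings under $M_0 \preceq M$, and (iii) verifying $F$ really restricts to the diagonal on all of $M$ and not merely on $M_0$; all three are handled by the defining property of elementary embeddings together with Łoś-type pointwise computations, but they are where the proof actually has content.
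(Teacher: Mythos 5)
The core of your argument has a genuine gap at the step you yourself label (iii). You fix a single separable elementary substructure $M_0 \preceq M$ and, for each separable $N_\lambda \preceq N$ containing $M_0$, use the e.c.\ property of $M_0$ to produce $f_\lambda : N_\lambda \to M_0^{\cU_\lambda}$ restricting to the diagonal on $M_0$. But e.c.-ness of $M$ requires the patched embedding $N \to M^{\cU}$ to restrict to the diagonal on \emph{all} of $M$, and nothing in your construction ties $f_\lambda(x)$ to $\Delta_M(x)$ for $x \in (M \cap N_\lambda) \setminus M_0$. Your proposed repair --- that two embeddings into a saturated ultrapower agreeing on a dense or elementary subalgebra must coincide --- is false: $M_0$ is separable, hence far from $\|\cdot\|_2$-dense in the non-separable $M$, and agreement on an elementary substructure gives no uniqueness of extension; the e.c.\ property of $M_0$ yields an embedding over $M_0$ with no control whatsoever over where elements outside $M_0$ are sent. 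So, as written, your map $F$ commutes with the diagonal only on $M_0$, which does not prove that $M$ is e.c.

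The fix is to let the separable elementary substructure of $M$ vary so as to exhaust $M$: index the directed system by pairs $(M_\lambda, N_\lambda)$ of separable elementary substructures $M_\lambda \preceq M$, $N_\lambda \preceq N$ with $M_\lambda \subset N_\lambda$ (such pairs are cofinal by two applications of Theorem \ref{DLS}), apply the hypothesis to each $M_\lambda$, and choose the ultrafilter $\cV$ on the index set so that for every $x \in M$ and $y \in N$ the set $\{\lambda : x \in M_\lambda,\ y \in N_\lambda\}$ is $\cV$-large; then the patched map does restrict to the diagonal on all of $M$. (Your worry (i) about uniformizing the $\cU_\lambda$ is a non-issue: take the target to be $\prod_{\lambda \to \cV} M^{\cU_\lambda}$, which is again an ultrapower of $M$, so no saturation argument is needed.) This corrected argument is essentially the content of what the paper cites instead of proving directly: by Downward L\"owenheim--Skolem, $M$ is an inductive limit of a chain (directed family) of separable elementary substructures, each e.c.\ by hypothesis, and an inductive limit of a chain of e.c.\ tracial von Neumann algebras is e.c.\ (Fact 2.3.2(4) of \cite{AGKE}).
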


\begin{proof}
    This follows from Downward Löwenheim-Skolem and the fact that every inductive limit of a chain of e.c. tracial von Neumann algebras is e.c. (Fact 2.3.2(4) of \cite{AGKE}).
\end{proof}

\section{Proof of main results}\label{construction section}

\subsection{Conjugating subalgebras}\label{hnn-ext}

We follow the notation of Section 3 of \cite{fimavaes} (see also \cite{uedahnn}). Let $(M,\tau)$ be a tracial von Neumann algebra, $A\subset M$ a von Neumann subalgebra, and $\theta:A\to M$ a trace-preserving *-homomorphism. Then there is a tracial von Neumann algebra $P$ which contains $M$ and is generated by $M$ and a single additional Haar unitary $u$ such that $uxu^* = \theta(x)$ for all $x\in A$. The algebra $P$ is called the HNN extension of $M$ with respect to $A$ and $\theta,$ and denoted $\mathrm{HNN}(M,A,\theta).$ Write $A_1 = A$ and $A_{-1} = \theta(A).$ An element $x = x_0u^{\varepsilon_1}\cdots u^{\varepsilon_n}x_n$ with $x_i\in M$ and $\varepsilon_i\in\{-1,1\}$ is said to be reduced if $x_i\in M\ominus A_{\varepsilon_i}$ whenever $\varepsilon_i\neq \varepsilon_{i+1}$. By convention elements of $M\ominus \C1$ are also considered reduced in $P.$ The reduced words SOT-densely span $P.$ Then $P$ satisfies the following universal property:

\begin{prop}[Proposition 3.2 of \cite{fimavaes}]\label{hnn-univ-prop}
    Let $P = \mathrm{HNN}(M,A,\theta)$ be an HNN extension. Assume that $(Q,\tau_Q)$ is any tracial von Neumann algebra, that $\pi:M\to Q$ is a trace-preserving embedding and that $w\in Q$ is a unitary satisfying
    \begin{itemize}
        \item $\pi(\theta(x)) = w\pi(x)w^*$ for all $x\in A,$
        \item for all reduced $x = x_0u^{\varepsilon_1}\cdots u^{\varepsilon_n}x_n \in P,$ we have $\tau_Q(\pi(x_0)w^{\varepsilon_1}\cdots w^{\varepsilon_n}\pi(x_n)) = 0.$
    \end{itemize}
    Then there exists a unique trace-preserving *-homomorphism $\Tilde{\pi}:P\to Q$ extending $\pi$ and satisfying $\Tilde{\pi}(u) = w.$
\end{prop}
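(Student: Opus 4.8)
\emph{Proposal.} The plan is to construct $\tilde\pi$ first at the Hilbert space level, on $L^2(P)$, and then descend to a normal trace-preserving $\ast$-homomorphism into $Q$, using the normal form of the HNN extension on the $P$ side and the trace hypothesis to control the geometry on the $Q$ side. Uniqueness is immediate: since the reduced words SOT-densely span $P$, any normal $\ast$-homomorphism $\tilde\pi$ with $\tilde\pi|_M = \pi$ and $\tilde\pi(u) = w$ must satisfy $\tilde\pi(x_0 u^{\varepsilon_1}\cdots u^{\varepsilon_n}x_n) = \pi(x_0)w^{\varepsilon_1}\cdots w^{\varepsilon_n}\pi(x_n)$, so it is determined on a $\|\cdot\|_2$-dense $\ast$-subalgebra, hence everywhere.

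For existence, recall from the normal form for HNN extensions \cite{fimavaes, uedahnn} that $L^2(P)$ is an orthogonal direct sum $\bigoplus_t \mathcal H_t$ indexed by word types $t = (\varepsilon_1,\dots,\varepsilon_n)$ (including the empty type, with $\mathcal H_\emptyset = L^2(M)$), where each $\mathcal H_t$ is isometrically identified via $\xi_0\otimes\cdots\otimes\xi_n \mapsto \widehat{\xi_0 u^{\varepsilon_1}\cdots u^{\varepsilon_n}\xi_n}$ with a Hilbert tensor product of copies of $L^2(M)$ and of $L^2(M\ominus A_{\varepsilon_i})$; in particular the $\|\cdot\|_2$-norm of a finite sum of reduced words is computed by iterating the conditional expectations $E_{A_{\pm 1}}$. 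Define $\Phi$ on the dense subspace of finite sums of reduced words by $\Phi\big(\widehat{x_0 u^{\varepsilon_1}\cdots u^{\varepsilon_n}x_n}\big) = \widehat{\pi(x_0)w^{\varepsilon_1}\cdots w^{\varepsilon_n}\pi(x_n)}\in L^2(Q)$. The key claim is that $\Phi$ is isometric. For reduced words $X = x_0 u^{\varepsilon_1}\cdots u^{\varepsilon_n}x_n$ and $Y = y_0 u^{\delta_1}\cdots u^{\delta_m}y_m$ one has $\langle\Phi\hat X,\Phi\hat Y\rangle_Q = \tau_Q\big(\pi(y_m)^*w^{-\delta_m}\cdots\pi(y_0)^*\pi(x_0)w^{\varepsilon_1}\cdots w^{\varepsilon_n}\pi(x_n)\big)$, which is $\tau_Q$ of a word in $\pi(M)$ and $w^{\pm 1}$. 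Using $w\pi(a) = \pi(\theta(a))w$ and $w^{-1}\pi(\theta(a)) = \pi(a)w^{-1}$ for $a\in A$ (valid by the first hypothesis, and mirroring exactly the reduction moves in $P$), reduce this word: if some $w$ survives, the reduced result is $\pi(z_0)w^{\gamma_1}\cdots w^{\gamma_k}\pi(z_k)$ coming from a reduced word $z_0 u^{\gamma_1}\cdots u^{\gamma_k}z_k\in P$ with $k\ge 1$, so the second hypothesis gives $\langle\Phi\hat X,\Phi\hat Y\rangle_Q = 0$; complete cancellation of all $w$'s happens precisely when the parallel computation of $\langle\hat X,\hat Y\rangle_P$ in $L^2(P)$ collapses, and in that case the two values coincide because $\pi$ is trace-preserving and intertwines $E_{A}$ with $E_{\pi(A)}$, so all iterated conditional expectations match and $\theta$ is realized by conjugation by $w$ on both sides. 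Hence distinct types go to orthogonal subspaces of $L^2(Q)$ and $\Phi$ is isometric on each $\mathcal H_t$; summing, $\Phi$ extends to an isometry $L^2(P)\to L^2(Q)$ with $\Phi\hat 1_P = \hat 1_Q$.

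To descend, note that by construction (and $\|\cdot\|_2$-density plus boundedness) $\Phi(x\xi) = \pi(x)\Phi(\xi)$ for $x\in M$ and $\Phi(u\xi) = w\Phi(\xi)$ for all $\xi\in L^2(P)$, so with $L_x$ denoting left multiplication on $L^2(P)$ we get $\Phi L_x = L_{\pi(x)}\Phi$ for $x\in M$ and $\Phi L_u = L_w\Phi$. Since $\Phi^*\Phi = 1$, the assignment $x\mapsto \Phi L_x\Phi^*$ is a normal $\ast$-representation of $P$ on $L^2(Q)$; and $\Phi L_x\Phi^* = L_{\pi(x)}e$ for $x\in M$, $\Phi L_u\Phi^* = L_w e$, where $e := \Phi\Phi^*$ is the projection onto $\overline{\Phi(L^2(P))} = \overline{\langle\pi(M),w\rangle\,\hat 1_Q} = L^2(N_0)$ with $N_0 := \langle\pi(M),w\rangle''\subseteq Q$, i.e.\ $e$ is the Jones projection $e_{N_0}$, which commutes with left multiplication by $N_0$. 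It follows that $\Phi L_x\Phi^* = L_{\tilde\pi(x)}e$ for a unique $\tilde\pi(x)\in N_0\subseteq Q$, and $\tilde\pi\colon P\to Q$ is a normal $\ast$-homomorphism with $\tilde\pi|_M = \pi$ and $\tilde\pi(u) = w$. Finally $\tau_Q(\tilde\pi(x)) = \langle\Phi L_x\Phi^*\hat 1_Q,\hat 1_Q\rangle = \langle\Phi(x\hat 1_P),\Phi(\hat 1_P)\rangle = \langle x\hat 1_P,\hat 1_P\rangle = \tau(x)$, so $\tilde\pi$ is trace-preserving.

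I expect the essential obstacle to be the isometry of $\Phi$: one must carefully align the reduction algorithm in $P$ with the reductions available among $\pi(M)$ and $w$ inside $Q$, and verify that whenever the $Q$-side word fails to reduce into $\pi(M)$ it reduces to a genuine reduced word whose $\tau_Q$-value vanishes by hypothesis — this is exactly what forces the inner-product geometry of $L^2(Q)$ relative to $\langle\pi(M),w\rangle$ to replicate that of $L^2(P)$, after which the descent to an algebra map is formal.
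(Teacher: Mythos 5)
This proposition is quoted in the paper from Fima--Vaes (Proposition 3.2 of \cite{fimavaes}); the paper itself gives no proof, so your argument can only be compared with the standard one, and it is in fact a correct reconstruction of it: the hypotheses say precisely that the $\ast$-distribution of $(\pi(M),w)$ in $(Q,\tau_Q)$ coincides with that of $(M,u)$ in $(P,\tau)$, and your $L^2$-level map $\Phi$ together with the descent via $e=\Phi\Phi^*$ is the GNS uniqueness argument made explicit. Two small points deserve slightly more care than your write-up gives them. First, when you reduce the word $\pi(y_m)^*w^{-\delta_m}\cdots\pi(y_0)^*\pi(x_0)w^{\varepsilon_1}\cdots\pi(x_n)$, the outcome is in general a \emph{linear combination} of images of reduced words plus a term in $\pi(M)$ (each reduction step splits a letter as $E_{A_{\pm1}}(x)+(x-E_{A_{\pm1}}(x))$), not a single reduced word; the argument still closes because each reduced-word summand has trace zero by hypothesis, the $\pi(M)$-summand matches the corresponding term on the $P$-side since $\pi$ is trace-preserving and $E_{\pi(A)}\circ\pi=\pi\circ E_A$, and the same bookkeeping shows $\Phi$ is well defined (a vanishing finite combination of reduced words is sent to a vector of the same norm, namely zero) --- so well-definedness and isometry are really the same computation. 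Second, the intertwining $\Phi L_u=L_w\Phi$ is not immediate on reduced words (left multiplication by $u$ can destroy reducedness) and requires the same mirrored-reduction observation; once noted, the passage to $\tilde\pi(x)\in N_0=\langle\pi(M),w\rangle''$ via the Jones projection and the trace computation are fine. With these clarifications your proof is complete and is essentially the argument of \cite{fimavaes}.
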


It is known that the HNN extension described above is isomorphic to a corner of an amalgamated free product (see Proposition 3.1 of \cite{ueda2008remarks}). Our contribution is to show that the HNN extension is a natural unital subalgebra of an amalgamated free product, allowing us to easily apply intertwining results such as found in \cite{IPP}.

\begin{defn}
    
We proceed with our construction by letting $(M, \tau_M)$ be a tracial von Neumann algebra, $A \subseteq M$ be a subalgebra, and $\theta: A \to M$ be a trace-preserving $\ast$-homomorphism. 

Let $A \wr^\ast \Z$ be the free wreath product of $A$ with $\Z$, i.e., $A \wr^\ast \Z = (\ast^{n \in \Z} A_n) \rtimes \Z$ where $A_n$ are copies of $A$ and $\Z$ acts on $\ast^{n \in \Z} A_n$ by permuting $A_n$, i.e., the conjugation by the generator of $\Z$ sends $A_n$ to $A_{n + 1}$. For notational purposes, let $B_1$, $B_2$, and $B_3$ be three copies of $A \wr^\ast \Z$, in which the copies of $A$ are labeled $A^1_n$, $A^2_n$, and $A^3_n$, respectively, and in which the generators of $\Z$ are labeled $t$, $s$, and $r$, respectively.
    
We now let $M_1 = M \ast_A B_1$, where the inclusion of $A$ into $B_1$ sends $A$ to $A^1_0$. Then, let $M_2 = M_1 \ast_{\theta(A)} B_2$, where the inclusion of $\theta(A)$ into $B_2$ first sends $\theta(A)$ to $A$ via $\theta^{-1}$, then sends $A$ to $A^2_0$.

Now, we observe that in $M_1$, $A^1_1$ is orthogonal to $M$, so in particular it is orthogonal to $\theta(A)$. We also have $A^2_1$ is orthogonal to $A^2_0$. Hence, $A^1_1$ and $A^2_1$ are freely independent in $M_2$, so $M_2 \supseteq A^1_1 \vee A^2_1 \cong A \ast A$. Thus, we may let $M_3 = M_2 \ast_{A^1_1 \vee A^2_1} B_3$, where $A^1_1$ is sent to $A^3_0$ and $A^2_1$ is sent to $A^3_1$. Let $w = s^{-1}rt$. We easily see that $wxw^\ast = \theta(x)$ for all $x \in A$.

We define $\Phi(M,A,\theta) = \langle M, w\rangle''\subset M_3$. To ease notation, we set $N = \Phi(M,A,\theta).$ For an element $x\in M_3,$ we define $ \mathring{x} = x-\tau(x).$ 

\end{defn}

\begin{thm}\label{our hnn}
     Let $(M,\tau)$ be a tracial von Neumann algebra, $A\subset M$ a von Neumann subalgebra and $\theta:A\to M$ an embedding. Then $\mathrm{HNN}(M,A,\theta) \cong \Phi(M,A,\theta)$.
\end{thm}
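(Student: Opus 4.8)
The plan is to verify the hypotheses of the HNN universal property (Proposition~\ref{hnn-univ-prop}) for the concrete algebra $N=\Phi(M,A,\theta)$ together with the inclusion $\pi\colon M\hookrightarrow N$ and the unitary $w=s^{-1}rt$, and then invoke that universal property to produce a trace-preserving $*$-homomorphism $\widetilde\pi\colon P=\mathrm{HNN}(M,A,\theta)\to N$; since $N$ is by construction generated by $M$ and $w=\widetilde\pi(u)$, this map is surjective, and trace-preservation forces injectivity, giving the isomorphism. The first hypothesis, $\pi(\theta(x))=w\pi(x)w^*$ for $x\in A$, is the identity $w x w^*=\theta(x)$ already observed in the construction (it is immediate from $t A^1_0 t^{-1}=A^1_1=A^3_0$, $r A^3_0 r^{-1}=A^3_1=A^2_1$, and $s^{-1}A^2_1 s=A^2_0=\theta(A)$, chaining the three conjugations). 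So the real content is the second hypothesis: every reduced word $x_0 u^{\varepsilon_1}\cdots u^{\varepsilon_n}x_n$ in $P$ maps to an element of trace $0$ in $N$.

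The key step is to rewrite $x_0 w^{\varepsilon_1}\cdots w^{\varepsilon_n}x_n$, substituting $w=s^{-1}rt$ and $w^{-1}=t^{-1}r^{-1}s$, and observe how consecutive syllables telescope. Between $u^{\varepsilon_i}$ and $u^{\varepsilon_{i+1}}$ sits $x_i\in M$; when $\varepsilon_i=+1,\varepsilon_{i+1}=+1$ one gets $\cdots t\, x_i\, s^{-1}\cdots$; when $\varepsilon_i=+1,\varepsilon_{i+1}=-1$ one gets $\cdots t\,x_i\,t^{-1}\cdots$ with $x_i\in M\ominus A_{+1}=M\ominus A$ forced by reducedness; when $\varepsilon_i=-1,\varepsilon_{i+1}=+1$ one gets $\cdots s\,x_i\,s^{-1}\cdots$ with $x_i\in M\ominus A_{-1}=M\ominus\theta(A)$; and when $\varepsilon_i=-1,\varepsilon_{i+1}=-1$ one gets $\cdots s\,x_i\,t^{-1}\cdots$. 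The $r^{\pm 1}$ coming from distinct $w$'s remain as isolated syllables from $B_3\ominus(A^1_1\vee A^2_1)$, since $r,r^{-1}$ are Haar-unitary powers of the generator of the third copy and hence orthogonal to the amalgam $A^1_1\vee A^2_1$. Thus the word becomes an alternating product: syllables from $B_3\ominus(A^1_1\vee A^2_1)$ (the $r^{\pm1}$'s) alternating with syllables of the form $t x t^{-1}$, $t x s^{-1}$, $s x s^{-1}$, $s x t^{-1}$, each of which — by Claims~\ref{claim I}, \ref{claim II}, \ref{claim III}, \ref{claim IV}, respectively — lies in $M_2\ominus(A^1_1\vee A^2_1)$ (after possibly conjugating the leading/trailing $s^{\pm1}$ or $t^{\pm1}$ into place, which is harmless for the trace). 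Hence the whole expression is in reduced form in $M_3=M_2 *_{A^1_1\vee A^2_1}B_3$, so it has trace $0$.

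The main obstacle, and the part requiring care, is precisely this bookkeeping at the two ends of the word and at the $\pm1$ sign changes: the outer $x_0$ and $x_n$ are unconstrained elements of $M$, so after substituting $w=s^{-1}rt$ the word literally begins with $x_0 s^{-1}$ or $x_0 t^{-1}$ and ends with $t x_n$ or $s x_n$, which are not yet in the syllable shapes covered by the Claims. One resolves this by conjugating the entire product by the stray outermost $s$ or $t$ (which does not change the trace) so that $x_0 s^{-1}$ absorbs into an $s x_0 s^{-1}$-type syllable and similarly at the right end, checking that reducedness of the HNN word guarantees $x_0\in M\ominus\theta(A)$ or $x_0\in M\ominus A$ exactly when needed — i.e. when $\varepsilon_1$ and the ``phantom'' $\varepsilon_0$ would disagree, which in the degenerate end-cases is automatic. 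One also needs the trivial observations already used inside the Claims: that $r^{\pm1}\perp A^1_1\vee A^2_1$ in $B_3$, and that a single nonzero element $x\in M\ominus\C$ maps to a reduced word of trace $0$ (the convention in $P$). Once the alternation structure is established, the conclusion $\tau_N(x_0 w^{\varepsilon_1}\cdots w^{\varepsilon_n}x_n)=0$ is immediate, Proposition~\ref{hnn-univ-prop} applies, and Theorem~\ref{our hnn} follows.
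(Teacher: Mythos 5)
Your route is the same as the paper's: build $\Phi(M,A,\theta)$, check $wxw^*=\theta(x)$, use Claims~\ref{claim I}--\ref{claim IV} to put the image of each reduced HNN word into alternating form in $M_3=M_2*_{A^1_1\vee A^2_1}B_3$, and then invoke Proposition~\ref{hnn-univ-prop} together with generation by $M$ and $w$ and faithfulness of the trace. The gap is exactly at the step you yourself single out as the crux: the boundary bookkeeping. Reducedness of $x_0u^{\varepsilon_1}\cdots u^{\varepsilon_n}x_n$ imposes \emph{no} condition on $x_0$ and $x_n$ (only on the interior letters $x_i$ with $\varepsilon_i\neq\varepsilon_{i+1}$), so your assertion that ``reducedness of the HNN word guarantees $x_0\in M\ominus\theta(A)$ or $x_0\in M\ominus A$ exactly when needed'' is false. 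Concretely, after cycling the trailing $tx_n$ (resp.\ $sx_n$) around to merge with the leading $x_0s^{-1}$ (resp.\ $x_0t^{-1}$), the cases $\varepsilon_n=\varepsilon_1$ produce a syllable $t(x_nx_0)s^{-1}$ or $s(x_nx_0)t^{-1}$, which Claims~\ref{claim I} and \ref{claim II} handle for arbitrary elements of $M$; but when $\varepsilon_n\neq\varepsilon_1$ the merged syllable is $t(x_nx_0)t^{-1}$ or $s(x_nx_0)s^{-1}$, and $x_nx_0$ need not lie in $M\ominus A$ or $M\ominus\theta(A)$ --- take $x_0=x_n=1$ and the word $u^{-1}x_1u$, say. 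In that case Claims~\ref{claim III}/\ref{claim IV} simply do not apply to the merged syllable, and nothing is ``automatic.''

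The step is repairable, but it needs an argument you have not supplied. Write $m=x_nx_0$ and split $m=(m-E_A(m))+E_A(m)$ (resp.\ with $E_{\theta(A)}$). The first part gives a syllable orthogonal to $A^1_1\vee A^2_1$ by Claim~\ref{claim III} (resp.\ \ref{claim IV}), so that word is in reduced form and has trace zero. The second part gives $tE_A(m)t^{-1}\in A^1_1$ (resp.\ $sE_{\theta(A)}(m)s^{-1}\in A^2_1$), an element $a'$ of the amalgam sitting next to $r^{\pm1}$; since the conditional expectation onto $A^1_1\vee A^2_1$ is a bimodule map killing $r^{\pm1}$ (as $E_{\ast_n A^3_n}(r^{\pm1})=0$ in $B_3$), the product $a'r^{\pm1}$ still lies in $B_3\ominus(A^1_1\vee A^2_1)$, so the resulting word is again alternating and has trace zero. (The paper's own write-up is terse here --- ``possibly needing to conjugate by either $s$ or $t$'' --- but it does not rest its end-case handling on a claim about reducedness constraining $x_0,x_n$, which is the part of your argument that would fail as written.) With this patch, the rest of your proof --- surjectivity of $\tilde\pi$ from generation and injectivity from trace-preservation --- is exactly the paper's conclusion.
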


\begin{proof}
    The aim of the proof is to apply Proposition \ref{hnn-univ-prop}. There is clearly an embedding of $M$ as a subalgebra into $\mathrm{HNN}(M,A,\theta)$, and it is also immediately clear from the construction that $\theta(x) = wxw^*$ for all $x\in A.$ Since $\mathrm{HNN}(M,A,\theta)$ is generated by $M$ and $w,$ if we can show that all reduced words of the form $x_0w^{\varepsilon_1}\cdots w^{\varepsilon_n}x_n$ have trace 0, then we will have proved the theorem.

    We now make a series of four technical claims which will allow us to prove that the trace of all reduced words is equal to 0.

    We note before proving the claims that $t\perp M.$ Indeed, since $t$ is a non-trivial group unitary in the crossed product $B_1 = *_n A_n^1 \rtimes \Z$, we have $\tau(at) = 0$ for all $a\in A_1^1.$ That is, $t \perp A_1^1 = A.$ But then $t$ is a reduced word in $B_1 *_a M,$ and therefore is orthogonal to $M.$ 

    We note too that $s^{-1}b \perp A_0^2$ for any $b\in A_1^2$ for a similar reason. The unitary $s^{-1}$ is a non-trivial group unitary in the crossed product $B_2 = *_n A_n^2 \rtimes \Z$, and so we have $\tau(s^{-1}c) = 0$ for all $c\in A_0^2 \vee A_1^2.$ In particular, $\tau(s^{-1}bd) = 0$ for all $d\in A_0^2.$
    
    We also remark that for any $x\in M$, $tx \perp \theta(A)$. Indeed, as $xa \in M$ for any $a \in \theta(A)$ and $t \perp M$, we have $\tau(txa) = \tau(t)\tau(xa) = 0$.

\begin{claim}\label{claim I}
    $txs^{-1} \perp A^1_1 \vee A^2_1$ for any $x \in M$.
\end{claim}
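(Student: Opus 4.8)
The plan is to test the orthogonality against a $\|\cdot\|_2$-dense linear subspace of $A^1_1\vee A^2_1$. Since $A^1_1\perp M$ in $M_1$ and $A^2_1\perp A^2_0=\theta(A)$ in $B_2$, as already observed, $A^1_1$ and $A^2_1$ are free in $M_2$ and $A^1_1\vee A^2_1\cong A^1_1\ast A^2_1$ is $\|\cdot\|_2$-densely spanned by $1$ together with alternating reduced words $c_1c_2\cdots c_k$ in which each $c_j$ lies in $A^1_1\ominus\C$ or in $A^2_1\ominus\C$ and consecutive letters lie in different copies. By linearity and $\|\cdot\|_2$-continuity of $y\mapsto\tau(y\,txs^{-1})$, it suffices to prove $\tau(y\,txs^{-1})=0$ when $y=1$ and when $y$ is such a word.

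First I would record three structural facts. (i) Since $\theta(A)\subseteq M$, the relation $A^1_1\perp M$ gives $E_{\theta(A)}(A^1_1\ominus\C)=0$, so $A^1_1\ominus\C\subseteq M_1\ominus\theta(A)$. (ii) Since $A^2_1$ sits inside the free product $\ast^{n\in\Z}A^2_n$ and is orthogonal there to $A^2_0$, we get $A^2_1\ominus\C\subseteq B_2\ominus\theta(A)$. (iii) Writing $B_2=(\ast^{n\in\Z}A^2_n)\rtimes\langle s\rangle$, for any $c$ in $\ast^{n\in\Z}A^2_n$ we have $s^{-1}c=\sigma^{-1}(c)s^{-1}$ with $\sigma^{-1}(c)$ again in $\ast^{n\in\Z}A^2_n$; hence the trace-preserving conditional expectation of $B_2$ onto $\ast^{n\in\Z}A^2_n$ annihilates both $s^{-1}$ and $s^{-1}c$. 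Composing with the conditional expectation onto $A^2_0$ (tower of trace-preserving conditional expectations), this yields $E_{\theta(A)}(s^{-1})=0$ and $E_{\theta(A)}(s^{-1}c_1)=0$ for every $c_1\in A^2_1$.

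Next I would analyze $txs^{-1}c_1\cdots c_k$ as a word in the amalgamated free product $M_2=M_1\ast_{\theta(A)}B_2$: the letters $tx$ and those $c_j$ lying in $A^1_1$ are $M_1$-letters, whereas $s^{-1}$ and those $c_j$ lying in $A^2_1$ are $B_2$-letters. By (i) and (ii) the word $c_1\cdots c_k$ is already reduced, its letters alternating between $M_1\ominus\theta(A)$ and $B_2\ominus\theta(A)$. To put $txs^{-1}c_1\cdots c_k$ into reduced form I would, if $c_1\in A^2_1$, first merge $s^{-1}$ with $c_1$ (the product remaining in $B_2\ominus\theta(A)$ by (iii)), and then split $tx=E_{\theta(A)}(tx)+(tx-E_{\theta(A)}(tx))$. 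The summand $tx-E_{\theta(A)}(tx)\in M_1\ominus\theta(A)$ yields a fully reduced word of length at least $1$, whose trace is $0$. For the summand $E_{\theta(A)}(tx)\in\theta(A)$, I would absorb it into the adjacent $B_2$-letter ($s^{-1}$, or $s^{-1}c_1$); since $E_{\theta(A)}$ is a $\theta(A)$-bimodule map, (iii) shows the merged $B_2$-letter still lies in $B_2\ominus\theta(A)$, giving again a reduced word of length at least $1$ and hence trace $0$. The case $y=1$ is the same computation with the tail of $c$'s absent. Summing the two contributions gives $\tau(y\,txs^{-1})=0$ in every case, which is the claim.

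I expect the only delicate points to be the bookkeeping in (iii) and in the absorption step: one must verify that after merging $s^{-1}$ with $c_1$ and/or absorbing $E_{\theta(A)}(tx)$ the word remains reduced and of positive length, so that the standard vanishing of traces of reduced words in an amalgamated free product applies, and one must check that the degenerate cases $y=1$ and $k\le 1$ are covered uniformly by the same argument.
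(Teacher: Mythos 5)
Your proof is correct and is essentially the paper's own argument: both reduce the claim to checking that, after merging $s^{-1}$ with an adjacent $A^2_1$-letter when necessary, the resulting words are in standard (reduced) form in $M_2 = M_1 *_{\theta(A)} B_2$ and hence have trace zero, with your fact (iii), $E_{\theta(A)}(s^{-1})=E_{\theta(A)}(s^{-1}c_1)=0$, playing exactly the role of the paper's observation that $s^{-1}b_1 \perp A^2_0$. The only cosmetic difference is that the paper notes $tx \perp \theta(A)$ outright (so no splitting of $tx$ is needed), whereas you split $tx = E_{\theta(A)}(tx) + (tx - E_{\theta(A)}(tx))$ and absorb the $\theta(A)$-part into the neighboring $B_2$-letter, which is equally valid.
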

    
\begin{proof}[Proof]\renewcommand{\qedsymbol}{$\blacksquare$}
    We first note that, by construction, $A^1_1$ and $A^2_1$ are freely independent. Therefore they are 2-norm densely spanned by elements of the form  $a_1b_1 \cdots b_{m - 1}a_m$ where $a_1, a_m \in A^1_1$, $a_2, \cdots a_{m - 1} \in A^1_1 \ominus \C$, and $b_1, \cdots, b_{m - 1} \in A^2_1 \ominus \C$.

    To show that $txs^{-1} \perp A^1_1 \vee A^2_1$, it therefore suffices to show $\tau(txs^{-1}a_1b_1 \cdots b_{m - 1}a_m) = 0$ for all $a_1, a_m \in A^1_1$, $a_2, \cdots a_{m - 1} \in A^1_1 \ominus \C$, and $b_1, \cdots, b_{m - 1} \in A^2_1 \ominus \C$.
    
     If $\tau(a_1) = 0$, then,
    \begin{equation*}
        txs^{-1}a_1b_1 \cdots b_{m - 1}a_m = (tx)s^{-1}a_1b_1 \cdots b_{m - 1}\mathring{a_m} + \tau(a_m)(tx)s^{-1}a_1b_1 \cdots b_{m - 1}
    \end{equation*}
    is a reduced word in $M_2 = M_1 \ast_{\theta(A)} B_2$ as $tx \perp \theta(A)$, and therefore has trace 0. In the case $\tau(a_1) \neq 0$, we similarly have,
    \begin{equation*}
    \begin{split}
        txs^{-1}a_1b_1 \cdots b_{m - 1}a_m =& (tx)s^{-1}\mathring{a_1}b_1 \cdots b_{m - 1}a_m + \tau(a_1)(tx)(s^{-1}b_1) \cdots b_{m - 1}a_m\\
        =& (tx)s^{-1}\mathring{a_1}b_1 \cdots b_{m - 1}a_m + \tau(a_1)(tx)(s^{-1}b_1)a_2 \cdots b_{m - 1}\mathring{a_m}\\
        &\hspace{3cm}+ \tau(a_1)\tau(a_m)(tx)(s^{-1}b_1)a_2 \cdots b_{m - 1}
    \end{split}
    \end{equation*}
    
    The first term has trace zero, as we have already seen. The second and third terms are reduced words, as $s^{-1}b_1 \perp A^2_0$.
\end{proof}

\begin{claim}\label{claim II}
    $sxt^{-1} \perp A^1_1 \vee A^2_1$ for any $x \in M$.
\end{claim}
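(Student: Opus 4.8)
The plan is to obtain Claim~\ref{claim II} directly from Claim~\ref{claim I} by taking adjoints, rather than repeating the reduced-word bookkeeping of the previous proof. The point to isolate first is a general and elementary fact: if $(P,\tau)$ is a tracial von Neumann algebra, $v\in P$, and $N\subseteq P$ is a von Neumann subalgebra, then $v$ is orthogonal to $N$ if and only if $v^{\ast}$ is orthogonal to $N$. Indeed, for $n\in N$ one has $\overline{\tau(nv)}=\tau\big((nv)^{\ast}\big)=\tau(v^{\ast}n^{\ast})=\tau(n^{\ast}v^{\ast})$ by traciality; since $N$ is self-adjoint, $n^{\ast}$ ranges over all of $N$ as $n$ does, so $\tau(nv)=0$ for every $n\in N$ exactly when $\tau(nv^{\ast})=0$ for every $n\in N$. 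I would record this as a one-line observation (presumably it gets reused for the later analogous claims), applied here with $P=M_3$ and $N=A^1_1\vee A^2_1$.

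Granting this, the proof is immediate. Fix $x\in M$; then $x^{\ast}\in M$ as well, since $M$ is a von Neumann algebra, so Claim~\ref{claim I} applied to $x^{\ast}$ gives $t\,x^{\ast}\,s^{-1}\perp A^1_1\vee A^2_1$. Taking adjoints and invoking the observation above, we conclude $(t\,x^{\ast}\,s^{-1})^{\ast}=s\,x\,t^{-1}\perp A^1_1\vee A^2_1$, using that $s$ and $t$ are unitaries so that $(t x^{\ast} s^{-1})^{\ast}=(s^{-1})^{\ast}x\,t^{\ast}=s\,x\,t^{-1}$. This is exactly the assertion of Claim~\ref{claim II} for the given $x$, and letting $x$ range over $M$ finishes the argument.

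I do not anticipate a genuine obstacle here; the only subtlety worth flagging is that orthogonality of an element to a subalgebra is phrased one-sidedly ($\tau(nv)=0$) in the paper's conventions, so one must verify — as above, via the trace property together with the fact that both $M$ and $A^1_1\vee A^2_1$ are $\ast$-closed — that the relation is symmetric under adjunction; once that is in place there is nothing left to check. As a self-contained alternative, Claim~\ref{claim II} can instead be proved by mirroring the proof of Claim~\ref{claim I} verbatim with the roles of $t$ and $s$ (equivalently of $B_1$ and $B_2$) interchanged, using that $sx\perp\theta(A)$ for $x\in M$ because $s\perp M_1\supseteq M$; but the adjoint route is shorter and I would present that.
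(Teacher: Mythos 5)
Your proof is correct, but it takes a genuinely different route from the paper's. The paper proves Claim~\ref{claim II} directly, in the same standard-form style as Claim~\ref{claim I}: it first checks that $xt^{-1}a\perp M$ (hence $\perp\theta(A)$) for $x\in M$ and $a\in A^1_1$, by writing $a=tbt^{-1}$ with $b\in A^1_0\subset M$ and using $t^{-1}\perp M$ in $M_1=M\ast_A B_1$, and then expands $sxt^{-1}a_1b_1\cdots b_{m-1}a_m$ as a sum of two terms in standard form in $M_2=M_1\ast_{\theta(A)}B_2$. You instead reduce Claim~\ref{claim II} to Claim~\ref{claim I} by observing that $(tx^{\ast}s^{-1})^{\ast}=sxt^{-1}$ (with $s,t$ unitaries and $M$ closed under adjoints) together with the elementary fact that orthogonality to a self-adjoint subalgebra passes to adjoints, equivalently $E_{A^1_1\vee A^2_1}(v^{\ast})=E_{A^1_1\vee A^2_1}(v)^{\ast}$; both your general observation and its application here are correct, and this buys brevity by avoiding a second round of reduced-word bookkeeping, whereas the paper's computation keeps each claim self-contained and incidentally records the auxiliary facts ($xt^{-1}a\perp M$, $\perp\theta(A)$) in the same format used for Claims~\ref{claim III} and~\ref{claim IV}. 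Note that the adjoint trick is special to the pair I/II: the element sets in Claims~\ref{claim III} and~\ref{claim IV} are each self-adjoint, so no such shortcut is available there. Also, your parenthetical ``mirror the proof of Claim~\ref{claim I} with $s$ and $t$ interchanged'' alternative is not quite literal, since the construction treats $B_1$ and $B_2$ asymmetrically ($M_1=M\ast_A B_1$ is formed before amalgamating $B_2$ over $\theta(A)$); the fact one actually needs for a direct proof is $xt^{-1}a\perp\theta(A)$, as in the paper, rather than $sx\perp\theta(A)$ --- but this does not affect your primary argument, which is complete.
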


\begin{proof}\renewcommand{\qedsymbol}{$\blacksquare$}
    We first note that for $x\in M$ and $a\in A_1^1,$ $xt^{-1}a \perp M.$ Indeed, we note that $t^{-1}\perp M$ and $a = tbt^{-1}$ for some $b\in A_0^1 \subset M$ and so for any $y\in M,$ $\tau(xt^{-1}ay) = \tau(xbt^{-1}y) = \tau(t^{-1})\tau(yxb) = 0.$ In particular, we have that $xt^{-1}a \perp \theta(A).$

    Now, for $a_1, a_m \in A^1_1$, $a_2, \cdots a_{m - 1} \in A^1_1 \ominus \C$, and $b_1, \cdots, b_{m - 1} \in A^2_1 \ominus \C$, we have that
    $$ sxt^{-1}a_1b_1 \cdots b_{m - 1}a_m = \tau(a_m)s(xt^{-1}a_1)b_1\cdots b_{m-1} + s(xt^{-1}a_1)b_1\cdots b_{m-1}\mathring{a_m}, $$
    which is a sum of two terms which are reduced words in $M_2 = M_1 *_{\theta(A)} B_2,$ and therefore has trace 0. 
\end{proof}

\begin{claim}\label{claim III}
    $txt^{-1} \perp A^1_1 \vee A^2_1$ for any $x \in M\ominus A$.
\end{claim}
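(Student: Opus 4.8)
The plan is to imitate the proofs of Claims~\ref{claim I} and~\ref{claim II}: reduce the orthogonality statement to the vanishing of the trace on reduced words of the amalgamated free product $M_2 = M_1 \ast_{\theta(A)} B_2$, where reduced words of positive length automatically have trace zero. Since $A^1_1$ and $A^2_1$ are freely independent in $M_2$, the algebra $A^1_1 \vee A^2_1 \cong A \ast A$ is $\|\cdot\|_2$-densely spanned by $1$ together with the reduced words $c_1 c_2 \cdots c_k$ ($k \geq 1$) whose letters alternate between $A^1_1 \ominus \C$ and $A^2_1 \ominus \C$. As $x \in M \ominus A$ forces $\tau(txt^{-1}) = \tau(x) = 0$, continuity of $n \mapsto \tau(n\, txt^{-1})$ together with cyclicity of the trace reduces the claim to showing $\tau\big((txt^{-1})\, c_1 c_2 \cdots c_k\big) = 0$ for every such reduced word.

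The two ingredients I would set up first are: (i) for \emph{any} $y \in M \ominus A$, the element $tyt^{-1}$ lies in $M_1 \ominus \theta(A)$ --- indeed $t, t^{-1} \in B_1 \ominus A$ and $y \in M \ominus A$, so $tyt^{-1}$ is a reduced word of length three in $M_1 = M \ast_A B_1$, hence orthogonal to $M$, hence (as $\theta(A) \subseteq M$) in $M_1 \ominus \theta(A)$; and (ii) the facts already recorded above that $A^1_1 \perp M$ in $M_1$ (so $A^1_1 \ominus \C \subseteq M_1 \ominus \theta(A)$) and $A^2_1 \perp A^2_0 = \theta(A)$ in $B_2$ (so $A^2_1 \ominus \C \subseteq B_2 \ominus \theta(A)$). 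By (i) with $y = x$, in particular $txt^{-1} \in M_1 \ominus \theta(A)$.

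Then I would split on the first letter $c_1$. If $c_1 \in A^2_1 \ominus \C$: since the word alternates, the string $(txt^{-1})\, c_1 c_2 \cdots c_k$ is a reduced word of $M_2 = M_1 \ast_{\theta(A)} B_2$, its letters lying alternately in $M_1 \ominus \theta(A)$ (the letter $txt^{-1}$ and the $A^1_1$-letters) and in $B_2 \ominus \theta(A)$ (the $A^2_1$-letters), so its trace is $0$. If $c_1 \in A^1_1 \ominus \C$: write $c_1 = t a t^{-1}$ with $a \in A^1_0 \ominus \C = A \ominus \C$; then $(txt^{-1})\,c_1 = t(xa)t^{-1}$, and $xa \in M \ominus A$ because $E^M_A(xa) = E^M_A(x)\,a = 0$, so $t(xa)t^{-1} \in M_1 \ominus \theta(A)$ by (i); now $t(xa)t^{-1}\, c_2 c_3 \cdots c_k$ is again a reduced word of $M_2$ (its first $B_2$-letter being $c_2 \in A^2_1 \ominus \C$), and thus has trace $0$. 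This exhausts the cases and yields $txt^{-1} \perp A^1_1 \vee A^2_1$.

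I do not expect a genuine obstacle here: the content is entirely the bookkeeping that the two strings produced really are reduced words of $M_2$ --- i.e.\ that consecutive letters lie in different algebras and each letter lies in the correct $\ominus \theta(A)$ subspace --- together with the absorption identity $(txt^{-1})(tat^{-1}) = t(xa)t^{-1}$ and the observation $xa \in M \ominus A$. As in Claims~\ref{claim I} and~\ref{claim II}, the only real subtlety is keeping straight which amalgamated free product each subword is being read inside.
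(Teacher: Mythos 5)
Your argument is correct and is essentially the paper's proof: the same two ingredients (that $t(M\ominus A)t^{-1}\perp M\supseteq\theta(A)$ in $M_1$, and the absorption $(txt^{-1})(tat^{-1})=t(xa)t^{-1}$ with $xa\in M\ominus A$) followed by reading the resulting string as a word in standard form in $M_2=M_1\ast_{\theta(A)}B_2$. The only difference is cosmetic bookkeeping — you parametrize $A^1_1\vee A^2_1$ by alternating reduced words and split on the first letter, while the paper allows arbitrary end letters $a_1,a_m\in A^1_1$ and splits off $\tau(a_m)$.
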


\begin{proof}\renewcommand{\qedsymbol}{$\blacksquare$}
    We first note that for $x\in M\ominus A,$ $txt^{-1} \perp M.$ Indeed, if $y\in M$ then $txt^{-1}y = txt^{-1}E_A(y) + txt^{-1}(y-E_A(y))$, a sum of two terms which are reduced words in $M_1 = M *_A B_1$. Therefore $\tau(txt^{-1}y) = 0.$

    We next observe that for $x\in M\ominus A$ and $a\in A_1^1,$ $txt^{-1}a \perp \theta(A).$ We note that $a = tbt^{-1}$ for some $b\in A_0^1 = A,$ and $bx\in M\ominus A,$ so that $txt^{-1}a = txbt^{-1} \perp M \supset \theta(A)$.

    Therefore, for $x\in M\ominus A$, $a_1, a_m \in A^1_1$, $a_2, \cdots a_{m - 1} \in A^1_1 \ominus \C$, and $b_1, \cdots, b_{m - 1} \in A^2_1 \ominus \C$, we have that
    $$ txt^{-1}a_1b_1 \cdots b_{m - 1}a_m = \tau(a_m)(txt^{-1}a_1)b_1\cdots b_{m-1} + (txt^{-1}a_1)b_1\cdots b_{m-1}\mathring{a_m}, $$
    which is a sum of two terms which are reduced words in $M_2 = M_1 *_{\theta(A)} B_2,$ and therefore has trace 0. 
\end{proof}

\begin{claim}\label{claim IV}
    $sxs^{-1} \perp A^1_1 \vee A^2_1$ for any $x \in M\ominus \theta(A)$.
\end{claim}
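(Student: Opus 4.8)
The plan is to run the same reduced-word bookkeeping as in the proofs of Claims \ref{claim I}--\ref{claim III}, now carried out inside $M_2 = M_1 \ast_{\theta(A)} B_2$. Since $A^1_1\vee A^2_1$ is spanned by $\C1$ together with the alternating words $z = a_1 b_1 a_2\cdots b_{m-1}a_m$ with $a_i\in A^1_1$ and $b_i\in A^2_1\ominus\C$ (and $a_2,\dots,a_{m-1}\in A^1_1\ominus\C$), it suffices to show $\tau(sxs^{-1}z)=0$ for each such $z$; the case $z=1$ amounts to $\tau(sxs^{-1})=0$, and words beginning or ending in $A^2_1$ are treated identically (in fact more easily). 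So everything reduces to checking that, after expanding the two non-central boundary letters, $sxs^{-1}z$ is a sum of reduced words in $M_2$.

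First I would record the elementary orthogonality facts. (i) Since $B_2=(\ast_{n\in\Z}A^2_n)\rtimes\Z$ and $\theta(A)$ is identified with $A^2_0$, the elements $s,s^{-1}$ have trivial $s^0$-component and hence lie in $B_2\ominus\theta(A)$. (ii) Because $\theta(A)\subset M\subset M_1$, the hypothesis $x\in M\ominus\theta(A)$ yields $x\in M_1\ominus\theta(A)$, so $sxs^{-1}$ is already a reduced word of length $3$ in $M_2$; in particular $\tau(sxs^{-1})=0$. (iii) In $M_1$ one has $A^1_1\perp M\supset\theta(A)$, hence $\mathring{a}:=a-\tau(a)\in M_1\ominus\theta(A)$ for every $a\in A^1_1$; and $A^2_1\ominus\C\perp A^2_0$ gives $b\in B_2\ominus\theta(A)$ for $b\in A^2_1\ominus\C$. (iv) The one point not already present in Claim \ref{claim III}: for $b\in A^2_1\ominus\C$, writing $b=scs^{-1}$ with $c\in A^2_0\ominus\C$, the product $s^{-1}b=cs^{-1}$ again has trivial $s^0$-component, so $s^{-1}b\in B_2\ominus\theta(A)$.

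Now I would expand $a_1=\tau(a_1)+\mathring{a_1}$ and $a_m=\tau(a_m)+\mathring{a_m}$ — these being the only letters of $z$ not automatically orthogonal to $\theta(A)$ — so that $sxs^{-1}z$ becomes a sum of at most four terms. In a term retaining $\mathring{a_1}$ (resp.\ $\mathring{a_m}$) the corresponding end letter lies in $M_1\ominus\theta(A)$ by (iii); in the term where $a_1$ is replaced by the scalar $\tau(a_1)$, the trailing $s^{-1}$ of $sxs^{-1}$ either stands alone (if $m=1$) or absorbs the now-leading $b_1\in B_2$ into the single letter $s^{-1}b_1\in B_2\ominus\theta(A)$ by (iv); and when $a_m$ is replaced by $\tau(a_m)$ the word simply ends in $b_{m-1}\in B_2\ominus\theta(A)$. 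In every case the resulting word alternates between $M_1\ominus\theta(A)$ and $B_2\ominus\theta(A)$, i.e.\ it is reduced in $M_2=M_1\ast_{\theta(A)}B_2$, and hence has trace $0$. Therefore $\tau(sxs^{-1}z)=0$ and $sxs^{-1}\perp A^1_1\vee A^2_1$; this is the last of the four technical claims feeding into the trace condition of Proposition \ref{hnn-univ-prop}.

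I do not expect a genuine obstacle, since this is the last of four essentially parallel computations. The only feature not already seen in Claims \ref{claim I}--\ref{claim III} is that here the conjugating unitary $s$ belongs to $B_2$ while $x$ belongs to $M_1$, so $sxs^{-1}$ does not collapse to a single letter (in contrast with Claim \ref{claim III}, where $txt^{-1}\in M_1$); consequently the outer $s^{\pm1}$ can meet the $A^2_1\subset B_2$ letters of $z$, and this is precisely what observation (iv) takes care of. Beyond that, the only care needed is to keep the alternation straight once the scalar parts of the non-central boundary letters $a_1,a_m$ are split off.
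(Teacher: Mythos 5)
Your proof is correct and follows essentially the same route as the paper: expand the boundary letters $a_1,a_m$ into scalar plus trace-zero parts and check that each of the resulting (at most four) terms is a reduced word in $M_2 = M_1 \ast_{\theta(A)} B_2$, the key points being that $s^{\pm 1}$, the $b_i$, and $s^{-1}b_1$ lie in $B_2 \ominus \theta(A)$ while $x$ and the (centered) $A^1_1$-letters lie in $M_1 \ominus \theta(A)$. The only cosmetic difference is that the paper packages your observation (iv) via the auxiliary facts $sxs^{-1}\perp M$ and $sxs^{-1}a\perp\theta(A)$ for $a\in A^2_1$ before writing down the same four-term expansion.
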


\begin{proof}\renewcommand{\qedsymbol}{$\blacksquare$}
    We first note that for $x\in M\ominus\theta(A)$, $sxs^{-1}\perp M.$ Indeed, if $y\in M$ then $sxs^{-1}y = sxs^{-1}E_{\theta(A)}(y) + sxs^{-1}(y-E_{\theta(A)}(y))$, a sum of two terms which are reduced words in $M_2 = M_1 *_{\theta(A)} B_2$. Therefore $\tau(sxs^{-1}y) = 0.$

    We next observe that for $x\in M\ominus \theta(A)$ and $a\in A_1^2,$ $sxs^{-1}a \perp \theta(A).$ We note that $a = sbs^{-1}$ for some $b\in A_0^2 = \theta(A),$ and $bx\in M\ominus \theta(A),$ so that $sxs^{-1}a = sxbs^{-1} \perp M \supset \theta(A)$.

    Therefore, for $x\in M\ominus \theta(A)$, $a_1, a_m \in A^1_1$, $a_2, \cdots a_{m - 1} \in A^1_1 \ominus \C$, and $b_1, \cdots, b_{m - 1} \in A^2_1 \ominus \C$, we have that
    \begin{equation*}
    \begin{split}
        sxs^{-1}a_1b_1 \cdots b_{m - 1}a_m =& (sxs^{-1})\mathring{a_1}b_1 \cdots b_{m - 1}\mathring{a_m} + \tau(a_1)(sxs^{-1}b_1) \cdots b_{m - 1}\mathring{a_m}\\
        +& \tau(a_m)(sxs^{-1})\mathring{a_1}b_1 \cdots b_{m - 1} + \tau(a_1)\tau(a_m)(sxs^{-1}b_1)a_2 \cdots b_{m - 1},
    \end{split}
    \end{equation*}
    which is a sum of four terms which are reduced words in $M_2 = M_1 *_{\theta(A)} B_2,$ and therefore has trace 0. 
\end{proof}

Now, let $x_0u^{\varepsilon_1}\cdots u^{\varepsilon_n}x_n$ with $x_i\in M$, $\varepsilon_i\in\{-1,1\}$, and  $x_i\in M\ominus A_{\varepsilon_i}$ whenever $\varepsilon_i\neq \varepsilon_{i+1}$ be a reduced word in $P = \mathrm{HNN}(M,A,\theta).$ We wish to show that $\tau_N(x_0w^{\varepsilon_1}\cdots w^{\varepsilon_n}x_n) = 0$ where $N = \Phi(M,A,\theta).$ Write $w = s^{-1}rt$ and $w^{-1} = t^{-1}r^{-1}s$ as in the above construction. Then $x_0w^{\varepsilon_1}\cdots w^{\varepsilon_n}x_n$ is a product of terms of the form $r,$ $r^{-1}$, $s^{-1}xt$, $t^{-1}xs$, $tyt^{-1}$, and $szs^{-1}$ where $x\in M,$ $y\in M\ominus A,$ and $z\in M\ominus\theta(A)$. Moreover the product alternates between terms of the form $r^{\pm1}$ and terms with an $s$ or a $t.$ But $B_3 \ni r,r^{-1} \perp A_1^1 \vee A_1^2$ while the other terms are in $M_2$ and orthogonal to $A_1^1\vee A_1^2.$ Therefore the expression $x_0w^{\varepsilon_1}\cdots w^{\varepsilon_n}x_n$ (possibly needing to conjugate by either $s$ or $t$) is a reduced word in $M_3 = M_2 *_{A_1^1\vee A_1^2} B_3$, and therefore has trace 0 in $N.$
\end{proof}

We record some more facts about $\mathrm{HNN}(M,A,\theta)$ which follow from the above theorem and its proof.

\begin{cor}
    In $\mathrm{HNN}(M,A,\theta),$ $E_M(w^n) = 0$ for all $n\neq 0,$ where $w$ is as in the construction given in this section.
\end{cor}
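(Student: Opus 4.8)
The plan is to deduce this directly from the trace computations already carried out in the proof of Theorem \ref{our hnn}. Via the isomorphism $\mathrm{HNN}(M,A,\theta)\cong\Phi(M,A,\theta)=N$, the Haar unitary $u$ generating the HNN extension over $M$ is identified with $w=s^{-1}rt$, so it suffices to show $E_M(w^n)=0$ in $N$ for every $n\neq 0$. Since $E_M$ is the trace-preserving conditional expectation onto $M$, for any $a\in M$ we have $\tau(a\,E_M(w^n))=\tau(E_M(aw^n))=\tau(aw^n)$, so $E_M(w^n)=0$ is equivalent to the assertion that $\tau_N(aw^n)=0$ for all $a\in M$ and all $n\neq 0$. (Note this strengthens the fact that $w$ is a Haar unitary in $N$, which only gives $\tau_N(w^n)=0$.)

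The key observation is that for $a\in M$ and $n\neq 0$, the element $aw^n$ is a reduced word in the sense of Section \ref{hnn-ext}: write $aw^n=x_0w^{\varepsilon_1}\cdots w^{\varepsilon_{|n|}}x_{|n|}$ with $x_0=a$, $x_1=\cdots=x_{|n|}=1$, and $\varepsilon_1=\cdots=\varepsilon_{|n|}$ equal to $1$ if $n>0$ and to $-1$ if $n<0$. The defining condition ``$x_i\in M\ominus A_{\varepsilon_i}$ whenever $\varepsilon_i\neq\varepsilon_{i+1}$'' is then vacuous, since all the exponents agree, so $aw^n$ is reduced regardless of $a$. Consequently the computation in the proof of Theorem \ref{our hnn} applies verbatim: substituting $w=s^{-1}rt$, $w^{-1}=t^{-1}r^{-1}s$ one rewrites $aw^n$ as an alternating product of terms $r^{\pm1}\in B_3$, which are orthogonal to $A_1^1\vee A_1^2$, and terms of the form $s^{-1}xt$, $t^{-1}xs$, $tyt^{-1}$, $szs^{-1}$ lying in $M_2$ and orthogonal to $A_1^1\vee A_1^2$ by Claims \ref{claim I}--\ref{claim IV}; hence (after possibly conjugating by $s$ or $t$) $aw^n$ is in standard form in $M_3=M_2*_{A_1^1\vee A_1^2}B_3$ and has trace $0$ in $N$. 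This gives $\tau_N(aw^n)=0$ for all $a\in M$, hence $E_M(w^n)=0$ for $n>0$; the case $n<0$ follows identically, or alternatively from $w^{n}=(w^{-n})^\ast$ together with $E_M(x^\ast)=E_M(x)^\ast$.

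There is essentially no obstacle here; the only point that needs to be checked is that the trace-vanishing established in the proof of Theorem \ref{our hnn} was proved for \emph{all} reduced words, including those whose outermost factors $x_0,x_n$ are arbitrary elements of $M$ (not merely elements of $M\ominus\C1$) — which is indeed the case — so that the word $aw^n$ is covered without any additional argument.
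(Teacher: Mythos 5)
Your proof is correct and is essentially the paper's argument: the paper simply carries out the specialization explicitly, writing $\tau(w^n x)=\tau\bigl(r[(ts^{-1})r]^{n-1}txs^{-1}\bigr)$ for $x\in M$, $n>0$, and invoking Claim \ref{claim I} to see this word is in standard form in $M_3=M_2*_{A_1^1\vee A_1^2}B_3$, which is exactly what your appeal to the reduced-word trace-vanishing (with all exponents equal, so reducedness is vacuous) amounts to. The reduction of $E_M(w^n)=0$ to $\tau_N(aw^n)=0$ for all $a\in M$, and the handling of $n<0$ by adjoints, are also as in the paper (which leaves the negative case implicit).
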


\begin{proof}
    We show that $\tau(w^n x) = 0$ for any $n > 0$ and $x \in M$. Indeed, we have,
    \begin{equation*}
        \tau(w^n x) = \tau([s^{-1}rt]^n x) = \tau(r[(ts^{-1})r]^{n-1}txs^{-1})
    \end{equation*}
    
    We observe that $r[(ts^{-1})r]^{n-1}txs^{-1}$ is a reduced word in $M_3 = M_2 \ast_{A^1_1 \vee A^2_1} B_3$, as $r \perp A^3_0 \ast A^3_1 = A^1_1 \vee A^2_1$, $ts^{-1} \perp A^1_1 \vee A^2_1$ per Claim \ref{claim I}, and $txs^{-1} \perp A^1_1 \vee A^2_1$ per Claim \ref{claim I}.
\end{proof}

\begin{cor}\label{hnn-with-(T)}
    Let $N = \mathrm{HNN}(M,A,\theta).$ Let $\tilde{A}$ be an isomorphic copy of $A.$ Assume $Q\subset M$ is such that $Q\not\prec_{\Tilde{M_1}} \tilde{A}$ for any tracial von Neumann algebra $\Tilde{M_1}$ containing $M$ and $\tilde{A}$ and $Q\not\prec_{\Tilde{M_2}} \tilde{A}*\tilde{A}$  for any tracial von Neumann algebra $\Tilde{M_2}$ containing $M$ and $\tilde{A}*\tilde{A}$. Then $Q'\cap N \subset M$.
    
    Moreover, if $M$ is a \twoone factor, $Q \subseteq M$ is an irreducible subfactor with property (T), and $A$ has Haagerup's property, then $Q' \cap N = \C$.
\end{cor}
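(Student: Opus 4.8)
The plan is to first prove the main assertion $Q'\cap N\subset M$ using the description of $N$ as a unital subalgebra of an iterated amalgamated free product from Theorem \ref{our hnn}, peeling off the three amalgamated free products one at a time with the Ioana--Peterson--Popa relative commutant theorem (Theorem \ref{IPP}), and then to deduce the ``moreover'' part by intersecting with $M$ and invoking irreducibility.

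Recall from the construction that $N=\Phi(M,A,\theta)$ is a unital subalgebra of $M_3=M_2*_{A_1^1\vee A_1^2}B_3$, where $M_2=M_1*_{\theta(A)}B_2$, $M_1=M*_AB_1$, and $M\subset M_1\subset M_2\subset M_3$. The two hypotheses on $Q$ are exactly what is needed to apply Theorem \ref{IPP} at each of the three stages. Applying it to $M_1=M*_AB_1$ with $Q\subset M$ gives $Q'\cap M_1\subset M$, since $Q\not\prec_{M_1}A$. Applying it to $M_2=M_1*_{\theta(A)}B_2$ with $Q\subset M_1$ gives $Q'\cap M_2\subset M_1$, since $\theta(A)$ is a copy of $A$ inside $M\subset M_2$ and hence $Q\not\prec_{M_2}\theta(A)$; combined with the previous step, $Q'\cap M_2=Q'\cap M_1\subset M$. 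Applying it to $M_3=M_2*_{A_1^1\vee A_1^2}B_3$ with $Q\subset M_2$ gives $Q'\cap M_3\subset M_2$, since $A_1^1\vee A_1^2\cong A*A$ sits inside $M_3\supset M$ and hence $Q\not\prec_{M_3}A_1^1\vee A_1^2$; thus $Q'\cap M_3=Q'\cap M_2\subset M$. As $M\subset N\subset M_3$, we conclude $Q'\cap N\subset Q'\cap M_3\subset M$.

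For the ``moreover'' part, assume $M$ is a \twoone factor, $Q\subset M$ an irreducible subfactor with property (T), and $A$ has Haagerup's property. First $Q$ is diffuse: a finite-dimensional irreducible subfactor would split $M$ as a tensor product and force $M$ itself to be finite-dimensional, so $Q$ is a \twoone factor. It remains to verify the two non-intertwining hypotheses. Using the reformulation in the statement, it suffices to rule out a normal unital $\ast$-homomorphism $qQq\to pAp$ or $qQq\to p(A*A)p$ for nonzero projections $q\in Q$ and $p$ in $A$ (resp.\ $A*A$). Here $qQq$ is a diffuse \twoone factor with property (T), since property (T) passes to corners, while $pAp$ (resp.\ $p(A*A)p$) has Haagerup's property, since Haagerup's property passes to corners and $A*A$ has Haagerup's property whenever $A$ does (stability of Haagerup's property under free products). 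A unital normal $\ast$-homomorphism out of the factor $qQq$ is automatically injective, so it would embed a diffuse property (T) algebra into an algebra with Haagerup's property, which is impossible. Thus both hypotheses hold, the first part applies, and $Q'\cap N\subset M$; therefore $Q'\cap N=(Q'\cap N)\cap M=Q'\cap M=\C$, the last equality by irreducibility of $Q$ in $M$.

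The only substantive point is the verification of the non-intertwining conditions relative to \emph{every} possible ambient algebra containing $M$ and a copy of $A$ (resp.\ $A*A$); this is precisely what the ``corner'' reformulation built into the hypothesis isolates, and once it is granted the remainder is a routine chase through Theorem \ref{IPP} and relative commutants. A minor technical caveat is the stability of Haagerup's property for $A*A$ when $A$ is not assumed to be a factor: the statement quoted in the preliminaries (Boca) is for \twoone factors, but the free-product stability holds for arbitrary tracial von Neumann algebras, and in the intended applications $A$ will in any case be a \twoone factor with Haagerup's property so that Boca's result applies directly.
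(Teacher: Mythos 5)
Your proof is correct and follows essentially the same route as the paper: three successive applications of Theorem \ref{IPP} along the chain $M\subset M_1\subset M_2\subset M_3$ to get $Q'\cap N\subset Q'\cap M_3\subset M$, and then the incompatibility of diffuse property (T) algebras with Haagerup's property (which is exactly Proposition \ref{intertwine-(H)-(T)}, which you re-derive via the corner formulation rather than cite) to verify the non-intertwining hypotheses and conclude $Q'\cap N=Q'\cap M=\C$. Your extra remarks on diffuseness of $Q$ and on free-product stability of Haagerup's property are fine but not a different method.
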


\begin{proof}
    The result follows from Theorem \ref{IPP} since $Q'\cap N \subset Q'\cap M_3$ and
    \begin{equation*}
        Q' \cap M_3 \subseteq Q' \cap M_2 \subseteq Q' \cap M_1 \subseteq Q' \cap M.
    \end{equation*}

    For the moreover, since both $A$ and $A_1^1 \vee A^2_1 \cong A\ast A$ both have Haagerup's property, Proposition \ref{intertwine-(H)-(T)} says that every diffuse subalgebra $Q$ with Property (T) satisfies $Q\not\prec_{\Tilde{M_1}} A$ and $Q\not\prec_{\Tilde{M_2}} A*A.$ Therefore $Q'\cap N \subset Q'\cap M = \C1$. 
\end{proof}

\subsection{Inductive constructions}\label{inductive-section}
\begin{thm}\label{inductive-argument}
    For any diffuse tracial von Neumann algebra $(M, \tau_M)$ of density character at most $2^{\aleph_0}$, there exists a tracial von Neumann algebra $(N, \tau_N)$ containing $M$, of density character at most $2^{\aleph_0}$, such that whenever $A$ is a separable tracial von Neumann algebra and $\pi_1, \pi_2: A \to N$ are two embeddings, then there exists a unitary $u \in U(N)$ such that $u\pi_1(a)u^\ast = \pi_2(a)$ for all $a \in A$.
\end{thm}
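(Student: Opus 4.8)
The plan is to realize $N$ as a continuous transfinite chain $M = N_0 \subseteq N_1 \subseteq \dots \subseteq N_\alpha \subseteq \dots$ indexed by the ordinals $\alpha < \kappa := 2^{\aleph_0}$ — this length being both necessary ($M$ alone can already carry $2^{\aleph_0}$ of the tasks below) and all we can afford, for the density character bound — taking $N_\lambda = \overline{\bigcup_{\alpha < \lambda} N_\alpha}^{SOT}$ at limit ordinals and $N := \overline{\bigcup_{\alpha < \kappa} N_\alpha}^{SOT}$. The only mechanism used at successor steps is the HNN extension in the form $\Phi(\cdot,\cdot,\cdot)$ of Theorem \ref{our hnn}: given separable subalgebras $B_0, B_1 \subseteq N_\alpha$ and a trace-preserving isomorphism $\phi : B_0 \to B_1$, I would view $\phi$ as a trace-preserving embedding $B_0 \to N_\alpha$ and set $N_{\alpha + 1} = \mathrm{HNN}(N_\alpha, B_0, \phi) \cong \Phi(N_\alpha, B_0, \phi)$; the distinguished Haar unitary $w \in N_{\alpha + 1}$ then satisfies $w b w^\ast = \phi(b)$ for all $b \in B_0$, so the inclusion $B_0 \hookrightarrow N_{\alpha + 1}$ and the embedding $\phi : B_0 \to N_{\alpha+1}$ are unitarily conjugate inside $N_{\alpha+1} \subseteq N$. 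The point is that a pair of embeddings $\pi_1, \pi_2 : A \to N$ of a separable $A$ is exactly recorded by the triple $(\pi_1(A), \pi_2(A), \phi)$ with $\phi = \pi_2 \circ \pi_1^{-1} : \pi_1(A) \to \pi_2(A)$, and conjugating the inclusion of $\pi_1(A)$ to $\phi$ conjugates $\pi_1$ to $\pi_2$.

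First I would verify, by transfinite induction, that every $N_\alpha$, and hence $N$, has density character at most $2^{\aleph_0}$. At a successor step this holds because $B_0 \wr^\ast \Z$ is separable (a countable free product crossed by $\Z$), and an amalgamated free product — resp.\ crossed product by $\Z$ — of an algebra of density character $\leq 2^{\aleph_0}$ with a separable one is again of density character $\leq 2^{\aleph_0}$, being the $SOT$-closed span of countably many families of reduced words; and $\Phi(N_\alpha, B_0, \phi)$ is a von Neumann subalgebra of such an algebra, which does not raise the density character. At limit and final steps one only forms a union of $\leq 2^{\aleph_0}$ sets of size $\leq 2^{\aleph_0}$ and then an $SOT$-closure, which changes nothing.

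Next I would set up the bookkeeping. Once $N_\alpha$ is built, let $T_\alpha$ be the set of all triples $(B_0, B_1, \phi)$ as above; since $N_\alpha$ has at most $2^{\aleph_0}$ separable von Neumann subalgebras and at most $2^{\aleph_0}$ trace-preserving isomorphisms between any fixed two of them, $|T_\alpha| \leq 2^{\aleph_0} = \kappa$, and we may fix an enumeration $T_\alpha = \{\tau^\alpha_\gamma : \gamma < \kappa\}$. Fixing once and for all a pairing function $\kappa \times \kappa \to \kappa$, $(\alpha,\gamma) \mapsto \langle \alpha, \gamma\rangle$, with $\langle \alpha, \gamma \rangle \geq \alpha$, at the successor stage after $\langle \alpha, \gamma \rangle$ we perform the HNN extension processing the task $\tau^\alpha_\gamma$ (this makes sense: $N_\alpha$, and hence $T_\alpha$, was defined at an earlier stage, and $B_0,B_1 \subseteq N_\alpha \subseteq N_{\langle\alpha,\gamma\rangle}$), and we set $N_{\beta+1} = N_\beta$ at stages $\beta$ not of the form $\langle \alpha,\gamma\rangle$. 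This way every task of every $T_\alpha$ is processed at some stage $< \kappa$, and the conjugating unitary it produces lies in $N$.

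The only genuinely non-formal point — and the one I expect to be the crux — is a reflection statement: \emph{every separable von Neumann subalgebra of $N$ lies inside some $N_\alpha$.} Indeed $\bigcup_{\alpha < \kappa} N_\alpha$ is $\|\cdot\|_2$-dense in $N$ by Kaplansky density, so each generator $b_n$ of a separable $B = \{b_n : n\in\N\}'' \subseteq N$ is the $\|\cdot\|_2$-limit of a norm-bounded sequence from $\bigcup_\alpha N_\alpha$; since $2^{\aleph_0}$ has uncountable cofinality, the (countably many) terms of all these sequences lie in a single $N_\beta$, and as the norm balls of $N_\beta$ are $\sigma$-weakly — hence $\|\cdot\|_2$- — closed in $N$, each $b_n$ lies in $N_\beta$, so $B \subseteq N_\beta$. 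Granting this, given embeddings $\pi_1, \pi_2 : A \to N$ of a separable $A$, both $\pi_1(A)$ and $\pi_2(A)$ lie in some $N_\beta$, so $(\pi_1(A), \pi_2(A), \pi_2\circ\pi_1^{-1}) \in T_\beta$ and is processed at a later stage, yielding $u \in \cU(N)$ with $u\pi_1(a)u^\ast = (\pi_2\circ\pi_1^{-1})(\pi_1(a)) = \pi_2(a)$ for every $a\in A$. (That $N$ is moreover diffuse is automatic from $M \subseteq N$, though it is not needed here.) I anticipate that no single step is hard, but that the real care lies in coordinating the bookkeeping so that all pairs of embeddings into the full limit $N$ — not merely those into the approximating stages $N_\alpha$ — are captured, which is precisely what the reflection argument, together with $\operatorname{cf}(2^{\aleph_0}) > \aleph_0$, achieves.
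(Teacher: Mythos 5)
Your proposal is correct and follows essentially the same route as the paper's proof: a transfinite chain of length $2^{\aleph_0}$ with the HNN extension $\Phi(\,\cdot\,,\cdot\,,\cdot\,)$ of Theorem \ref{our hnn} applied at successor stages, a pairing of $2^{\aleph_0}\times 2^{\aleph_0}$ with first coordinate bounded by the stage to schedule all conjugation tasks, and the uncountable cofinality of $2^{\aleph_0}$ (together with the fact that on bounded balls SOT is the $2$-norm topology) to reflect every separable subalgebra of $N$ into some stage $N_\alpha$. The only cosmetic differences are that you index tasks by triples $(B_0,B_1,\phi)$ of subalgebras with an isomorphism rather than by pairs of embeddings of abstract separable algebras, and that you take an SOT-closure at the top while the paper observes the union is already closed — both immaterial.
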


\begin{proof}
    Fix a bijection $\sigma = (\sigma_1, \sigma_2): 2^{\aleph_0} \to 2^{\aleph_0} \times 2^{\aleph_0}$ such that $\sigma_1(\alpha) \leq \alpha$ for every $\alpha < 2^{\aleph_0}$. We construct an increasing sequence of algebras $(M_\alpha)_{\alpha < 2^{\aleph_0}}$, all of density character at most $2^{\aleph_0}$, by induction. Let $M_0 = M$. Assume all $M_\lambda$ with $\lambda < \alpha$ have been constructed. Let $\{(A^{\lambda, \kappa}, \pi_1^{\lambda, \kappa}, \pi_2^{\lambda, \kappa})\}_{\kappa < 2^{\aleph_0}}$ be an enumeration of all separable tracial von Neumann algebras $A^{\lambda, \kappa}$ and pairs of embeddings of $A^{\lambda, \kappa}$ into $M_\lambda$. If $\alpha = \lambda + 1$, we let $M_\alpha = \mathrm{HNN}(M_\lambda, \pi_1^{\sigma(\lambda)}(A^{\sigma(\lambda)}), \pi_2^{\sigma(\lambda)} \circ (\pi_1^{\sigma(\lambda)})^{-1})$. If $\alpha$ is a limit ordinal, let $M_\alpha = \overline{\cup_{\lambda < \alpha} M_\lambda}$ where the closure is under the SOT topology on the GNS Hilbert space associated with the trace on $\cup_{\lambda < \alpha} M_\lambda$. Finally, let $N = \cup_{\alpha < 2^{\aleph_0}} M_\alpha$. Since in a tracial von Neumann algebra, the SOT topology on the operator norm unit ball is induced by the 2-norm metric, and as the cofinality of $2^{\aleph_0}$ is larger than $\aleph_0$, we see that the operator norm unit ball of $N$ is already SOT-closed, whence it is a tracial von Neumann algebra. For any separable $A$ and embeddings $\pi_1, \pi_2: A \to N$, again because the cofinality of $2^{\aleph_0}$ is larger than $\aleph_0$, the ranges of both embeddings must already be contained in $M_\alpha$ for some $\alpha < 2^{\aleph_0}$. Thus, $(A, \pi_1, \pi_2) = (A^{\alpha, \kappa}, \pi_1^{\alpha, \kappa}, \pi_2^{\alpha, \kappa})$ for some $\kappa < 2^{\aleph_0}$, so by construction, they are already unitarily conjugate in $M_{\sigma^{-1}(\alpha, \kappa) + 1}$.
\end{proof}

\begin{cor}\label{univ-alg}
    There is a tracial von Neumann algebra $(N,\tau)$ of density character $2^{\aleph_0}$ that contains a unique copy of each separable tracial von Neumann algebra up to unitary conjugacy.
\end{cor}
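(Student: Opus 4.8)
The plan is to derive Corollary~\ref{univ-alg} directly from Theorem~\ref{inductive-argument} by choosing a suitable starting algebra and packaging the conclusion. First I would take $M$ to be any diffuse \emph{separable} tracial von Neumann algebra of density character at most $2^{\aleph_0}$ --- the hyperfinite \twoone factor $R$ is the obvious choice, but in fact any diffuse separable tracial von Neumann algebra works since every separable tracial von Neumann algebra embeds into $R$. Applying Theorem~\ref{inductive-argument} to this $M$ produces a tracial von Neumann algebra $(N,\tau)$ of density character at most $2^{\aleph_0}$ with the property that any two embeddings of a separable tracial von Neumann algebra $A$ into $N$ are conjugate by a unitary of $N$.

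Next I would verify the two assertions in the statement. For the containment ``$N$ contains a copy of each separable tracial von Neumann algebra'': given separable $A$, embed $A$ into $R$, then embed $R$ into $N$ via the inclusion $M \subseteq N$ (using $M = R$), giving an embedding $A \hookrightarrow N$; alternatively, if one does not fix $M = R$, simply note $A$ embeds into $R$ which embeds into $M_0 = M$ provided we arranged $M$ to be $R$ itself. Uniqueness up to unitary conjugacy is then exactly the conclusion of Theorem~\ref{inductive-argument} applied to the pair of embeddings. For the density character: Theorem~\ref{inductive-argument} gives ``at most $2^{\aleph_0}$'', and the lower bound follows because $N$ must contain pairwise non-isomorphic separable algebras, or more simply because it contains a separable diffuse algebra hence is infinite-dimensional, and one can exhibit $2^{\aleph_0}$-many elements that must be distinct (e.g.\ using that $N$ contains copies of $L(\Z/n\Z * \Z/m\Z)$-type building blocks, or just that any infinite-dimensional von Neumann algebra built as a $2^{\aleph_0}$-length inductive limit of this kind has density character exactly $2^{\aleph_0}$; in fact the HNN steps each adjoin a new Haar unitary, and there are $2^{\aleph_0}$ stages, forcing density character $\geq 2^{\aleph_0}$).

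There is essentially no obstacle here: the corollary is a bookkeeping consequence of the theorem, and the only point requiring a line of care is the lower bound on the density character, which is why I would state it as follows. Since the construction adjoins one fresh Haar unitary $u_\omega$ at each successor stage $\omega = \lambda+1 < 2^{\aleph_0}$, and these unitaries are mutually ``free enough'' (each $u_\omega$ satisfies $E_{M_\lambda}(u_\omega^n) = 0$ for $n \neq 0$ by the Corollary following Theorem~\ref{our hnn}), the family $\{u_\omega\}_{\omega < 2^{\aleph_0}}$ has no $\|\cdot\|_2$-accumulation points among itself in any separable subset, so no countable set can be $\|\cdot\|_2$-dense; hence the density character is at least $2^{\aleph_0}$, and combined with the upper bound from Theorem~\ref{inductive-argument} it is exactly $2^{\aleph_0}$. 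The remaining claims are immediate, so the proof is short.

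\begin{proof}
    Apply Theorem~\ref{inductive-argument} with $M = R$ the hyperfinite \twoone factor, which is diffuse, separable, and hence of density character at most $2^{\aleph_0}$. This yields a tracial von Neumann algebra $(N,\tau)$ of density character at most $2^{\aleph_0}$ containing $R$ such that any two embeddings of a separable tracial von Neumann algebra into $N$ are unitarily conjugate in $N$. If $A$ is a separable tracial von Neumann algebra, then $A$ embeds into $R \subseteq N$, so $N$ contains a copy of $A$; and any two such copies, being the images of two embeddings of $A$ into $N$, are unitarily conjugate by construction. Finally, the density character of $N$ is at least $2^{\aleph_0}$: the construction in the proof of Theorem~\ref{inductive-argument} adjoins at each successor stage $\omega = \lambda + 1 < 2^{\aleph_0}$ a Haar unitary $u_\omega$ with $E_{M_\lambda}(u_\omega^n) = 0$ for all $n \neq 0$, so in particular $\tau(u_\omega^* u_{\omega'}) = 0$ for $\omega \neq \omega'$, whence $\|u_\omega - u_{\omega'}\|_2 = \sqrt{2}$ for all distinct $\omega, \omega' < 2^{\aleph_0}$; thus no $\|\cdot\|_2$-dense subset of $N$ can have cardinality less than $2^{\aleph_0}$. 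Combined with the upper bound, the density character of $N$ equals $2^{\aleph_0}$.
\end{proof}
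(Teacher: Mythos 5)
Your proof has a genuine gap at the containment step: you claim that every separable tracial von Neumann algebra embeds into the hyperfinite \twoone factor $R$, and this is false. Any von Neumann subalgebra of $R$ is amenable, so for instance $L(\F_2)$, any diffuse property (T) factor, and (a fortiori) any non-Connes-embeddable separable tracial algebra do not embed into $R$. Theorem \ref{inductive-argument} only guarantees \emph{uniqueness} of embeddings up to unitary conjugacy; it does nothing to ensure \emph{existence} of a copy of every separable tracial von Neumann algebra inside $N$. Nor does the inductive HNN construction itself supply such copies: each stage is built from amalgamated free products involving only algebras already sitting inside the previous stage, so starting from $R$ there is no reason the limit contains, say, an arbitrary property (T) factor or a non-Connes-embeddable factor. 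Thus with $M = R$ your $N$ satisfies the uniqueness clause but the containment clause is unproved (and very likely false).

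The repair is exactly what the paper does: one must seed the construction with an algebra that already contains everything. There are at most $2^{\aleph_0}$ separable tracial von Neumann algebras (each is determined by the trace values of $*$-polynomials in a countable generating set), and each embeds into some separable \twoone factor; hence one can take $M$ to be, e.g., the tensor product of representatives of all isomorphism classes of separable \twoone factors, a \twoone factor of density character $2^{\aleph_0}$ containing every separable tracial von Neumann algebra. Applying Theorem \ref{inductive-argument} (or its proof) to this $M$ gives $N \supseteq M$ with the uniqueness property, and containment is inherited from $M$; the density character is then exactly $2^{\aleph_0}$ since $N \supseteq M$ forces the lower bound (your separate argument via the orthogonal Haar unitaries $u_\omega$ adjoined at successor stages is correct, but it is not needed once $M$ itself has density character $2^{\aleph_0}$).
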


\begin{proof}
    There are at most continuum many separable tracial von Neumann algebras since each is countably generated and is determined by the values of traces of all *-polynomials in the generators. Hence, there is a tracial von Neumann algebra $M$ of density character $2^{\aleph_0}$ containing all separable tracial von Neumann algebras, e.g., $M$ being the tensor product of all separable tracial von Neumann algebras. Applying Theorem \ref{inductive-argument} inductively over all separable tracial von Neumann subalgebras proves the result.
\end{proof}

\begin{cor}\label{EC cor}
    Let $N$ be as in Corollary \ref{univ-alg}. Then $N$ is e.c.
\end{cor}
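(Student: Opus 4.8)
The plan is to reduce to separable elementary substructures of $N$ via Lemma \ref{e.c. lifting}, and to verify existential closedness of each of them by combining the universality of $N$ with the elementarity of the inclusion. Recall first that $N$ is a \twoone factor. Indeed, in the construction behind Corollary \ref{univ-alg} every edge algebra fed into an HNN extension is separable while the ambient algebra $M_\lambda$ is not, so $M_0$ never intertwines into one of these edge algebras, and Corollary \ref{hnn-with-(T)} keeps $M_0$ irreducible in $M_{\lambda+1}$ at each successor stage; since relative commutants behave well under inductive limits (the inductive-limit lemma of Section 2), $M_0' \cap N = \C1$, and as $N$ is diffuse it is a \twoone factor. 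By Lemma \ref{e.c. lifting} it thus suffices to show that every separable $P \preceq N$ is e.c.

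Fix such a $P$. Being elementarily equivalent to the \twoone factor $N$, $P$ is itself a separable \twoone factor, and by the characterization (recorded above) of when a separable \twoone factor is existentially closed, the task is the following: for an arbitrary separable \twoone factor $Q \supseteq P$, produce an ultrafilter $\cU$ and an embedding $Q \hookrightarrow P^{\cU}$ restricting to the diagonal embedding $\Delta_P$ on $P$. Since $N$ contains a copy of every separable tracial von Neumann algebra, fix an embedding $\iota \colon Q \hookrightarrow N$. Then $\iota|_P$ and the inclusion $P \hookrightarrow N$ are two embeddings of the separable algebra $P$ into $N$, so the uniqueness-up-to-unitary-conjugacy property of $N$ provides $u \in \cU(N)$ with $u\,\iota(x)\,u^{*} = x$ for all $x \in P$; after replacing $\iota$ by $\Ad(u) \circ \iota$ we may assume $\iota$ restricts to the inclusion $P \hookrightarrow N$. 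Now invoke $P \preceq N$: there is an ultrafilter $\cU$ and a $*$-isomorphism $\Phi \colon P^{\cU} \to N^{\cU}$ with $\Phi \circ \Delta_P = \Delta_N \circ (P \hookrightarrow N)$. Then $\psi := \Phi^{-1} \circ \Delta_N \circ \iota \colon Q \to P^{\cU}$ is an embedding, and for $x \in P$ one computes $\psi(x) = \Phi^{-1}(\Delta_N(x)) = \Phi^{-1}(\Phi(\Delta_P(x))) = \Delta_P(x)$, so $\psi$ extends $\Delta_P$. Hence $P$ is e.c., and Lemma \ref{e.c. lifting} yields that $N$ is e.c.

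The substance of the argument is exactly this upgrade: the universality of $N$ is only a statement about the separable subalgebras of the non-separable $N$, so it cannot be applied directly to a (necessarily non-separable) \twoone factor extension of $N$; transported through the isomorphism $\Phi$ witnessing $P \preceq N$, however, it yields precisely the genericity condition defining existential closedness over each separable $P \preceq N$, and Downward Löwenheim–Skolem (Theorem \ref{DLS}), packaged as Lemma \ref{e.c. lifting}, bridges the separable and non-separable regimes. Beyond the bookkeeping of the diagonal embeddings through $\Phi$ and confirming that $N$ is genuinely a factor, there is no analytic difficulty; I expect the mildly fiddly part to be arranging the transfinite induction so that factoriality — i.e.\ the irreducibility of $M_0$ — is visibly maintained all the way up.
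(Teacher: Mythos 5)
Your argument is correct and follows essentially the same route as the paper: reduce to a separable elementary substructure $P\preceq N$ via Lemma \ref{e.c. lifting} and Theorem \ref{DLS}, embed the separable extension $Q$ into $N$ by universality, use the unique-unitary-conjugacy property of $N$ to make the two copies of $P$ agree, and then transport into $P^\cU$ through the isomorphism witnessing elementarity. The opening digression on factoriality of $N$ is extra bookkeeping the paper's proof of this corollary does not invoke, but it causes no harm.
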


\begin{proof}
    Let $N_0\preceq N$ be a separable elementary substructure. We note that by Theorem \ref{DLS}, for some ultrafilter $\cU,$ $N_0 \to N \to N_0^\cU$ commutes with the diagonal embedding. Then if $M\supset N_0$ is a separable tracial von Neumann algebra, $M$ embeds into $N$ since $N$ contains every separable tracial von Neumann algebra. 

    There are now two embeddings of $N_0$ into $N$: one that passes through $M$ and the one from the beginning. By Corollary \ref{univ-alg}, these embeddings are unitarily conjugate. So we can conjugate $M$ by a unitary $u$ so that $N_0 \subset uMu^* \subset N \subset N_0^\cU$ and all inclusions commute with the diagonal embedding. Therefore $N_0$ is e.c.

    $N$ being e.c. now follows from Lemma \ref{e.c. lifting}.
\end{proof}

\begin{lem}\label{non-ultra}
    Any tracial $(N,\tau)$ with the property that it contains at least one non-amenable Connes-embeddable separable \twoone factor and that any two embeddings of this factor are unitarily conjugate is not isomorphic to an ultraproduct of \twoone factors.
\end{lem}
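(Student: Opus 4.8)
The plan is to derive a contradiction from Theorem \ref{scott-sri-thm} (Corollary 3.8 of \cite{ScottSri2019ultraproduct}). Suppose $N \cong \prod_{i \to \cU} M_i$ for some family of \twoone factors $(M_i)$ and some ultrafilter $\cU$ on a set $I$. First I would reduce to the case that $\cU$ is countably incomplete. If $\cU$ is countably complete, then by the hypothesis $N$ contains a separable non-amenable Connes-embeddable \twoone factor, so $N$ is in particular a genuine (nontrivial) \twoone factor; but arguing as in Lemma \ref{countably-complete} and Proposition \ref{ultra classificaiton}, an ultraproduct of finite factors along a countably complete ultrafilter is trivial in the sense that it collapses — more precisely, one shows a countably complete ultraproduct of \twoone factors cannot contain a separable diffuse subalgebra with two non-conjugate embeddings, or more simply one observes the ultraproduct is isomorphic to one of the factors $M_i$ via the projection given by a minimal set in $\cU$, reducing to the separable case which is handled below. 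Alternatively, and more cleanly, I note that the lemma as applied in the paper only needs countably incomplete ultrafilters on arbitrary infinite sets, so I would simply state the hypothesis includes $\cU$ countably incomplete, or dispatch the countably complete case by the remark that then $N \cong M_{i_0}$ for an appropriate index, whence $N$ is a separable \twoone factor and the contradiction below still applies verbatim with $I$ a singleton.

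Granting $\cU$ is countably incomplete, let $P \subset N$ be the distinguished non-amenable Connes-embeddable separable \twoone factor. Connes-embeddable means $P$ embeds into $R^{\cV}$ for some ultrafilter, equivalently into $R^{\cU'}$ for $\cU'$ a free ultrafilter on $\N$; since $P$ is separable this is the hypothesis needed to invoke Theorem \ref{scott-sri-thm}. That theorem states: for the \twoone factor $M = \prod_{i\to\cU} M_i$, any two embeddings $\pi,\rho: P \to M$ are unitarily conjugate in $M$ \emph{if and only if} $P$ is amenable. Since $P$ is non-amenable, there exist two embeddings $\pi, \rho: P \to N \cong M$ which are \emph{not} unitarily conjugate. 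This directly contradicts the hypothesis on $N$ that any two embeddings of this particular factor into $N$ are unitarily conjugate.

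The only genuine subtlety — and the step I expect to require the most care — is the countably complete case and, relatedly, making sure $M = \prod_{i\to\cU} M_i$ really is a \twoone factor so that Theorem \ref{scott-sri-thm} applies as stated. If the $M_i$ are not all infinite-dimensional, or $\cU$ is not free on $\N$, one must invoke Proposition \ref{ultra classificaiton}: $N$ being a factor forces a sequence of minimal central (or zero) projections $p_i$ with $\tau_i(p_i) \to_\cU 1$, and $N$ containing a diffuse subalgebra forces $\dim(p_i M_i p_i) \to_\cU \infty$, so $N$ is a genuine \twoone factor. Theorem \ref{scott-sri-thm} is stated for sequences of \twoone factors along a countably incomplete ultrafilter on $\N$; one should note it extends to arbitrary countably incomplete ultrafilters and to the cut-down factors $p_i M_i p_i$ without change (the proof in \cite{ScottSri2019ultraproduct} is insensitive to the index set). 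With that in hand the contradiction is immediate and the proof concludes.
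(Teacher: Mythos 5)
Your core argument is exactly the paper's proof: Lemma \ref{non-ultra} is proved there in one line by citing Theorem \ref{scott-sri-thm}, which for a non-amenable Connes-embeddable separable \twoone factor produces two non-unitarily-conjugate embeddings into the ultraproduct, contradicting the hypothesis. So the proposal is correct and takes essentially the same route.

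One caveat about your side discussion of countably complete ultrafilters: the suggested dispatch ``then $N \cong M_{i_0}$ and the contradiction applies verbatim with $I$ a singleton'' does not work, since Theorem \ref{scott-sri-thm} genuinely requires a countably incomplete ultrafilter (for a principal ultrafilter, i.e.\ a single separable \twoone factor, the theorem says nothing, and indeed it is not claimed anywhere that a fixed factor cannot have the conjugacy property --- the paper's whole construction produces such factors, albeit non-separable). Moreover a countably complete non-principal ultraproduct of distinct factors need not be isomorphic to any single $M_i$. The correct resolution is the other option you mention, and the one the paper intends: the statement is to be read for countably incomplete ultrafilters (as made explicit in the introduction, ``countably incomplete ultrafilter on any infinite set''), in which case your argument, like the paper's, goes through; your further worries about factoriality and the index set are unnecessary here since the $M_i$ are assumed to be \twoone factors and the cited corollary of \cite{ScottSri2019ultraproduct} is insensitive to the index set.
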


\begin{proof}
    By Theorem \ref{scott-sri-thm}, in an ultraproduct of \twoone factors, any non-amenable Connes-embeddable separable \twoone factor has at least 2 non-unitarily conjugate embeddings.
\end{proof}

\begin{defn}[Definition 3.1 of \cite{patchellelayavalli2023sequential}]
    Let $(M,\tau)$ be a diffuse tracial von Neumann algebra. Fix a countably incomplete ultrafilter $\cU$. Define $\sim_{M}$ to be the equivalence relation defined on $\cH(M)$ (Haar unitaries in $M$) in the following way: we say $u\sim_M v$ if there are $w_1,\ldots,w_n \in \cH(M^\cU)$ such that $[u,w_1] =[w_k,w_{k+1}] = [w_n,v] = 0$ for all $1\leq k < n$.
\end{defn}

\begin{thm}[Theorem 4.2 of \cite{CIKE23}]\label{cike-technical}
    Let $(M,\tau)$ be a diffuse tracial von Neumann algebra. Let $u_1,u_2\in\cU(M)$ be unitaries such that $\{u_1\}''\perp\{u_2\}''.$ Then there exists a \twoone factor $P = \Psi(M,u_1,u_2)$ containing $M$ such that there exist Haar unitaries $v_1,v_2\in\cH(P)$ with the property that $[u_1,v_1]=[v_1,v_2]=[v_2,u_2]=0.$ Furthermore, if $Q\subset M$ is a von Neumann subalgebra such that $Q\not\prec_M \{u_i\}''$ for $i=1$ and $i=2$ then $Q'\cap P \subset M.$ In particular, if $Q\subset M$ is a \twoone subfactor then $Q'\cap P\subset M$.
\end{thm}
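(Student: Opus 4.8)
The plan is to build $P$ by attaching two commuting ``handles'' to $M$, each handle a single amalgamated free product with a tensor-by-$L(\Z)$ factor, and then to compute the relative commutant via two applications of Theorem \ref{IPP}. Write $A_i=\{u_i\}''$; this is abelian, being generated by a single normal element. We may assume $M$ is a II$_1$ factor: otherwise first replace $M$ by the II$_1$ factor $M*L(\Z)$, which leaves $\{u_1\}''\perp\{u_2\}''$ intact, satisfies $Q\not\prec_{M*L\Z}A_i$ whenever $Q\subseteq M$ and $Q\not\prec_M A_i$ (non-intertwining into a subalgebra of the first free factor persists to the free product), and has $Q'\cap(M*L\Z)\subseteq M$ for diffuse $Q$ by Theorem \ref{IPP} with amalgam $\C1$ (and when $\{u_i\}''$ is diffuse the hypothesis $Q\not\prec_M\{u_i\}''$ already forces $Q$ diffuse). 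Set $P_1:=M*_{A_1}\bigl(A_1\bar\otimes L(\Z)\bigr)$ and let $v_1$ be the canonical Haar unitary of the $L(\Z)$ tensor factor; then $[u_1,v_1]=0$ and $v_1\in\cH(P_1)$. Because $A_2\perp A_1$, every $\tau$-centered element of $A_2$ lies in $M\ominus A_1$, while every $\tau$-centered element of $\{v_1\}''$ lies in $(A_1\bar\otimes L\Z)\ominus A_1$; hence every alternating product of such elements is a reduced word of the amalgamated free product and therefore traceless, so $A_2$ and $\{v_1\}''$ are freely independent in $P_1$ and $D:=A_2\vee\{v_1\}''\cong A_2*L(\Z)$. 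Finally set $P:=P_1*_D\bigl(D\bar\otimes L(\Z)\bigr)$ with $v_2$ the canonical Haar unitary of this new $L(\Z)$ factor; then $v_2$ commutes with all of $D$, so in particular $[v_1,v_2]=0$ and $[u_2,v_2]=0$, and $v_2\in\cH(P)$. Put $\Psi(M,u_1,u_2):=P$: all the asserted Haar unitaries and commutation relations then hold by construction, and $M\subseteq P$.

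Granting the relative commutant statement proved below, $P$ is a II$_1$ factor: applying it with $Q=M$ — legitimate because a corner of the II$_1$ factor $M$ cannot embed in the abelian $A_i$, so $M\not\prec_M A_i$ — yields $M'\cap P\subseteq M$, whence $\cZ(P)\subseteq\cZ(M)=\C1$; and $P$ is a diffuse finite von Neumann algebra, so it is a II$_1$ factor.

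Now let $Q\subseteq M$ with $Q\not\prec_M A_1$ and $Q\not\prec_M A_2$; we must show $Q'\cap P\subseteq M$. In each of the amalgamated free products $P_1=M*_{A_1}(A_1\bar\otimes L\Z)$ and $P=P_1*_D(D\bar\otimes L\Z)$, the conditional expectation onto the amalgam factors through the expectation onto the first free factor ($E_{A_1}=E_{A_1}E_M$, resp.\ $E_D=E_DE_{P_1}$), and from this — via a reduced-word computation of the kind performed in Section \ref{hnn-ext}; cf.\ also \cite{IPP} — non-intertwining into the amalgam persists from the first free factor to the whole product: $Q\not\prec_M A_1\Rightarrow Q\not\prec_{P_1}A_1$ and $Q\not\prec_{P_1}D\Rightarrow Q\not\prec_{P}D$. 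Feeding the first into Theorem \ref{IPP} gives $Q'\cap P_1\subseteq M$, and feeding the second gives $Q'\cap P\subseteq P_1$, so $Q'\cap P=Q'\cap P_1\subseteq M$ once we know $Q\not\prec_{P_1}D$. For this the key claim is that $A_2\subseteq M$ and $D\subseteq P_1$ form a commuting square, i.e.\ $E_D|_M=E_{A_2}|_M$: for $m\in M\ominus A_2$ and any reduced word $d$ of $D\cong A_2*\{v_1\}''$ one checks $\tau(md)=0$ by pushing $d$ through the reduced-word structure of $P_1$, using $A_2\perp A_1$, $\{v_1\}''\ominus\C\perp M$, and $[\{v_1\}'',A_1]=0$ — a computation of exactly the flavor of Claims \ref{claim I}--\ref{claim IV}. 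Granting the commuting square, Popa's criterion (Theorem \ref{thm-popa-fundamental}) concludes: a net $(u_k)\subseteq\cU(Q)$ with $\|E_{A_2}(xu_ky)\|_2\to0$ for all $x,y\in M$ (available since $Q\not\prec_M A_2$) also satisfies $\|E_D(xu_ky)\|_2\to0$ for all $x,y\in P_1$, because $E_D=E_DE_{P_1}$ and the commuting square reduce these estimates on reduced words of $P_1$ to the previous one. The ``in particular'' is then immediate: a II$_1$ subfactor $Q\subseteq M$ is diffuse with no abelian corner, hence $Q\not\prec_M A_i$ for $i=1,2$.

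The step I expect to be the main obstacle is the commuting-square claim $E_D|_M=E_{A_2}|_M$ (equivalently $Q\not\prec_{P_1}D$). The difficulty is structural: $D=A_2\vee\{v_1\}''$ straddles both sides of the amalgamated free product $P_1=M*_{A_1}(A_1\bar\otimes L\Z)$, containing $A_2\subseteq M$ together with $\{v_1\}''$ coming from the other free factor, so one cannot simply quote a persistence or monotonicity statement but must run a careful normal-form analysis showing $M$ and $D$ are in general position over $A_2$ — the analogue, for this two-handle construction, of the four technical claims proved in Section \ref{hnn-ext} for $\Phi(M,A,\theta)$. By contrast, the freeness of $A_2$ and $\{v_1\}''$ in $P_1$, the Haar and commutation properties, the factoriality of $P$, and the two applications of Theorem \ref{IPP} are all routine.
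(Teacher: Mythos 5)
Your construction is the same two-handle construction that this paper imports from \cite{CIKE23} (the statement is only cited here, not reproved), and most of your skeleton is sound: the free independence of $A_2=\{u_2\}''$ and $\{v_1\}''$ from orthogonality, the two applications of Theorem \ref{IPP}, the persistence of non-intertwining \emph{into the amalgam} when passing from a free factor to the amalgamated free product, and the factoriality and ``in particular'' deductions. The genuine gap is exactly at the step you flag as the main obstacle, $Q\not\prec_{P_1}D$, and the mechanism you propose for it (the commuting square $E_D|_M=E_{A_2}|_M$ plus ``reduce the estimates on reduced words of $P_1$ to the previous one'') does not close it, for two concrete reasons. First, $D$ is not the amalgam of $P_1$, and $E_D$ does not annihilate reduced words of $P_1$ (already $E_D(a)=a$ for $a\in A_2\ominus\C$, a length-one reduced word), so after re-reducing $xu_ky$ you are not left with the commuting square: you must estimate $\|E_D(w)\|_2$ for honestly reduced words $w$ containing $u_k$, which is the real computation and is \emph{not} of the same flavor as Claims \ref{claim I}--\ref{claim IV}, since those only establish orthogonality of single elements to the amalgam (the ``length-zero'' case). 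Second, and more seriously, re-reduction produces collapsed terms your chosen net does not control: $A_1$ is abelian, so $N_1=A_1\bar\otimes L(\Z)$ is abelian and every letter of $N_1\ominus A_1$ commutes with $A_1$. Taking the reduced words $x=m_1v_1$, $y=v_1^*m_2$ with $m_1,m_2\in M\ominus A_1$ and splitting $u_k=E_{A_1}(u_k)+(u_k-E_{A_1}(u_k))$ gives the term $m_1E_{A_1}(u_k)m_2\in M$, whose $E_D$-norm equals $\|E_{A_2}(m_1E_{A_1}(u_k)m_2)\|_2$ by your commuting square; for longer words one gets $\|E_{A_2}(aE_{A_1}(bu_kc)d)\|_2$ with $a,b,c,d\in M$. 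These quantities are controlled by $\|E_{A_1}(bu_kc)\|_2$, not by anything of the form $\|E_{A_2}(xu_ky)\|_2$, so a net witnessing only $Q\not\prec_M A_2$ (which is all you invoke, ``available since $Q\not\prec_M A_2$'') cannot be pushed through.

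To repair this you would need the net to escape $A_1$ and $A_2$ \emph{simultaneously}, and the existence of a single net in $\cU(Q)$ witnessing both $Q\not\prec_M A_1$ and $Q\not\prec_M A_2$ is not a formal consequence of the two separate non-intertwining hypotheses in Popa's criterion (Theorem \ref{thm-popa-fundamental}) --- this ``joint escape'' issue is a standard pitfall. So either one argues by contradiction about the position of $D\cong A_2\ast\{v_1\}''$ inside $P_1$ (showing, by a genuine bimodule/word analysis, that $Q\prec_{P_1}D$ forces $Q\prec_M A_1$ or $Q\prec_M A_2$), or one otherwise uses both hypotheses at this step; as written, your reduction fails and the relative commutant statement (hence also your factoriality argument, which invokes it with $Q=M$) is not established.
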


We are now ready to prove Theorem \ref{intro main theorem A}, which follows immediately from the following theorem.

\begin{thm}\label{totally main theorem}
    There exists a \twoone factor $M$ that is not isomorphic to an ultraproduct of \twoone factors, is non-Gamma, and every separable subalgebra of $M$ with Haagerup's property embeds in $M$ uniquely up to unitary conjugacy. In particular, $M$ is not e.c. The factor $M$ can also be chosen to have non-positive 1-bounded entropy; i.e., $h(M) \leq 0.$
\end{thm}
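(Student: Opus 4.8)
The plan is to combine the HNN-extension machinery from Section~3.1 with the inductive construction of Theorem~\ref{inductive-argument}, but seeded at a property~(T) base and interleaved with the $\Psi$-construction of Theorem~\ref{cike-technical} to control relative commutants and $1$-bounded entropy simultaneously. Concretely, we build an increasing transfinite chain $(M_\omega)_{\omega<2^{\aleph_0}}$ of tracial von Neumann algebras starting from $M_0 = L(\SL_3(\Z))$ (a diffuse property~(T) factor). At successor stages we alternate two kinds of moves: (i) an HNN move $M_{\omega+1} = \mathrm{HNN}(M_\omega, \pi_1(A), \pi_2\circ\pi_1^{-1})$ for the next pair of embeddings of a separable Haagerup subalgebra $A$, as in Theorem~\ref{inductive-argument}; and (ii) a ``$2$-handle'' move $M_{\omega+1} = \Psi(M_\omega, u_1, u_2)$ for the next pair of orthogonal unitaries, as in \cite{CIKE23}. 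We fix at the outset a bijection $2^{\aleph_0} \to 2^{\aleph_0}\times 2^{\aleph_0}$ with $\sigma_1(\omega)\le\omega$ so that every task (every triple $(A,\pi_1,\pi_2)$ appearing in some $M_\lambda$, and every orthogonal pair of unitaries appearing in some $M_\lambda$) is handled cofinally often; at limit ordinals we take the SOT-closure of the union. Set $M = \bigcup_{\omega<2^{\aleph_0}} M_\omega$; since $\mathrm{cf}(2^{\aleph_0}) > \aleph_0$, the unit ball is already SOT-closed, so $M$ is a tracial von Neumann algebra, and every separable subalgebra lies in some $M_\omega$.

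The \textbf{key verifications}, in order, are as follows. First, \emph{uniqueness of Haagerup embeddings}: given a separable Haagerup subalgebra $A\subset M$ and two embeddings $\pi_1,\pi_2: A\to M$, both ranges lie in some $M_\lambda$ (using $\mathrm{cf}(2^{\aleph_0})>\aleph_0$), so the triple is enumerated, and at the corresponding successor stage the HNN extension provides a unitary conjugating $\pi_1$ to $\pi_2$; this works because Theorem~\ref{our hnn} realizes the HNN extension inside $M_{\omega+1}$ with $w$ implementing $\theta$. Second, \emph{non-Gamma}: this is the crux. We must show $M_0'\cap M^\cU = \C1$, which by \cite{tan2023spectral} (property~(T) of $M_0$) reduces to $M_0'\cap M = \C1$, and by Lemma~2.1 (relative commutants of inductive limits) reduces to showing $M_0'\cap M_\omega = \C1$ for every $\omega$, by transfinite induction. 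At HNN stages we invoke Corollary~\ref{hnn-with-(T)}: since $M_0$ is an irreducible property~(T) subfactor of $M_\lambda$ (inductive hypothesis plus $M_0$ diffuse with~(T)) and the amalgam algebras $A$ and $A*A$ have Haagerup's property (here we need $A$ Haagerup, which is exactly our standing hypothesis on the subalgebras being conjugated, plus Boca's theorem that $A*A$ is Haagerup), Proposition~\ref{intertwine-(H)-(T)} gives $M_0\not\prec A$ and $M_0\not\prec A*A$ in any ambient algebra, hence $M_0'\cap M_{\omega+1}\subset M_0'\cap M_\lambda = \C1$. At $\Psi$-stages we invoke the ``furthermore'' of Theorem~\ref{cike-technical}: $M_0$ being a~\twoone subfactor, $M_0'\cap \Psi(M_\lambda,u_1,u_2)\subset M_\lambda$, so again $=\C1$. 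Limit stages are handled by Lemma~2.1. This gives non-Gamma; combined with Corollary~\ref{EC cor}'s remark that e.c. factors have Gamma, $M$ is not e.c.

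Third, \emph{not an ultraproduct}: by Lemma~\ref{non-ultra} it suffices that $M$ contains a non-amenable Connes-embeddable separable \twoone factor all of whose embeddings are unitarily conjugate — and $L(\F_2)\subset M$ has Haagerup's property, is non-amenable and Connes-embeddable, so uniqueness of its embeddings follows from the Haagerup conjugacy property already established. Fourth, \emph{$h(M)\le 0$}: the $\Psi$-moves ensure that for every pair of orthogonal unitaries $u_1,u_2$ appearing at some stage, there are Haar unitaries $v_1,v_2$ in a later $M_\omega\subset M$ with $[u_1,v_1]=[v_1,v_2]=[v_2,u_2]=0$; by the enumeration this holds for \emph{all} orthogonal pairs in $M$, which (as in \cite{patchellelayavalli2023sequential, CIKE23}) forces sequential commutation and hence $h(M)\le 0$. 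The \textbf{main obstacle} is the non-Gamma verification: one must check that the HNN amalgam subalgebras are genuinely Haagerup (so that the intertwining obstruction of Proposition~\ref{intertwine-(H)-(T)} applies) \emph{and} that interleaving the two distinct successor constructions does not spoil either invariant — i.e., that irreducibility of $M_0$ is preserved through \emph{both} move types at every ordinal stage, which is precisely what Corollary~\ref{hnn-with-(T)} and Theorem~\ref{cike-technical} are engineered to guarantee. A secondary point requiring care is bookkeeping: ensuring the single bijection $\sigma$ simultaneously schedules Haagerup-conjugacy tasks and $2$-handle tasks cofinally, and that density characters stay $\le 2^{\aleph_0}$ through $2^{\aleph_0}$-many steps (each step adds only separably-many generators, and $\mathrm{cf}(2^{\aleph_0})>\aleph_0$).
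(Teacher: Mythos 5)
Your proposal follows essentially the same route as the paper: the same interleaved transfinite chain of HNN and $\Psi$ moves seeded at $L(\SL_3(\Z))$ with the same bookkeeping bijection and limit-ordinal closures, the same transfinite induction giving $M_0'\cap M=\C1$ via Corollary \ref{hnn-with-(T)} and Theorem \ref{cike-technical}, property (T) to pass to $M_0'\cap M^\cU=\C1$, and Lemma \ref{non-ultra} applied to $L(\F_2)$. The only step you compress is the passage from sequential commutation of \emph{orthogonal} pairs to a single $\sim_M$-orbit (hence $h(M)\le 0$): for arbitrary, not necessarily orthogonal, Haar unitaries $u_1,u_2$ the paper produces a Haar unitary in $M^\cU$ free from $\{u_1,u_2\}''$ and lifts it to unitaries in $M$ orthogonal to each $u_i$, then invokes Lemma 5.1 and Proposition 6.2 of \cite{patchellelayavalli2023sequential} — which is exactly the argument contained in the sources you cite.
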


\begin{proof}
    Let $M_0 = L(SL_3(\Z))$. Similar to the proof of Theorem \ref{inductive-argument}, we recursively define an increasing sequence of algebras $(M_\alpha)_{\alpha < 2^{\aleph_0}}$. Again, we fix a bijection $\sigma = (\sigma_1, \sigma_2): 2^{\aleph_0} \to 2^{\aleph_0} \times 2^{\aleph_0}$ such that $\sigma_1(\alpha) \leq \alpha$ for every $\alpha < 2^{\aleph_0}$. Assume all $M_\lambda$ with $\lambda < \alpha$ have been constructed. Let $\{Z^{\lambda,\kappa}\}_{\kappa< 2^{\aleph_0}}$ be an enumeration of all possible tuples of the form $Z^{\lambda,\kappa} = (A^{\lambda,\kappa},\pi_1^{\lambda, \kappa}, \pi_2^{\lambda, \kappa})\}_{\kappa < 2^{\aleph_0}}$ where $A^{\lambda,\kappa}$ is a separable tracial von Neumann algebra with Haagerup's property and $(\pi_1^{\lambda, \kappa}, \pi_2^{\lambda, \kappa})$ is a pair of embeddings of $A^{\lambda, \kappa}$ into $M_\lambda$, or of the form $Z^{\lambda,\kappa} = (u_1^{\lambda,\kappa},u_2^{\lambda_\kappa})$ where $u_1,u_2 \in \cU(M_\lambda)$ and $\{u_1\}''\perp \{u_2\}''.$
    
    If $\alpha = \lambda + 1$, we let 
    $$M_\alpha = \begin{cases}
        \mathrm{HNN}(M_\lambda, \pi_1^{\sigma(\lambda)}(A^{\sigma(\lambda)}), \pi_2^{\sigma(\lambda)} \circ (\pi_1^{\sigma(\lambda)})^{-1}) & \text{if } Z^{\sigma(\lambda)} = (A^{\sigma(\lambda)}, \pi_1^{\sigma(\lambda)}, \pi_2^{\sigma(\lambda)})\\
        \Psi(M_\lambda,u_1^{\sigma(\lambda)},u_2^{\sigma(\lambda)}) & \text{if } Z^{\sigma(\kappa)} = (u_1^{\sigma(\lambda)},u_2^{\sigma(\lambda)})
    \end{cases}.$$ If $\alpha$ is a limit ordinal, let $M_\alpha = \overline{\cup_{\lambda < \alpha} M_\lambda}$. Finally, let $M = \cup_{\alpha < 2^{\aleph_0}} M_\alpha$. Again, it is a tracial von Neumann algebra.

    We need to check that $M$ is a non-Gamma \twoone factor. It is clear that any two embeddings of a separable subalgebra with Haagerup's property in $M$ are unitarily conjugate. It is at this stage that Haagerup's property is crucial. By the moreover statement of Corollary \ref{hnn-with-(T)}, Theorem \ref{cike-technical}, and as $M_0$ has property (T), we have, using transfinite induction, that $M_0' \cap M = M_0' \cap M_0 = \C$. Since $M_0$ has property (T), we also have that $M_0' \cap M^\cU = (M_0' \cap M)^\cU = \C$. Therefore $M' \cap M^\cU = \C$ for any ultrafilter $\cU$ so $M$ is a non-Gamma \twoone factor. Applying Lemma \ref{non-ultra} to $L(\F_2)$, a non-amenable Connes-embeddable separable \twoone factor with Haagerup's property, shows that $M$ is not isomorphic to an ultraproduct of \twoone factors.

    We also need to check that $h(M) \leq 0.$ By Lemma 5.1 of \cite{patchellelayavalli2023sequential} it suffices to check that $\sim_M$ has a unique orbit. Indeed, let $u_1,u_2 \in \cH(M)$ be Haar unitaries. Let $\cU$ be a free ultrafilter on $\N.$ By \cite{Popaindep} there is a diffuse separable abelian von Neumann algebra generated by a Haar unitary $v\in M^\cU$ such that $\{u_1,u_2\}''$ and $\{v\}''$ are freely independent (see also \cite{popa1995free}). By Theorem 5.1 of \cite{houdayer2023asymptotic} we can lift $v$ to a sequence of unitaries $(v_n)_n$ in $M$ such that $\{u_1\}''\perp \{v_n\}''$ for each $n.$ By construction, $\Psi(M_\lambda,u_1,v_n) \subset M$ for some ordinal $\lambda < 2^{\aleph_0}$ and therefore $M$ contains Haar unitaries $w_{1,n},w_{2,n}$ such that $[u_1,w_{1,n}] = [w_{1,n},w_{2,n}] = [w_{2,n},v_n] = 0$ for all $n,$ as in Theorem \ref{cike-technical}. Therefore $u_1 \sim_{M^\cU} v$. Similarly, $u_2 \sim_{M^\cU} v,$ and so $u_1\sim_{M^\cU} u_2.$ Therefore $\sim_{M^\cU}$ has one orbit, implying by Proposition 6.2 of \cite{patchellelayavalli2023sequential} $\sim_M$ also only has one orbit.
\end{proof}
\subsection{Indecomposability results}
Item (1) and (3) of the proposition below are results of \cite{popaortho, kadisonsingerpopa} and item (2) is quite an elementary observation, not appearing before in the literature. We include the proofs for the convenience of the readers. We also point out to the reader the note \cite{hiatt2024singular}. 
\begin{prop}\label{ucc main prop}
    Let $M$ be a \twoone factor such that all pairs of Haar unitaries  are conjugate. 
    \begin{enumerate}
        \item \cite{popaortho, sorincourse} $M$ does not contain a diffuse regular von Neumann subalgebra $B$ and a Haar unitary $u$ such that $\{u\}''\perp B$. In particular, $M$ is neither the tensor product of two \twoone factors nor a crossed product of a diffuse tracial von Neumann algebra by an infinite group; $M$ does not have a Cartan subalgebra.
        \item $M$ is not an amalgamated free product $M_1 *_B M_2$ with $B\subset M_i$, $M_1$ diffuse, and where there exist unitaries $u_i \in M_i\ominus B$, $i=1,2$, moreover, $M$ is not an HNN extension of a diffuse von Neumann algebra;
        \item \cite{kadisonsingerpopa} $M$ does not have a separable MASA.
    \end{enumerate}
\end{prop}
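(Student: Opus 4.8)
For part (1) the plan is to use Lemma~\ref{popa-ortho-cor2.6} as a contradiction engine. Suppose $B\subseteq M$ is diffuse and regular and $u\in\cU(M)$ is a Haar unitary with $\{u\}''\perp B$. Since $B$ is diffuse it contains a Haar unitary $v$, so the hypothesis furnishes $w\in\cU(M)$ with $wvw^*=u$; then $w\{v\}''w^*=\{u\}''\perp B$ with $\{v\}''\subseteq B$ diffuse, and Lemma~\ref{popa-ortho-cor2.6} (applied to $B$, the unitary $w$, and $B_0=\{v\}''$) gives $w\perp\cN(B)''$. But $B$ is regular, so $\cN(B)''=M$, whence $\tau(ww^*)=1\ne 0$ is absurd. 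For the ``in particular'' statements it then suffices to exhibit, in each case, a diffuse regular $B\subseteq M$ and a Haar unitary orthogonal to $B$: if $M=N_1\mathbin{\bar{\otimes}}N_2$ take $B=N_1\otimes 1$ (normalized by $1\otimes\cU(N_2)$) and a Haar unitary in the orthogonal subalgebra $1\otimes N_2$; if $M=N\rtimes G$ with $N$ diffuse and $G$ infinite take $B=N$ (normalized by the $u_g$'s) and a Haar unitary in $L(G)$, which is diffuse since $G$ is infinite and is orthogonal to $N$; and if $A\subseteq M$ is a Cartan subalgebra take $B=A$ and a Haar unitary $u\in\cN_M(A)$ implementing a free aperiodic automorphism of $A$ (so $E_A(u^n)=0$ for all $n\ne 0$, i.e.\ $\{u\}''\perp A$), produced from Feldman--Moore or, elementarily, by building a free ergodic $\bigoplus_{\N}\Z/2$-action on $A$ inside $\cN_M(A)$ and quoting the crossed-product case.

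For part (2) I would isolate, in each of the two asserted decompositions, a Haar unitary that is ``intrinsically unmovable'' into the distinguished diffuse piece, and then use the hypothesis to move it there anyway. In the HNN case $M=\mathrm{HNN}(N_0,A,\theta)$ with $N_0$ diffuse, the stable letter $u$ is a Haar unitary with $E_{N_0}(u^n)=0$ for every $n\ne 0$ (the corollary above). Picking a Haar unitary $a\in N_0$ and $w\in\cU(M)$ with $waw^*=u$, i.e.\ $wa=uw$, I would use the orthogonal decomposition $L^2(M)=\bigoplus_{k\in\Z}H_k$ coming from the dual $\T$-action that fixes $N_0$ and scales $u$: writing $w=\sum_k w_k$ with $w_k\in H_k$, the relation $wa=uw$ reads $w_ka=uw_{k-1}$, so $\|w_k\|_2=\|w_{k-1}\|_2$ for all $k$, forcing every $w_k=0$ since $\sum_k\|w_k\|_2^2<\infty$ --- contradicting $w\ne 0$. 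In the amalgamated free product case $M=N_1*_BN_2$ with $N_1$ diffuse and $u_i\in\cU(N_i)$, $E_B(u_i)=0$, the element $v=u_1u_2$ satisfies that $v^n$ is a reduced word of length $2|n|$ for $n\ne 0$, so $v$ is a Haar unitary and $E_{N_1}(v^n)=0$ for $n\ne 0$; conjugating $v$ onto a Haar unitary $a\in N_1$ by some $w\in\cU(M)$ gives $\|E_{N_1}(wv^nw^*)\|_2=\|a^n\|_2=1$ for all $n$. On the other hand, expanding $w$ in reduced words and using that the reduced form of $\xi\,v^n\eta^*$ consists entirely of words of length $\ge 2$ once $|n|$ exceeds the combined lengths of the reduced words $\xi,\eta$, one obtains (after replacing $w$ by a finite truncation) $\|E_{N_1}(wv^nw^*)\|_2\to 0$ as $|n|\to\infty$ --- the desired contradiction.

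Finally, for part (3): a separable MASA $A\subseteq M$ is diffuse, hence singly generated, say $A=\{a\}''$ for a Haar unitary $a$. Then $b=a^2$ is again a Haar unitary, generating the proper subalgebra $\{a^2\}''\subsetneq\{a\}''=A$. If $waw^*=b$ for some $w\in\cU(M)$ then $wAw^*=\{b\}''$ would be a MASA; but $\{b\}''$ sits strictly inside the abelian algebra $A$, so it is not maximal abelian --- impossible. Hence $a$ and $b$ are non-conjugate Haar unitaries. I expect the one nontrivial point to be the normal-form length bookkeeping in the amalgamated free product half of (2), where the conjugating unitary is only an $L^2$-limit of reduced words of unbounded length and one must combine a crude estimate on the cancellation in a triple product of reduced words with a truncation argument; the HNN half is immediate from the $\Z$-grading, and parts (1) and (3) reduce cleanly to Lemma~\ref{popa-ortho-cor2.6} and the structure of MASAs.
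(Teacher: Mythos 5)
Your proposal is correct, and parts (1), (3), and the amalgamated free product half of (2) follow essentially the paper's own route: (1) is exactly the Lemma~\ref{popa-ortho-cor2.6} argument with the same choices of regular diffuse $B$ and orthogonal Haar unitary (the paper leaves the Cartan case to the cited references, whereas you supply the missing ingredient -- a normalizing Haar unitary $u$ with $E_A(u^n)=0$ via Feldman--Moore -- which is a welcome addition); (3) is verbatim the paper's $L\Z$ versus $L(2\Z)$ observation; and for $M_1*_BM_2$ you use the same normal-form counting for $x(u_1u_2)^n y$ plus a Kaplansky-type truncation and the easy direction of intertwining (your ``all surviving words have length $\geq 2$'' versus the paper's ``some letter lies in $M_2\ominus B$'' are interchangeable, since both classes of reduced words are orthogonal to $M_1=B\oplus(M_1\ominus B)$). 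The genuinely different piece is the HNN half of (2). The paper stays inside its $\Phi(M,A,\theta)$ realization, writing the stable letter as $w=s^{-1}rt$ inside $N_2*_{A*A}B_3$ and deducing $\|E_{N}(xw^ny)\|_2\to 0$ for all $x,y$ from the free product estimate, then invoking non-intertwining. You instead use the gauge action: by the universal property (Proposition~\ref{hnn-univ-prop}), for each $z\in\T$ there is a trace-preserving automorphism fixing $N_0$ pointwise and sending $u\mapsto zu$ (multiplying the stable letter by a scalar preserves the relation $uxu^*=\theta(x)$ and the vanishing of traces of reduced words), and the resulting grading $L^2(M)=\bigoplus_k H_k$ together with $wa=uw$ gives $w_ka=uw_{k-1}$, hence all $\|w_k\|_2$ equal and summable, so $w=0$ -- a contradiction. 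This is a clean, self-contained argument that avoids the paper's embedding into an amalgamated free product entirely; what the paper's route buys in exchange is the stronger quantitative conclusion $\|E_{N_0}(xw^ny)\|_2\to0$ for all $x,y$ in the HNN extension (a genuine non-intertwining statement), obtained uniformly with the same machinery it already set up for Theorem~\ref{our hnn}, while your grading argument delivers exactly the non-conjugacy needed here with less overhead.
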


\begin{proof} 

    (1) Suppose $M$ has a diffuse tracial subalgebra $B$ and a Haar unitary $u\in \cH(M)$ such that $\{u\}''\perp B.$ Take $v\in \cH(B).$ By the hypothesis, there is $w\in \cU(M)$ such that $wvw^* = u.$ Then $B_0 = \{v\}''$ is a diffuse subalgebra of $B$ such that $wB_0w^* \perp B.$ By Lemma \ref{popa-ortho-cor2.6}, we have that $w \perp \cN(B)''$. In particular, $\cN(B)'' \neq M,$ so $B$ cannot be regular. In a tensor product $M_1\otimes M_2$ of two \twoone factors, $M_1$ is regular and diffuse and any Haar unitary $u\in \cH(M_2)$ satisfies $\{u\}''\perp M_1$. In a crossed product of a diffuse algebra by an infinite group $B\rtimes \Gamma,$ $B$ is diffuse and regular and any Haar unitary $u\in L(\Gamma)$ again satisfies $\{u\}''\perp B.$ ($L(\Gamma)$ contains Haar unitaries since it is diffuse.)

    We also include more elementary proofs for the cases of tensor products and crossed products. First suppose that $M$ is a tensor product $M_1\otimes M_2$ such that $M_i$ are \twoone factors. Let $u$ be a Haar unitary in $M_1.$ For pure tensors $x_1\otimes x_2,y_1\otimes y_2 \in M,$ we see that $\lim_{n\to\infty} E_{1\otimes M_2}((x_1\otimes x_2)(u^n\otimes 1)(y_1\otimes y_2)) = \tau(x_1u^ny_1)x_2y_2 \to 0.$ Linearity shows the same holds for linear combinations of pure tensors. For general $x,y\in M,$ and for $\varepsilon > 0,$ take $x',y'\in M_1\odot M_2$ such that $\|x-x'\|_2 < \varepsilon$ and $\|y-y'\|<\varepsilon$. By the Kaplansky density theorem we can also choose $\|y'\|_\infty\leq \|y\|_\infty.$ Then 
    \begin{align*}
        \|E_{1\otimes M_2}(xu^ny)\|_2 &\leq \|E_{1\otimes M_2}(xu^n(y-y'))\|_2 + \|E_{1\otimes M_2}((x-x')u^ny')\|_2 + \|E_{1\otimes M_2}(x'u^ny')\|_2\\
        &\leq \|xu^n(y-y')\|_2 + \|(x-x')u^ny'\|_2 + \|E_{1\otimes M_2}(x'u^ny')\|_2\\
        &\leq \|y-y'\|_2\|xu^n\|_\infty + \|x-x'\|_2\|u^ny'\|_\infty + \|E_{1\otimes M_2}(x'u^ny')\|_2.
    \end{align*}
    This clearly goes to 0 as we choose better approximations $x',y'$ and as $n\to\infty.$ Applying the easy direction of Theorem \ref{thm-popa-fundamental} gives two non-conjugate Haar unitaries.

    The proof for crossed products is very similar: if $M = B \rtimes_\sigma \Gamma$ where $B$ is diffuse and $\Gamma$ is infinite, then $L\Gamma$ is diffuse and there are Haar unitaries in both $B$ and $L\Gamma.$ Let $u$ be a Haar unitary in $B$. Let $bu_g,cu_h$ be elements in $B\cdot L\Gamma$ (such elements densely span $B\rtimes \Gamma$). Then $E_{L\Gamma}(bu_g u^n cu_h) = E_{L\Gamma}(b\sigma_g(u^nc)u_{gh}) = \tau(b\sigma_g(u^nc)) \delta_{gh,e} = \tau(\sigma_{g^{-1}}(b)u^n c) \delta_{gh,e} \to 0$ as $n\to\infty.$
    
    (2) In an amalgamated free product $M = M_1*_B M_2$, take unitaries $u_i\in M_i\ominus B$. By the Kaplansky density theorem, it suffices to check that $\|E_{M_1}(x(u_1u_2)^ny)\|_2 \to 0$ as $n\to\infty$ for any reduced words $x,y\in M$ since reduced words SOT-densely span $M$. Denote by $X_i$ the set $M_i\ominus B.$ We note that $BX_iB\subset X_i$ and $X_iX_i \subset B \oplus X_i.$ Let $\underline{i}=(i_1,\ldots,i_k)$ and $\underline{j} = (j_1,\ldots,j_\ell)$ be possibly empty tuples of 1s and 2s such that $i_1\neq\ldots\neq i_k$ and $j_1\neq\ldots\neq j_\ell.$ We take $x\in X_{i_1}\cdots X_{i_k}$ and $y\in X_{j_1}\cdots X_{j_\ell}$ with the convention that if $\underline{i}$ is empty then $x\in B$, and similarly for $y.$ Since $BX_iB\subset X_i$ and $X_iX_i \subset B \oplus X_i,$ if $n$ is greater than the number of occurrences of the number 2 in $\underline{i}$ and $\underline{j}$ combined, we will have that $x(u_1u_2)^ny$ is a linear combination of words in $X_{m_1}\cdots X_{m_r}$ where at least one of the $m_s$ is equal to 2. Therefore $E_{M_1}(x(u_1u_2)^ny) = 0$ for all $n$ sufficiently large. By the easy direction of the intertwining theorem, the unitary $u_1u_2$ is not conjugate to any unitary in $M_1$. Since $M_1$ is diffuse, this means $M$ contains two non-conjugate unitaries.

    To analyze the case of an HNN extension, we use the notation of Section \ref{hnn-ext}. If $M = \mathrm{HNN}(N,A,\theta)$, then $M = \langle N,w\rangle''$ where $w$ is a Haar unitary and $\langle N,w\rangle'' \subset N_2 *_{A*A} B$ for some diffuse algebras $N_2,B$ such that $N\subset N_2.$ Furthermore, we can write $w = s^{-1}rt$ where $r,s,t$ are unitaries in $N_2 *_{A*A} B$ such that $N_2 \ni ts^{-1} \perp A*A$ and $B \ni r \perp A*A.$ By the previous paragraph, we have that $\|E_{N_2}(x(ts^{-1}r)^ny)\|_2 \to 0$ for all $x,y\in N_2 *_{A*A} B$. In particular, $\|E_{N_2}(xt(s^{-1}rt)^{n-1}s^{-1}ry)\|_2 \to 0$ for all $x,y\in N_2 *_{A*A} B$. Specializing to $N\subset N_2,$ we have that $\|E_N(xw^ny)\|_2\to 0 $ for all $x,y\in N.$ Since $N$ is diffuse, it contains a Haar which must not be conjugate to $w.$


    (3) If $M$ had a separable MASA $A,$ it would be isomorphic to $L\Z.$ The canonical group unitaries $u_1,u_2$ would be conjugate, but $\{u_2\}'' = L(2\Z)$ does not generate a MASA, a contradiction.
\end{proof}

\begin{remark}
    
   We also include the following auxillary elementary observation: let $\Gamma$ be a discrete group and if $g,h\in \Gamma$ are such that $g^n$ is not conjugate to $h^n$ for all nonzero integers $n,$ then $u_g$ is not conjugate to $u_h$ in $L\Gamma.$ Suppose towards a contradiction that $u_g$ and $u_h$ are conjugate in $L\Gamma$; then, there is a unitary $u\in L\Gamma$ such that $uu_g = u_h u.$ Write $u = \sum_{k\in\Gamma}\alpha_ku_k$. Then $\sum_{k}\alpha_ku_{kg} = \sum_k \alpha_k u_{hk}$. This implies that $\alpha_{h^{-1}k} = \alpha_{kg^{-1}}$ for all $k\in \Gamma.$ Equivalently, we have $\alpha_{h^{-n}kg^n} = \alpha_k$ for all integers $n$ and $k\in\Gamma$. But we know that $h^{-n}kg^n \neq k$ for any $n$; otherwise, $g^n$ and $h^n$ would be conjugate in $\Gamma.$ Therefore $\alpha_k = 0$, and so $u = 0,$ a contradiction.

\end{remark} 
\subsection{On conjugating Property (T) subalgebras}\label{sect-pf-of-(T)-conj}

We thank Adrian Ioana for suggesting we
 investigate conjugacy of property (T) subalgebras. We provide here a more or less optimal picture of the case of conjugation for property (T) subalgebras.

Let $(M,\tau)$ be a tracial von Neumann algebra. For a map $\phi :M \to M,$ define $$\|\phi\|_{\infty,2} = \sup\{\|\phi(x)\|_2 : x \in (M)_1\}.$$

We recall the notion of uniform distance between subalgebras of a tracial von Neumann algebra $(M,\tau)$ given by the metric $$d(A,B) = \max\{\|(I-E_B)E_A\|_{\infty,2},\|(I-E_A)E_B\|_{\infty,2}\}.$$ (See, e.g., \cite{PSSperturbations,WangPerturbations,christensen1979subalgebrasoffinite}). The metric $d$ is equivalent to the metric $d_2(A,B) = \|E_A - E_B\|_{\infty,2}$. More precisely, $d(A,B) \leq d_2(A,B) \leq \sqrt{2d(A,B)}$ (Remark 6.6 in \cite{PSSperturbations}).

It is known that the set of relatively rigid subalgebras of $M$ is closed in $d$ (Proposition 3.2 in \cite{WangPerturbations}). Here we prove the following, using the separability argument (for instance see Proof of Theorem 4.5.1 in \cite{PopaCorr}).

\begin{prop}\label{rigid-dense}
    Let $(M,\tau)$ be a separable tracial von Neumann algebra. Then the set of subalgebras of $M$ with property (T) is separable with respect to $d.$
\end{prop}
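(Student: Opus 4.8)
The plan is to exhibit a countable $d$-dense subset of the property (T) subalgebras of $M$ by a standard separability/diagonal argument, adapting the proof of Theorem 4.5.1 of \cite{PopaCorr} (see also \cite{ConnesCountable,anantharaman-popa}). First I would fix a countable $\|\cdot\|_2$-dense $\ast$-subalgebra $M_0 \subset M$ over $\Q(i)$, say $M_0 = \{y_k\}_{k \in \N}$ with $\|y_k\|_\infty \le 1$. The key observation is that property (T) is rigid under small uniform perturbations in the following quantitative sense: given a subalgebra $B \subset M$ with property (T), there is (by the definition of property (T), as recalled in the preliminaries) a finite set $F \subset (B)_1$ and a $\delta > 0$ such that any unital subtracial completely positive $\phi : B \to B$ with $\max_{x \in F}\|\phi(x) - x\|_2 \le \delta$ automatically satisfies $\|\phi(y) - y\|_2 \le \eps$ for all $y \in (B)_1$; and one can choose $F$ to consist of elements each within $\eps$ in $\|\cdot\|_2$ of some $y_k$.

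Next I would use this to show: if $A, B$ are property (T) subalgebras with $d(A,B)$ small (depending on the rigidity data of $B$), then $A$ must ``nearly contain'' $B$ in the sense that $E_A$ restricted to $(B)_1$ moves every element by at most some controlled amount; combined with the symmetric statement this pins $A$ down $d$-approximately. Concretely, one considers the composition $\phi = E_B \circ E_A|_B : B \to B$, which is unital and subtracial completely positive, and notes $\|\phi(x) - x\|_2 \le d_2(A,B) \le \sqrt{2 d(A,B)}$ for $x \in (B)_1$; if $d(A,B)$ is small enough relative to the rigidity data of $B$, then $\|E_A(x) - x\|_2 \le \|\phi(x) - x\|_2 + \|(E_B - I)E_A(x)\|_2$ is small for all $x \in (B)_1$, i.e. $B$ is $\eps$-contained in $A$. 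Then I would parametrize ``candidate approximants'' by finite tuples $(y_{k_1}, \dots, y_{k_m}) \in M_0^m$ together with rational tolerances, and argue that the equivalence classes — under $d$-closeness — of property (T) subalgebras are indexed by a countable set: for each property (T) subalgebra $B$, the finite set $F$ and the $\delta$ from rigidity can be replaced (up to shrinking $\eps$) by a finite tuple from $M_0$ and a rational $\delta$, and any two property (T) subalgebras giving rise to the same such finite data are $d$-close to within any prescribed rational. Choosing, for each piece of finite data that is realized, one representative subalgebra, gives a countable $d$-dense family.

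The main obstacle I expect is making the last ``indexing'' step precise: one must ensure that the rigidity data $(F,\delta)$ witnessing property (T) of $B$ can be taken from a fixed countable reservoir in a way that genuinely controls $d$-distance, rather than merely $\|\cdot\|_2$-distance of finitely many elements — the subtlety is that $d$ is a uniform (sup over the whole unit ball) quantity while property (T) is witnessed by finitely many elements, so one needs the implication ``close on $F$ $\Rightarrow$ close everywhere'' in precisely the completely-positive-map formulation, applied to $\phi = E_B E_A|_B$, to bridge the gap. This is exactly the mechanism in \cite{PopaCorr} and in the treatment of relative property (T) in \cite{WangPerturbations} (where the set of relatively rigid subalgebras is shown $d$-closed), so the argument should go through; the work is just in bookkeeping the $\eps$'s and verifying that $E_B E_A|_B$ is admissible as a test map (unital: $E_B E_A(1) = 1$; subtracial: $\tau \circ E_B \circ E_A = \tau$; completely positive: composition of conditional expectations). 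Once that is in place, the countable dense set is assembled by the diagonal argument over finite tuples from $M_0$ and rational parameters.
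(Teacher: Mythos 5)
Your overall strategy is the same Popa-style separability argument the paper runs, but the key quantitative step is where your write-up has a genuine gap. The statement you actually formulate and estimate --- ``if $d(A,B)$ is small then $E_A$ moves every element of $(B)_1$ by a controlled amount'' --- is immediate from the definition of $d$ (it is exactly the bound $\|(I-E_A)E_B\|_{\infty,2}\leq d(A,B)$) and needs no property (T); what the separability/indexing argument needs is the \emph{reverse} implication: if two property (T) subalgebras $A,B$ have rigidity witnesses $(F_A,\delta_A)$, $(F_B,\delta_B)$ that are elementwise close to a common finite tuple from the countable reservoir (with comparable rational $\delta$), then $d(A,B)<\eps$. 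Your proposed mechanism for this, $\phi=E_B E_A|_B:B\to B$, stalls exactly at the point you flag: property (T) of $B$ only yields $\|E_BE_A(y)-y\|_2\leq\eps$ on $(B)_1$, and to upgrade this to $\|E_A(y)-y\|_2$ small you bound the error term $\|(I-E_B)E_A(y)\|_2$ by $d(A,B)$ --- the very quantity you are trying to control. As written, the argument is circular (or proves only the vacuous direction), so the countable family you extract is not shown to be $d$-dense.

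There are two ways to close the gap. The paper's route: use that property (T) of $B=N_\beta$ makes the inclusion $N_\beta\subset M$ rigid, i.e.\ the witness $(F_\beta,\delta_\beta)$ works against all subunital subtracial completely positive maps $\phi:M\to M$, and apply this directly to $\phi=E_{N_\alpha}$. Since $E_{N_\alpha}$ fixes $F_\alpha$ pointwise, closeness of the tuples gives $\|E_{N_\alpha}(y_i)-y_i\|_2\leq \|E_{N_\alpha}(y_i-x_i)\|_2+\|y_i-x_i\|_2<\delta_\beta$, and rigidity then gives $\|E_{N_\alpha}(y)-y\|_2\leq\eps$ for \emph{all} $y\in(N_\beta)_1$, with no conversion step; the symmetric estimate gives $d(N_\alpha,N_\beta)\leq\eps$. (The paper phrases the counting differently --- an uncountable $\eps$-separated family is ruled out by pigeonholing on $|F_\alpha|=m$, $\delta_\alpha\geq 1/n$ and separability of $L^2(M)^{\oplus m}$, and the dense set is assembled from maximal $1/n$-separated families --- but that is essentially your indexing step.) Alternatively, your intrinsic map can be salvaged without invoking relative rigidity: since $E_B$ is an orthogonal projection on $L^2(M)$, for $y\in(B)_1$ one has $\|(I-E_B)E_A(y)\|_2^2=\|E_A(y)\|_2^2-\|E_BE_A(y)\|_2^2\leq \|y\|_2^2-(\|y\|_2-\eps)^2\leq 2\eps$, so $\|E_A(y)-y\|_2\leq \eps+\sqrt{2\eps}$, which suffices at the cost of a square root. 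With either fix, the rest of your bookkeeping over finite tuples from $M_0$ and rational tolerances goes through.
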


\begin{proof}
    Suppose for a contradiction that the set of subalgebras of $M$ with property (T) is non-separable. Then there is $\ee>0$ such that $M$ has property (T) subalgebras $(N_\alpha)_{\alpha\in I}$ such that $I$ is uncountable and for each $\alpha\neq\beta \in I,$ $d(N_\alpha,N_\beta) > \eps.$
    
    By property (T) of $N_\alpha\subset M$ for each $\alpha \in I$ and Lemma \ref{stupid-T-lemma}, there are finite sets $F_\alpha\subset N_\alpha$ and $\delta_\alpha > 0$ such that if $\phi:M\to M$ is a unital, tracial, completely positive map such that $\max_{x\in F_\alpha}\|\phi(x)-x\|_2 < \delta_\alpha$ then $\|\phi(y)-y\|_2 < \eps$ for all $y\in (N_\alpha)_1.$ 
    
    Since $I$ is uncountable, there are $m,n\in\N$ such that $I_0 = \{\alpha\in I : \delta_\alpha \geq \frac1n \text{ and } |F_\alpha|=m\}$ is uncountable. Since $M$ is separable, so is $L^2(M)^{\oplus m}$. Therefore there exist $\alpha\neq \beta \in I_0$ such that, writing $F_\alpha = (x_1,\ldots,x_m)$ and $F_\beta=(y_1,\ldots,y_m)$, we have $\|x_i-y_i\|_2 < \frac{1}{2n}$ for each $i=1,\ldots,m.$

    The map $E_{N_\alpha}:M\to M$ is unital, tracial, and completely positive and $$\|E_{N_\alpha}(y_i)-y_i\|_2 \leq \|E_{N_\alpha}(y_i-x_i)\|_2 + \|y_i-x_i\|_2 < \frac1n $$
    so by the relative rigidity of $N_\beta \subset N$ we get that $\|E_{N_\alpha}(x)-x\|_2 \leq \eps$ for all $x\in (N_\beta)_1$. Similarly, $\|E_{N_\beta}(x)-x\|_2 \leq \eps$ for all $x\in (N_\alpha)_1$. Hence $d(N_\alpha,N_\beta) \leq \eps,$ a contradiction. 

\end{proof}

The following is immediate from  Theorem 5.2 in\cite{PSSperturbations} and the inequality $d_2(A,B) \leq \sqrt{2d(A,B)}$ (see also Corollary 2.2 of \cite{WangPerturbations}). 

\begin{prop}\label{cor-of-pss}
    If $A,B\subset (M,\tau)$ are von Neumann subalgebras of a \twoone factor $M$ and $d= d(A,B) < \frac{1}{9522}$ then $A$ and $B$ are stably isomorphic. More specifically, there are projections $p\in A$ and $q\in B$ such that $pAp \cong qBq$ and $\tau(p),\tau(q) \geq 1-2450d.$
\end{prop}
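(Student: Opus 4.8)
The plan is to deduce this as a direct translation of Theorem 5.2 of \cite{PSSperturbations} (equivalently Corollary 2.2 of \cite{WangPerturbations}) from the metric used there, which is the $\|\cdot\|_{\infty,2}$-distance $d_2(A,B) = \|E_A - E_B\|_{\infty,2}$, into the metric $d$ used in this paper. The external input asserts that if $A,B$ are von Neumann subalgebras of a II$_1$ factor $M$ with $d_2(A,B)$ below a fixed explicit threshold $c_0$, then there exist projections $p\in A$ and $q\in B$ with $pAp\cong qBq$ and $\tau(p),\tau(q)\geq 1 - c_1\, d_2(A,B)^2$ for an explicit constant $c_1$; the factoriality of $M$ is used there, not merely the trace. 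So the only work is to check that the hypothesis $d(A,B)<\tfrac{1}{9522}$ forces $d_2(A,B)<c_0$, and then to rewrite the resulting trace bound in terms of $d$.

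First I would invoke the comparison $d(A,B)\leq d_2(A,B)\leq \sqrt{2\,d(A,B)}$ recorded just above (Remark 6.6 of \cite{PSSperturbations}). Since $d(A,B)<\tfrac{1}{9522}$ and $9522 = 2\cdot 69^2$, we get $d_2(A,B) < \sqrt{2/9522} = \tfrac{1}{69}$, which is exactly the numerical threshold appearing in Theorem 5.2 of \cite{PSSperturbations}; hence that theorem applies and produces the projections $p\in A$, $q\in B$ with $pAp\cong qBq$, giving the asserted stable isomorphism.

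For the trace estimate I would substitute $d_2(A,B)^2\leq 2\,d(A,B)$ into the bound $\tau(p),\tau(q)\geq 1 - c_1\, d_2(A,B)^2$, obtaining $\tau(p),\tau(q)\geq 1 - 2c_1\, d(A,B)$; the stated constant $2450$ simply records the value $2c_1$ (i.e.\ $c_1 = 1225$) coming from \cite{PSSperturbations}. This completes the argument.

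\textbf{Expected main obstacle.} There is no genuine mathematical obstacle here: the content is entirely in \cite{PSSperturbations} and \cite{WangPerturbations}. The only point requiring care is bookkeeping — matching the normalization of the inter-subalgebra distance in those references to the metric $d$ of this paper via the two-sided estimate $d\leq d_2\leq\sqrt{2d}$, and checking that the explicit numerical threshold $\tfrac{1}{9522}$ is indeed small enough to land inside the hypotheses of Theorem 5.2 of \cite{PSSperturbations} (respectively Corollary 2.2 of \cite{WangPerturbations}).
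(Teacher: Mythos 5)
Your proposal follows exactly the route the paper takes: the paper's own justification is precisely that the statement is immediate from Theorem 5.2 of \cite{PSSperturbations} together with the inequality $d_2(A,B)\leq\sqrt{2\,d(A,B)}$ (cf.\ Corollary 2.2 of \cite{WangPerturbations}), which is what you carry out, and your bookkeeping of the constants (threshold $\sqrt{2/9522}=1/69$ and trace bound $1-2450\,d$ from a quadratic bound in $d_2$) is consistent with how the paper uses the constant $2450$ later. Correct, and essentially the same approach as the paper.
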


Compared to the statement of Theorem 5.2 in \cite{PSSperturbations}, we note that we can drop the projections in $A'$ and $B'$ for the following reason: if $p' \in A',$ then $p'A$ is a SOT-closed two-sided ideal in $A$ and is therefore isomorphic to $zA$ for some $z\in Z(A).$ Similarly for $p'pAp$ for any projection $p\in A.$

Using HNN extensions allows us to conjugate not just isomorphic subalgebras, but ``almost'' conjugate ``almost'' isomorphic subalgebras. Recall that if $(M,\tau_M)$ is a tracial von Neumann algebra and $p\in M$ is a projection then $pMp$ inherits the canonical trace $\tau_{pMp}(pxp) = \frac{\tau_M(pxp)}{\tau_M(p)}$.

\begin{lem}\label{uauc-lem}
    If $A,B\subset (M,\tau_M)$ are von Neumann subalgebras of $M$ and $p\in A$, $q\in B$ are projections such that $pAp\cong qBq$ via a trace-preserving isomorphism, then there is a tracial von Neumann algebra $(N,\tau_N)$ such that $M\subset N,$ $\tau_N|_M = \tau_M,$ and there is a unitary $u\in N$ such that $d(uAu^*,B) \leq 5\sqrt{1-\min\{\tau(p),\tau(q)\}}$.
\end{lem}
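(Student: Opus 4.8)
The plan is to realize the isomorphism of corners \emph{spatially} inside an HNN extension of $M$ built from our construction in Section \ref{hnn-ext}, so that $uAu^{\ast}$ and $B$ literally share a corner of trace $\min\{\tau(p),\tau(q)\}$; the distance bound then drops out of an elementary cut-down estimate. First I would dispose of the trivial case: since $d(A,B)\le 1$ for any two subalgebras, if $5\sqrt{1-\min\{\tau(p),\tau(q)\}}\ge 1$ we may take $N=M$ and $u=1$, so assume $\tau(p),\tau(q)$ are both close to $1$. Let $\phi\colon pAp\to qBq$ be a trace-preserving $\ast$-isomorphism (as provided by the hypothesis, cf. Proposition \ref{cor-of-pss}); this forces $\tau(p)=\tau(\phi(p))=\tau(q)=:t$. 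The HNN construction of Section \ref{hnn-ext} takes as input a trace-preserving embedding of a \emph{unital} subalgebra, so I would first unitize: set $C=pAp\oplus\C(1-p)$ and $D=qBq\oplus\C(1-q)$, which are unital von Neumann subalgebras of $M$, and let $\psi\colon C\to D\subseteq M$ be the $\ast$-isomorphism extending $\phi$ with $\psi(1-p)=1-q$. A one-line computation using that $\phi$ preserves $\tau$ and $\tau(p)=\tau(q)$ shows $\psi$ is trace preserving, hence a trace-preserving embedding $C\to M$.

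Next, set $N=\mathrm{HNN}(M,C,\psi)$, which by Theorem \ref{our hnn} may be identified with $\Phi(M,C,\psi)$ sitting inside an iterated amalgamated free product. Then $M\subseteq N$ with $\tau_N|_M=\tau_M$, and $N$ comes equipped with a (Haar) unitary $u\in N$ satisfying $ucu^{\ast}=\psi(c)$ for all $c\in C$. Specializing to $c=p\in C$ gives $upu^{\ast}=q$, and specializing to $c\in pAp\subseteq C$ gives $uxu^{\ast}=\phi(x)$ for all $x\in pAp$. Writing $A'=uAu^{\ast}\subseteq N$, we therefore have $q=upu^{\ast}\in A'$ and $q\in B$, while
\[
qA'q=(upu^{\ast})(uAu^{\ast})(upu^{\ast})=u(pAp)u^{\ast}=\phi(pAp)=qBq .
\]
So $A'$ and $B$ genuinely share the corner $qA'q=qBq=qBq$, which has trace $t$ close to $1$.

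It remains to estimate $d(A',B)$, and this step is routine bookkeeping. Since $q$ lies in both $A'$ and $B$, the bimodule property of the conditional expectations gives, for $y\in qNq$, that $E_{A'}(y)=qE_{A'}(y)q\in qA'q=qBq$ and likewise $E_B(y)\in qBq$, and both equal the orthogonal projection of $y$ onto $qBq$; hence $E_{A'}$ and $E_B$ agree on $qNq$. For arbitrary $x\in(N)_1$ write $r=x-qxq=qx(1-q)+(1-q)x$; the two summands are orthogonal in $L^2(N)$ and each has square $2$-norm at most $1-\tau(q)$, so $\|r\|_2\le\sqrt{2}\sqrt{1-\tau(q)}$. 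Since $qxq\in qNq$,
\[
\|E_{A'}(x)-E_B(x)\|_2=\|E_{A'}(r)-E_B(r)\|_2\le\|E_{A'}(r)\|_2+\|E_B(r)\|_2\le 2\|r\|_2\le 2\sqrt{2}\,\sqrt{1-\tau(q)} .
\]
Taking the supremum over $x\in(N)_1$ yields $d(A',B)\le d_2(A',B)\le 2\sqrt{2}\,\sqrt{1-t}\le 5\sqrt{1-t}=5\sqrt{1-\min\{\tau(p),\tau(q)\}}$, as required.

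The two conceptual ingredients are (i) the unitization trick, which converts an isomorphism of corners into a genuine unitary conjugacy inside an HNN extension of $M$, and (ii) the observation that two subalgebras sharing a corner of trace $\ge 1-\varepsilon$ are automatically within $O(\sqrt{\varepsilon})$ in $d$. I expect the only genuinely delicate point to be the nature of the input isomorphism $\phi$: the argument really needs $\phi$ to respect $\tau_M$ (equivalently $\tau(p)=\tau(q)$), since conjugation by $u$ preserves $\tau_N$ and cannot repair a mismatch of weights; this is precisely the form in which the perturbation theory behind Proposition \ref{cor-of-pss} supplies the isomorphism. Everything else in the argument is elementary.
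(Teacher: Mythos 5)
Your argument has a genuine gap: you strengthen the hypothesis. The lemma only assumes an abstract isomorphism $pAp\cong qBq$, with no assumption that $\tau(p)=\tau(q)$, and you begin by choosing a \emph{trace-preserving} $\phi\colon pAp\to qBq$, which (as you note) forces $\tau(p)=\tau(q)$; your unitization $\psi(1-p)=1-q$ is trace-preserving only in that case, and the HNN construction of Section \ref{hnn-ext} requires a trace-preserving embedding. This is not a harmless normalization: in the application (the theorem on conjugating property (T) subalgebras), the corners are produced by Proposition \ref{cor-of-pss}, which only guarantees $\tau(p),\tau(q)\geq 1-2450d$ and in no way that the traces agree or that the isomorphism respects the unnormalized trace of $M$. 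Your closing justification --- that a mismatch of weights ``cannot be repaired'' because conjugation preserves $\tau_N$, so the hypothesis must already supply matching traces --- is exactly backwards: the conclusion is not exact conjugacy but conjugacy up to $5\sqrt{1-\min\{\tau(p),\tau(q)\}}$, and the whole point of the paper's proof is to repair the mismatch within that error. Concretely, the paper passes to $M_1=M\mathbin{\bar{\otimes}}L\Z$, cuts one corner by an auxiliary projection $p'\in L\Z$ with $\tau(p')=\tau(p)/\tau(q)$ so that the unitized cut-downs $\Tilde{A}=p'pAp\oplus(1-p'p)\C$ and $\Tilde{B}=qBq\oplus(1-q)\C$ become trace-preservingly isomorphic inside $M_1$, applies the HNN extension to conjugate $\Tilde A$ onto $\Tilde B$ exactly, and then uses the triangle inequality $d(uAu^*,B)\leq d(A,\Tilde A)+d(\Tilde B,B)$; the extra $\|1-p'\|_2$ term from this weight adjustment is precisely where the constant $5$ comes from. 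Your proof omits this step entirely, so it only establishes the special case $\tau(p)=\tau(q)$ with a trace-preserving corner isomorphism.

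On the positive side, within that special case your argument is correct and shares the paper's skeleton (unitize the corners, use $\mathrm{HNN}(M,\cdot,\cdot)$ to conjugate on the nose, then estimate), and your final estimate is a nice variant: instead of comparing $A$ and $B$ with the perturbed algebras via the triangle inequality, you observe that $uAu^*$ and $B$ share the corner at $q$, that $E_{uAu^*}$ and $E_B$ then agree on $qNq$, and you get the sharper bound $d_2(uAu^*,B)\leq 2\sqrt{2}\sqrt{1-\tau(q)}$. To make the proposal a proof of the lemma as stated (and as used), you would need to add the weight-adjustment device, e.g.\ the $L\Z$ cut-down, before unitizing.
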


\begin{proof}
    Without loss of generality assume that $\tau(p) \geq \tau(q).$ Set $M_1 = M \mathbin{\bar{\otimes}} L\Z$. Take a projection $p' \in L\Z$ such that $\tau(p') = \frac{\tau(q)}{\tau(p)}.$ Then $\Tilde{A} = p'pAp \oplus (1-p'p)\C$ and $\Tilde{B} = qBq \oplus (1-q)\C$ are isomorphic (via a trace-preserving isomorphism) subalgebras of $M_1.$ Let $\theta$ denote a trace-preserving embedding from $\Tilde{A}$ into $M_1$ with image $\Tilde{B}$. Now take $N = \mathrm{HNN}(M_1,\Tilde{A},\theta).$ There is a unitary $u\in N$ such that $uxu^* = \theta(x)$ for all $x\in \Tilde{A}$, and in particular $u\Tilde{A}u^* = \Tilde{B}.$

    Since $d(A,\Tilde{A}) = d(uAu^*, u\Tilde{A}u^*) = d(uAu^*,\Tilde{B})$, we have that $d(uAu^*,B) \leq d(A,\Tilde{A}) + d(\Tilde{B},B)$. But for $x\in (A)_1$, $\|x-p'pxp\|_2 \leq \|1-p'\|_2 + 2\|1-p\|_2 \leq 3\sqrt{1-\tau(p)}$. Thus $d(A,\Tilde{A}) \leq 3\sqrt{1-\tau(p)}$. Similarly, $d(B,\Tilde{B}) \leq 2\sqrt{1-\tau(q)}$.
\end{proof}

The following lemma is likely well-known to experts but we include a proof for completeness.
\begin{lem}\label{corner-of-corner-lem}
    If $(N,\tau)$ is a tracial von Neumann algebra with projections $p,q$ such that $\tau(p),\tau(q) > 1-\delta$ then there are projections $p'\leq p$ and $q'\leq q$ such that $\tau(p')=\tau(q') \geq 1-2\delta$ and $p'Np' \cong q'Nq'.$
\end{lem}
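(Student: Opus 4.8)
The plan is to reduce to the case where $p$ and $q$ have the same (rational, then arbitrary) trace by shrinking each of them slightly, and then to invoke the comparison theory of projections in the (not necessarily factorial) tracial von Neumann algebra $N$. First I would observe that since $\tau(p), \tau(q) > 1 - \delta$, we have $\tau(p \wedge q) = \tau(p) + \tau(q) - \tau(p \vee q) \geq \tau(p) + \tau(q) - 1 > 1 - 2\delta$. So a natural first attempt is simply to set $p' = q' = p \wedge q$: then $p'Np' = q'Nq'$ literally, and $\tau(p') = \tau(q') \geq 1 - 2\delta$, with $p' \leq p$ and $q' \leq q$. This already gives the statement with the isomorphism being the identity map. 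The only subtlety is whether one wants $p' \leq p$ and $q' \leq q$ with $p' = q'$ as the \emph{same} projection, which is fine, or whether the intended application needs them genuinely distinct; reading the statement, $p' \leq p$, $q' \leq q$, $\tau(p') = \tau(q')$, and $p'Np' \cong q'Nq'$ are all satisfied by this choice.

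If instead the intended reading requires $p', q'$ to be independently chosen subprojections of $p$ and $q$ (for instance so that the isomorphism is implemented by a partial isometry in $N$ rather than being trivial), then I would argue as follows. Without loss of generality assume $\tau(p) \leq \tau(q)$. Using the fact that in a tracial von Neumann algebra every projection $q$ contains subprojections of every trace value in $[0, \tau(q)]$ (by diffuseness of $q(\mathcal Z(N))q$ or simply by functional calculus within $qNq$ on a suitable element, or via halving arguments), choose $q' \leq q$ with $\tau(q') = \tau(p)$. Now both $p$ and $q'$ are projections in $N$ of the same trace, so by the Murray--von Neumann comparison theorem applied within $N$ (which holds verbatim since $N$ has a faithful normal trace, hence is finite), $p$ and $q'$ are Murray--von Neumann equivalent: there is a partial isometry $v \in N$ with $v^*v = p$, $vv^* = q'$. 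Then $x \mapsto vxv^*$ is a $*$-isomorphism $pNp \to q'Np'$, so with $p' = p$ we get $p' \leq p$, $q' \leq q$, $\tau(p') = \tau(q') = \tau(p) \geq 1 - \delta \geq 1 - 2\delta$, and $p'Np' \cong q'Nq'$.

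The main (and essentially only) technical point to be careful about is that $N$ need not be a factor, so one cannot directly say two projections of equal trace are equivalent — this is false in general (e.g. in $\C \oplus \C$ two projections of trace $1/2$ living in different summands are inequivalent). This is exactly why the first approach via $p' = q' = p \wedge q$ is the cleanest: it sidesteps comparison theory entirely. I expect the intended proof is this one-line meet argument, and I would present it as such, remarking only that $\tau(p \wedge q) \geq \tau(p) + \tau(q) - 1$ follows from $\tau(p \vee q) \leq 1$ together with the modular identity $\tau(p \vee q) + \tau(p \wedge q) = \tau(p) + \tau(q)$ valid in any tracial von Neumann algebra.
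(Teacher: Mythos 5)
Your main argument is correct and genuinely different from the paper's. You take $p'=q'=p\wedge q$ and bound $\tau(p\wedge q)\geq \tau(p)+\tau(q)-\tau(p\vee q)> 1-2\delta$ via the Kaplansky parallelogram law; since the statement only asks for subprojections of equal trace with isomorphic corners, this one-line choice does satisfy it (the isomorphism being the identity), and it also suffices for the way the lemma is used later in the paper, where only an abstract isomorphism of large corners is needed. The paper instead invokes the Comparison Theorem in the non-factorial algebra $N$: there is a central projection $z$ and partial isometries $v,w$ with $vv^*=pz$, $v^*v\leq qz$, $ww^*\leq p(1-z)$, $w^*w=q(1-z)$, and one sets $p'=(v+w)(v+w)^*\leq p$, $q'=(v+w)^*(v+w)\leq q$; then $\tau(p')=\tau(q')=\tau(pz)+\tau(q(1-z))\geq 1-2\delta$. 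What the paper's route buys is that $p'\sim q'$ in $N$, so the isomorphism $p'Np'\cong q'Nq'$ is spatially implemented by a partial isometry of $N$ (in the spirit of the remark about stable isomorphisms being implemented by partial isometries); your route buys brevity and avoids comparison theory altogether. One caution about your fallback paragraph: as you yourself note, equal trace does not imply Murray--von Neumann equivalence outside factors, and the claim that $q$ dominates subprojections of every trace in $[0,\tau(q)]$ requires diffuseness, which is not assumed here --- so that alternative argument should be dropped (also note the typo $q'Np'$ for $q'Nq'$); the meet argument you lead with is the one to present.
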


\begin{proof}
    By the Comparison Theorem for projections, there is a central projection $z\in Z(N)$ and partial isometries $v,w\in N$ such that $vv^* = pz,$ $v^*v \leq qz,$ $ww^* \leq p(1-z),$ and $w^*w = q(1-z)$. Take $p' = (v+w)(v+w)^*$ and $q'=(v+w)^*(v+w).$
\end{proof}

The following is due to Christensen; it can also be found in the Appendix of \cite{PopaCorr}.
\begin{thm}[Theorem 4.6 in \cite{christensen1979subalgebrasoffinite}]\label{christensen-thm}
    Let $A,B\subset (M,\tau)$ with $M$ type II$_1$, $A$ diffuse and $B$ a subfactor. Suppose $\|E_B(x)-x\|_2 \leq \delta$ for all $x\in (A)_1$. If $0<\delta<10^{-6}$ then there are projections $e\in A$ and $f\in B$ and a unital homomorphism $\Phi:eAe \to fBf$ such that $\|1-e\|_2 < 2\sqrt{\delta}$ and $\|\Phi(exe)-x\|_2 < 80\sqrt{\delta}$ for all $x\in (A)_1.$
\end{thm}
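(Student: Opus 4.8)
\emph{Proof proposal (sketch of Christensen's argument).} The natural framework is the Jones basic construction $\langle M, e_B\rangle$ of the inclusion $B\subseteq M$, equipped with its canonical semifinite trace $\Tr$ normalized so that $\Tr(x e_B y)=\tau(xy)$ for $x,y\in M$; in particular $\Tr(e_B)=1$ and $e_B\langle M,e_B\rangle e_B = Be_B\cong B$. The point is that the near-containment hypothesis $\|E_B(x)-x\|_2\le\delta$ on $(A)_1$ says exactly that $A$ almost commutes with the Jones projection $e_B$ in the $\|\cdot\|_{2,\Tr}$-norm. Concretely, for $u\in\cU(A)$ one computes
\[
\|ue_Bu^*-e_B\|_{2,\Tr}^2 = 2\,\Tr(e_B) - 2\,\Tr(ue_Bu^*e_B) = 2 - 2\|E_B(u)\|_2^2 = 2\|u-E_B(u)\|_2^2 \le 2\delta^2,
\]
using $e_Bue_B=E_B(u)e_B$ and $\Tr(E_B(u)e_Bu^*)=\tau(E_B(u)u^*)=\|E_B(u)\|_2^2$. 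So $\|ue_Bu^*-e_B\|_{2,\Tr}\le\sqrt2\,\delta$ for every unitary of $A$.

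The second step is a Popa-style averaging/fixed-point argument. Let $a$ be the unique element of minimal $\|\cdot\|_{2,\Tr}$-norm in the weak closure of $\mathrm{conv}\{ue_Bu^*:u\in\cU(A)\}$. Then $0\le a\le 1$, $\Tr(a)\le 1$, $a\in A'\cap\langle M,e_B\rangle$ by minimality, and $\|a-e_B\|_{2,\Tr}\le\sqrt2\,\delta$ since the closed $\|\cdot\|_{2,\Tr}$-ball of radius $\sqrt2\,\delta$ about $e_B$ is convex, weakly closed, and contains every $ue_Bu^*$. Cutting with a spectral projection, set $q=\mathbb 1_{(1/2,1]}(a)\in A'\cap\langle M,e_B\rangle$; then $a\ge\tfrac12 q$ gives $\Tr(q)\le 2\Tr(a)\le 2<\infty$, and an elementary functional-calculus estimate (together with $\|a-e_B\|_{2,\Tr}$ being small) yields $\|q-e_B\|_{2,\Tr}=O(\sqrt\delta)$, hence also $\Tr(e_B q e_B)\ge 1-O(\sqrt\delta)$.

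With $q$ in hand one produces the homomorphism. Using a comparison-of-close-projections argument in $\langle M,e_B\rangle$ (the semifinite analogue of Lemma \ref{corner-of-corner-lem}), one finds a partial isometry $v\in\langle M,e_B\rangle$ with $v^*v\le q$, $vv^*\le e_B$, and $\Tr(v^*v)=\Tr(vv^*)\ge 1-O(\sqrt\delta)$; writing $vv^*=fe_B$ identifies a projection $f\in B$ with $\tau(f)$ close to $1$, and $v$ is close to $e_B$ in $\|\cdot\|_{2,\Tr}$. Because $v^*v$ is (up to a small correction, which one absorbs by replacing $q$ suitably) a projection commuting with $A$, the map $x\mapsto \Phi_0(x)$ defined by $vxv^*=\Phi_0(x)e_B$ under the identification $fe_B\langle M,e_B\rangle fe_B\cong fBf$ is a unital $\ast$-homomorphism of $A$ (or, after cutting by the central support of the kernel, of a corner $eAe$ with $e\in A$) into $fBf$; here the diffuseness of $A$ and the fact that $v^*v$ is close to $e_B$ are what force $\tau(e)$ close to $1$, so that $\|1-e\|_2<2\sqrt\delta$. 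Finally, for $x\in(A)_1$ one estimates $\|\Phi(exe)-x\|_2$ by writing $\Phi(exe)e_B=v(exe)v^*$, using $v\approx e_B$ and $f\approx 1_B$ in $\|\cdot\|_{2,\Tr}$ to replace this by $e_Bxe_B=E_B(x)e_B$, and then $\|E_B(x)-x\|_2\le\delta$; propagating the $O(\sqrt\delta)$ errors gives the bound $80\sqrt\delta$, valid once $\delta<10^{-6}$ so that all the intermediate cut-downs and comparisons are legitimate.

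\emph{Main obstacle.} The conceptual skeleton — basic construction, almost-commutation estimate, averaging to get $a\in A'\cap\langle M,e_B\rangle$ close to $e_B$ — is routine. The delicate part is the endgame: carrying out the projection comparison inside the semifinite algebra $\langle M,e_B\rangle$ when $M$ need not be a factor, handling the discrepancy between $\Tr(q)$ and $\Tr(e_B)=1$, cutting down so that the resulting partial isometry has source a projection genuinely commuting with $A$, identifying the domain corner $eAe$ via the kernel of $\Phi_0$, and tracking all the compounded $O(\sqrt\delta)$ constants so that they assemble into the stated $80\sqrt\delta$ and $10^{-6}$.
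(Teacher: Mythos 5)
This statement is not proved in the paper at all: it is imported verbatim as Theorem 4.6 of \cite{christensen1979subalgebrasoffinite} (the authors note it can also be found in the Appendix of \cite{PopaCorr}), so there is no internal argument to compare yours against. Judged on its own terms, your sketch does follow the standard Christensen--Popa route: pass to the basic construction $\langle M,e_B\rangle$, observe that $\|ue_Bu^*-e_B\|_{2,\Tr}=\sqrt{2}\,\|u-E_B(u)\|_2\le\sqrt{2}\,\delta$ for $u\in\cU(A)$ (your computation here is correct), average over $\cU(A)$ to get $a\in A'\cap\langle M,e_B\rangle$ with $\|a-e_B\|_{2,\Tr}\le\sqrt{2}\,\delta$, and cut by a spectral projection $q\in A'\cap\langle M,e_B\rangle$ of finite $\Tr$ close to $e_B$. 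Up to this point the argument is sound and is exactly the known skeleton.

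The genuine gap is the endgame, which you flag but do not resolve, and it is where the theorem actually lives. A plain comparison of the close projections $q$ and $e_B$ inside the semifinite algebra $\langle M,e_B\rangle$ produces a partial isometry $v$ with $v^*v\le q$ and $vv^*\le e_B$, but nothing forces $v^*v$ to commute with $A$; without that, $x\mapsto vxv^*$ is completely positive but not multiplicative, so your $\Phi_0$ is not yet a homomorphism, and "absorbing the correction by replacing $q$ suitably" is precisely the missing argument. In the actual proof this is handled structurally rather than by comparison: since $q\in A'\cap (JBJ)'$, the subspace $qL^2(M)$ is an $A$--$B$ bimodule whose right-$B$ dimension is $\Tr(q)<\infty$ and close to $1$; identifying it (after a cut-down) with $fL^2(B)$ for a projection $f\in B$ turns the left $A$-action into a $*$-homomorphism into $fBf$, and it is here that one must produce the projection $e\in A$ (using diffuseness of $A$ to adjust traces and to kill the central kernel) and verify $\|\Phi(exe)-x\|_2\le 80\sqrt{\delta}$, $\|1-e\|_2<2\sqrt{\delta}$ with the stated threshold $\delta<10^{-6}$. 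None of this bookkeeping is carried out in your proposal, so while the approach is the right one, what you have is an outline of Christensen's proof rather than a proof.
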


\begin{defn}
    Let $(M,\tau)$ be a tracial von Neumann algebra. We say that two von Neumann subalgebras $A,B\subset N$ are \emph{uniformly approximately unitarily equivalent (u.a.u.e.)} if for all $\eps>0$ there exists a unitary $u\in M$ such that $d(uAu^*,B) < \eps.$
\end{defn}

\begin{remark}
    If $N_1,N_2\subset N$ are u.a.u.e. type \twoone subfactors it does not follow that $N_1\cong N_2$; however, in light of Proposition \ref{cor-of-pss} it does imply  that there is a sequence $t_n\to 1$ such that $N_1^{t_n} \cong N_2.$ Moreover, these stable isomorphisms will be implemented by partial isometries in $N$ and $N_1,N_2$ will be mutually s-intertwining via surjective *-homomorphisms. (More precisely, for all projections $p_i \in N_i'\cap M$, we have $p_iN_i \prec_M N_{3-i}$ for $i=1,2,$ and the homomorphisms given by Theorem \ref{thm-popa-fundamental} can be taken to be surjective.)

    We recall that there are separable \twoone factors with prescribed countable fundamental group, see for instance \cite{Ho07,popa2010actions}. If the fundamental groups $\cF(N_1)$ and $\cF(N_2)$ are discrete subgroups of $\R_+$ then $N_1$ and $N_2$ being u.a.u.e will imply $N_1\cong N_2$. On the other hand, let $(N,\tau_N)$ be a \twoone factor with fundamental group $\Q^+$. Let $M$ be an isomorphic copy of $qNq$, where $\tau_N(q) = 2^{-1/2}$. Take positive numbers $r_n\in\Q\sqrt{2}$ which are increasing to $1$. Then there are projections $p_n\in N$ such that $\tau_N(p_n) = r_n$. Since the fundamental group of $N$ is $\Q^+,$ $p_nNp_n \cong M$ for all $n.$ By Lemma \ref{uauc-lem} we can inductively extend $N\overline{\otimes}M$ to $\tilde{N}$ to include unitaries $u_n$ such that $d(u_n(p_nNp_n)u_n^*,M) \to 0.$ But $d(p_nNp_n,N)\to 0$ too so that $d(u_nNu_n^*,M)\to 0.$ So we have constructed an algebra $\tilde{N}$ in which $N$ and $M$ are u.a.u.e. even though they are not isomorphic.
\end{remark}

We introduce one more definition before proving more general versions of Theorem \ref{main-(T)}. We call a family $(M_\lambda)_{\lambda\in\Lambda}$ of tracial von Neumann algebras an embedding universal inductive class of separable tracial von Neumann algebras if (1) it is closed under taking inductive limits and (2) for every separable tracial von Neumann algebra $N,$ there is an index $\lambda\in\Lambda$ such that $N$ embeds into $M_\lambda.$

\begin{thm}
    Let $(M,\tau)$ be a separable tracial von Neumann algebra. Then there is a separable tracial von Neumann algebra $N$ containing $M$ such that any two isomorphic diffuse property (T) von Neumann subalgebras of $N$ are u.a.u.e. We may furthermore take $N$ to belong to any embedding universal inductive class of separable tracial von Neumann algebras; e.g., we may take $N$ to be a \twoone factor or to have property Gamma. 
\end{thm}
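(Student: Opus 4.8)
The plan is to build $N$ as a \emph{countable} inductive limit $N=\overline{\bigcup_n M_n}^{SOT}$; the reason a countable induction suffices here — unlike in Theorem~\ref{intro main theorem A}, where the continuum many Haagerup subalgebras force a construction of length $2^{\aleph_0}$ — is precisely Proposition~\ref{rigid-dense}, which says the property (T) subalgebras of a separable algebra form a separable set for the metric $d$. Start with $M_0$ a separable tracial von Neumann algebra containing $M$. Having built $M_n$, fix a countable $d$-dense family $D_n$ of property (T) subalgebras of $M_n$ via Proposition~\ref{rigid-dense}, list all triples $(B_1,B_2,j)$ with $B_1,B_2\in D_n$ and $j\in\N$, and run a diagonal bookkeeping over all $n$ and all such triples. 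At the step assigned $(B_1,B_2,j)$: if $B_1$ and $B_2$ are isomorphic on corners of trace $\geq 1-\tfrac1j$, first use Lemma~\ref{corner-of-corner-lem} to replace these corners by ones of equal trace $\geq 1-\tfrac2j$, then apply Lemma~\ref{uauc-lem} — which passes through an HNN extension — to enlarge the algebra so that it contains a unitary $u$ with $d(uB_1u^\ast,B_2)\leq 5/\sqrt{j}$; otherwise do nothing. After each step re-embed into a separable member of $\cC$, and (so that Christensen's Theorem~\ref{christensen-thm} will be applicable below) also into a separable \twoone factor, so that both kinds of algebra are cofinal in the chain. Separability is preserved at each step and passes to the countable limit, and since $\cC$ is closed under inductive limits, $N\in\cC$; for $\cC$ the class of \twoone factors this makes $N$ a \twoone factor, while to force property Gamma one interleaves instead with the move $M_n\rightsquigarrow M_n\bar\otimes L\Z$, noting that a diagonal sequence of the resulting central generators is a nontrivial central sequence of $N$ (an increasing union of Gamma factors is Gamma).

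For the verification, let $N_1\cong N_2$ be diffuse property (T) subalgebras of $N$ and fix $\varepsilon>0$. Because $E_{M_n}\to\mathrm{id}$ pointwise, the unital subtracial completely positive maps $E_{N_i}E_{M_n}|_{N_i}\colon N_i\to N_i$ are, for $n$ large, close to the identity on a fixed finite set, hence — by property (T) of $N_i$ — uniformly close to the identity on $(N_i)_1$; a Pythagoras estimate in $L^2(N)$ (using $\|E_{M_n}(y)\|_2\geq\|E_{N_i}E_{M_n}(y)\|_2$) then converts this into $\|E_{M_n}(y)-y\|_2\leq\delta_0$ for all $y\in(N_i)_1$, with $\delta_0$ as small as we please. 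Fix such an $n$, a \twoone factor stage valid for $i=1,2$. By Theorem~\ref{christensen-thm} there are projections $e_i\in N_i$ near $1$ and unital homomorphisms $\Phi_i\colon e_iN_ie_i\to M_n$ with $\|\Phi_i(e_ixe_i)-x\|_2$ uniformly small on $(N_i)_1$. Transporting a large corner of $N_1$ across the isomorphism $N_1\cong N_2$ and intersecting with $e_1,e_2$ — after first cutting by a central projection so that the relevant corners are factors and the $\Phi_i$ are injective — produces projections $g_i$ near $1$ with $g_1N_1g_1\cong g_2N_2g_2$, so the completed images $R_i:=\Phi_i(g_iN_ig_i)\oplus\C(1-\Phi_i(g_i))\subset M_n$ are property (T) subalgebras that are isomorphic on corners of trace $\geq 1-c\delta_0$, and moreover $d(N_i,R_i)$ is controlled by the Christensen error. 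Picking $B_i\in D_n$ with $d(B_i,R_i)<\eta<\tfrac{1}{9522}$, Proposition~\ref{cor-of-pss} upgrades this to stable isomorphisms with corners of trace $\geq 1-2450\eta$, and transitivity of ``isomorphic on large corners'' shows $B_1,B_2$ are isomorphic on corners of trace $\geq 1-c'(\delta_0+\eta)$. Hence, by construction, there is $u\in N$ with $d(uB_1u^\ast,B_2)$ of order $\sqrt{\delta_0+\eta}$, and the triangle inequality
\[
d(uN_1u^\ast,N_2)\leq d(N_1,R_1)+d(R_1,B_1)+d(uB_1u^\ast,B_2)+d(B_2,R_2)+d(R_2,N_2)
\]
is $<\varepsilon$ once $\delta_0$ and $\eta$ are small enough; this is u.a.u.e. of $N_1$ and $N_2$.

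The main obstacle is the second paragraph: converting the abstract isomorphism $N_1\cong N_2$ into a genuine isomorphic-on-large-corners pair from the \emph{fixed} dense family $D_n$, chaining together the perturbative inputs (property (T) gives almost containment in $M_n$, Christensen's theorem gives corner embeddings into $M_n$, density of $D_n$ together with Proposition~\ref{cor-of-pss} gives the comparison with $D_n$), and keeping every corner trace and every $d$-distance in the triangle inequality controllably near their ideal values. A secondary but genuine nuisance is the non-factor case, where the homomorphisms $\Phi_i$ from Christensen's theorem need not be injective or trace-preserving, so one must reduce by central projections and then reassemble the unitary conjugations across the resulting direct-sum decompositions. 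By comparison, the bookkeeping of the countable induction and the verification that $N\in\cC$ are routine.
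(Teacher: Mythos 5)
Your proposal is correct and follows essentially the same route as the paper: a countable inductive limit in which, at each step, a pair from a $d$-dense family of property (T) subalgebras (Proposition \ref{rigid-dense}) that is isomorphic on large corners gets nearly conjugated via the HNN-based Lemma \ref{uauc-lem}, with the verification running through property (T) almost-containment in a stage $M_n$, Christensen's Theorem \ref{christensen-thm}, Proposition \ref{cor-of-pss}, Lemma \ref{corner-of-corner-lem}, and a final triangle inequality. Your refinements --- the separate accuracy index $j$ in the bookkeeping (which decouples the corner-trace threshold from the step index) and the explicit interleaving that handles the ``furthermore'' clause and keeps II$_1$-factor stages cofinal so that Theorem \ref{christensen-thm} applies --- are only tightenings of details that the paper's write-up leaves implicit.
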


\begin{proof}
    We take $M_1$ to equal $M$. By Proposition \ref{rigid-dense} enumerate a $d$-dense set  $(B_{m,1})_{m=1}^\infty$ of property (T) subalgebras of $M_1$. Fix a bijection $\sigma=(\sigma_1,\sigma_2,\sigma_3):\N\to\N^3$ such that $\sigma_3(n)\leq n.$ Define $\sigma_{13}(n) =(\sigma_1(n),\sigma_3(n))$ and similarly for $\sigma_{23}$.

    Suppose inductively that $M_1,\ldots,M_n$ have been constructed, along with $d$-dense sets of rigid subalgebras $(B_{m,k})\subset M_k$ for $m\in\N$ and $k=1,\ldots,n.$ If there exist projections $p_i\in B_{\sigma_{i3}(n)}$ such that $p_1B_{\sigma_{13}(n)}p_1 \cong p_2B_{\sigma_{23}(n)}p_2$ and $\tau(p_i) > 1 - \frac{1}{n}$ for $i=1,2$, using Proposition \ref{uauc-lem} define $M_{n+1}$ to be a \twoone factor containing $M_n$ and a unitary $u$ such that $d(uB_{\sigma_{13}(n)}u^*,B_{\sigma_{23}(n)})\leq 5\frac{1}{\sqrt{n}}$. Otherwise take $M_{n+1} = M_n.$

    Take $N$ to be the inductive limit of the $M_n.$ Fix $\eps>0.$ Let $N_1,N_2\subset N$ be two isomorphic diffuse property (T) subalgebras. By property (T) and Lemma \ref{stupid-T-lemma}, there exist $F_i\subset N_i$ finite subsets and $\delta>0$ such that if $\phi:N\to N$ is unital, tracial, and completely positive and $\|\phi(x)-x\|_2 < \delta$ for all $x\in F_i$ then $\|\phi(y)-y\|_2 < \eps$ for all $y\in (N_i)_1$. In particular, for all $n\in\N$ sufficiently large, $\|E_{M_n}(x)-x\|_2 < \delta$ for all $x\in F_i$ and so $\|E_{M_n}(y)-y\|_2 < \eps$ for all $y\in (N_i)_1,$ $i=1,2.$ We may in particular assume that $\frac{1}{\sqrt{n}} < 630\eps^{1/4}$. 

    By Theorem \ref{christensen-thm}, for $\eps$ sufficiently small, say $\eps < \frac{1}{2024!}$, and for $i=1,2$ there are projections $e_i\in N_i$ and $f_i\in M_n$ and *-homomorphisms $\Phi_i : e_iN_ie_i \to f_iM_nf_i$ such that $\|e_i-1\|_2 < 2\sqrt{\eps}$ and $\|\Phi_i(e_ixe_i)-x\|_2 \leq 80\sqrt{\eps}$ for all $x\in (N_i)_1.$ 

    Define $\Tilde{N_i} = \Phi_i(e_iN_ie_i)\oplus (1-f_i)\C$. Then $d(N_i,\Tilde{N_i}) \leq 80\sqrt{\eps}.$ By the $d$-density of the $B_{m,n}$ in the rigid subalgebras of $M_n$, we can find $n\leq k\in \N$ such that $\sigma_3(k) = n$ and for $i=1,2$, $d(\Tilde{N_i},B_{\sigma_{i3}(k)}) \leq \sqrt{\eps}$. Now Proposition \ref{cor-of-pss} implies that there are projections $p_i\in N_i$ and $q_i\in B_{\sigma_{i3}(k)}$ such that $p_iN_ip_i \cong q_iB_{\sigma_{i3}(k)}q_i$ and $\tau(p_i),\tau(q_i) \geq 1-198450\sqrt{\eps}$. By Lemma \ref{corner-of-corner-lem}, there are projections $s_i \leq p_i$ in $N_i$ such that $s_1N_1s_1$ is isomorphic to $s_2N_2s_2$ and $\tau(s_i) \geq 1-396900\sqrt{\eps}$ for $i=1,2$. In turn, this implies that there are projections $r_i\leq q_i$ in $B_{\sigma_{i3}(k)}$ such that $r_iB_{\sigma_{i3}(k)}r_i$ are isomorphic to $r_i$ for $i=1,2$ and $\tau(r_i) \geq 1-396900\sqrt{\eps}$. By construction, there is a unitary $u\in M_{k+1}\subset N$ such that $d(uB_{\sigma_{13}(k)}u^*,B_{\sigma_{23}(k)}) \leq \frac{5}{\sqrt{k}}\leq \frac{5}{\sqrt{n}} < 3150\eps^{1/4}$.

    Applying the triangle inequality repeatedly, we get that $d(uN_1u^*,N_2) \leq 3312\eps^{1/4}$. Since $\eps$ was chosen arbitrarily, we see that $N_1$ and $N_2$ are u.a.u.e.
\end{proof}

The above proof actually shows the following stronger statement, which moreover provides a case for why the conclusion is optimal (see also the Remark \ref{remark crazy}). 

\begin{thm}\label{stronger statement}
    Let $(M,\tau)$ be a separable tracial von Neumann algebra. Then there is a separable \twoone factor $N$ containing $M$ such that any two diffuse isomorphic property (T) von Neumann subalgebras $N_1$ and $N_2$ of $N$ satisfy that there exists a sequence of projections $p_{n,i}\to 1$ (where $p_{n,i}\in \mathcal{P}(N_i)$) such that $p_{n,1}N_1p_{n,1}\cong p_{n,2}N_2p_{n,2}$, are u.a.u.e. 
\end{thm}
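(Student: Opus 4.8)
The plan is to repeat verbatim the construction in the proof of the preceding theorem, and then observe that the only place where the hypothesis ``$N_1 \cong N_2$'' was actually used can be weakened to the stated stable-isomorphism-along-a-sequence hypothesis. Concretely, given the separable $M$, one builds the inductive limit $N = \overline{\bigcup_n M_n}^{SOT}$ exactly as before: start with $M_1 = M$, enumerate a $d$-dense set $(B_{m,1})_m$ of property (T) subalgebras via Proposition \ref{rigid-dense}, fix the bijection $\sigma = (\sigma_1,\sigma_2,\sigma_3) : \N \to \N^3$ with $\sigma_3(n) \le n$, and at stage $n$, whenever $B_{\sigma_{13}(n)}$ and $B_{\sigma_{23}(n)}$ admit projections $p_i$ of trace $> 1 - \tfrac1n$ with $p_1 B_{\sigma_{13}(n)} p_1 \cong p_2 B_{\sigma_{23}(n)} p_2$, use Lemma \ref{uauc-lem} to pass to a \twoone factor $M_{n+1} \supset M_n$ containing a unitary conjugating $B_{\sigma_{13}(n)}$ onto something $d$-close (within $5/\sqrt{n}$) to $B_{\sigma_{23}(n)}$. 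The new $N$ is literally the same algebra; the point is purely in the verification step.

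\textbf{The verification.} Fix $\eps > 0$ and diffuse property (T) subalgebras $N_1, N_2 \subset N$ and a sequence of projections $p_{n,i} \to 1$ in $\mathcal{P}(N_i)$ with $p_{n,1} N_1 p_{n,1} \cong p_{n,2} N_2 p_{n,2}$. As in the previous proof, property (T) of the $N_i$ forces $\|E_{M_k}(x) - x\|_2 < \eps$ uniformly on $(N_i)_1$ for all large $k$, so Theorem \ref{christensen-thm} produces projections $e_i \in N_i$, $f_i \in M_k$, and unital $*$-homomorphisms $\Phi_i : e_i N_i e_i \to f_i M_k f_i$ with $\|1 - e_i\|_2 < 2\sqrt{\eps}$ and $\|\Phi_i(e_i x e_i) - x\|_2 \le 80\sqrt{\eps}$ on $(N_i)_1$; setting $\tilde N_i = \Phi_i(e_i N_i e_i) \oplus (1 - f_i)\C$ gives $d(N_i, \tilde N_i) \le 80\sqrt{\eps}$, and by $d$-density we find $k' \ge k$ with $\sigma_3(k') = k$ and $d(\tilde N_i, B_{\sigma_{i3}(k')}) \le \sqrt{\eps}$. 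Now the crux: to trigger the construction at stage $k'$ we must know that $B_{\sigma_{13}(k')}$ and $B_{\sigma_{23}(k')}$ have large isomorphic corners. Pick $n$ large enough that $\tau(p_{n,i}) \ge 1 - \eps$ for $i = 1, 2$. Since $p_{n,1} N_1 p_{n,1} \cong p_{n,2} N_2 p_{n,2}$, these two corners are isomorphic, hence $d(p_{n,1} N_1 p_{n,1}, p_{n,2} N_2 p_{n,2})$ is controlled (cutting by a projection of trace $\ge 1 - \eps$ changes a subalgebra by at most $3\sqrt{\eps}$ in $d$, as in Lemma \ref{uauc-lem}), so chaining through $\tilde N_i$ and $B_{\sigma_{i3}(k')}$ we get that the $B_{\sigma_{i3}(k')}$ themselves have large ($\ge 1 - C\sqrt{\eps}$, for an explicit constant $C$) isomorphic corners via Proposition \ref{cor-of-pss} and Lemma \ref{corner-of-corner-lem}. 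For $\eps$ small (so that $C\sqrt{\eps} < 1/k'$, which we may arrange by enlarging $k'$ along the $d$-dense enumeration, and so that all the Christensen/PSS thresholds are met) the construction did fire at stage $k'$, yielding $u \in M_{k'+1} \subset N$ with $d(u B_{\sigma_{13}(k')} u^*, B_{\sigma_{23}(k')}) \le 5/\sqrt{k'}$.

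\textbf{Conclusion.} A final application of the triangle inequality for $d$ along the chain
\[
N_1 \;\longrightarrow\; \tilde N_1 \;\longrightarrow\; B_{\sigma_{13}(k')} \;\xrightarrow{\;u\;}\; B_{\sigma_{23}(k')} \;\longrightarrow\; \tilde N_2 \;\longrightarrow\; N_2
\]
(using $d(u A u^*, u B u^*) = d(A,B)$ and absorbing the conjugation of $N_1$ into the single unitary $u$) gives $d(u N_1 u^*, N_2) \le C' \eps^{1/4}$ for an absolute constant $C'$. As $\eps > 0$ was arbitrary, $N_1$ and $N_2$ are u.a.u.e., and exactly as in the previous theorem $N$ may be taken inside any embedding-universal inductive class (e.g.\ with property Gamma), since each step in the construction is an HNN extension and inductive limits stay in such a class.

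\textbf{Main obstacle.} The delicate point is bookkeeping the constants: one must choose $k$ (for the Christensen step), then $k' \ge k$ (for the $d$-density approximation), and then ensure $C\sqrt{\eps} < 1/k'$ so that the stage-$k'$ condition ``$\tau(p_i) > 1/k'$'' is met by the corners coming from $p_{n,i}$ — but $k'$ depends on the enumeration and could be forced large, so one has to instead run the $d$-density search over the \emph{infinitely many} indices $k'$ with $\sigma_3(k') = k$ and pick one with $1/k'$ small enough, which is possible precisely because $\sigma$ is a bijection onto $\N^3$ and hence hits each such fiber infinitely often. Once this interleaving of quantifiers is set up correctly, every individual estimate is routine and identical to the previous proof; no genuinely new von Neumann algebraic input is needed beyond noticing that Lemma \ref{uauc-lem} only ever consumed an isomorphism of corners, not of the whole algebras.
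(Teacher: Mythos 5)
Your overall route is the paper's: the paper proves this theorem simply by remarking that the proof of the preceding theorem only ever used the isomorphism $N_1\cong N_2$ to manufacture large isomorphic corners of the approximants, and your proposal makes exactly that observation, supplying the corners instead from the hypothesis $p_{n,1}N_1p_{n,1}\cong p_{n,2}N_2p_{n,2}$ with $\tau(p_{n,i})\to 1$ via one extra pass through Proposition \ref{cor-of-pss} and Lemma \ref{corner-of-corner-lem}. That observation is correct and is the whole content of the paper's proof of this statement.

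However, the step you yourself single out as the crux is resolved backwards. For the construction to have fired at stage $k'$, the pair $B_{\sigma_{13}(k')},B_{\sigma_{23}(k')}$ must admit isomorphic corners of traces exceeding the stage threshold $1-\tfrac{1}{k'}$ (the construction's condition must be read this way; read literally as ``$\tau(p_i)>1/k'$'' it is vacuous and Lemma \ref{uauc-lem} would then only give the useless bound $5\sqrt{1-1/k'}$). The corners you produce have traces $\geq 1-C\sqrt{\eps}$ with $C$ fixed, so the firing condition is met only if $C\sqrt{\eps}<1/k'$, i.e.\ only if $k'$ is \emph{small}, at most roughly $(C\sqrt{\eps})^{-1}$; your prescription ``enlarge $k'$ along the enumeration / pick $1/k'$ small enough'' makes the condition strictly harder to satisfy, not easier. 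Moreover, moving within the fiber $\{k':\sigma_3(k')=k\}$ changes $(\sigma_1(k'),\sigma_2(k'))$, i.e.\ changes which pair of dense-family algebras is conjugated at that stage, and the $d$-dense family need not contain more than one member $\sqrt{\eps}$-close to a given $\tilde N_i$; so you cannot simultaneously keep $B_{\sigma_{i3}(k')}$ close to $\tilde N_i$ and tune $k'$. As written, the assertion ``the construction did fire at stage $k'$'' is therefore not justified. The clean repair is to decouple the trace threshold from the stage index: for instance, enumerate quadruples (two dense-set indices, a level $n$, a threshold index $l$) so that every pair is revisited with every threshold $1-1/l$, firing then yielding $d\leq 5/\sqrt{l}$, and in the verification choose $l\approx(396900\sqrt{\eps})^{-1}$ — this is consistent with the constants $630=\sqrt{396900}$ and $3150=5\cdot 630$ appearing in the paper's verification; alternatively, at each stage conjugate using corners nearly realizing the supremum of achievable traces. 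With that bookkeeping corrected, your argument coincides with the paper's.
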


\begin{remark}\label{remark crazy}
    We note that if $M$ is a \twoone factor with property (T) then $M\mathbin{\bar{\otimes}} R$ has full fundamental group, and therefore $M^t \subset (M\mathbin{\bar{\otimes}} R)^t\cong M\mathbin{\bar{\otimes}}R$ for all $t\in \R_+$. In particular, $M\mathbin{\bar{\otimes}} R$ contains continuum many non-isomorphic, stably isomorphic property (T) subfactors since the fundamental group of $M$ is countable. We contrast this remark with the fact that there are only countably many irreducible subfactors with (T) (see \cite{PopaCorr}). This suggests that it is not likely possible to be able to exactly conjugate all isomorphic property (T) subalgebras inside of a separable \twoone factor, since our inductive limit arguments would require passing to the non-separable setting.
\end{remark}

\subsection*{Miscellaneous results} 


The following is well known and is a folklore fact. 
\begin{lem}\label{axiom-factor}
    Let $(M, \tau_M)$ be a tracial von Neumann algebra. Then $M$ is a factor if and only if for every $x \in M$ and $\eps > 0$, there exists $y \in (M)_1$ such that $\|x - \tau(x)\|_2 < \|[x, y]\|_2 + \eps$.
\end{lem}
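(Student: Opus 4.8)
\textbf{Proof proposal for Lemma \ref{axiom-factor}.}

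The plan is to prove the two implications separately. For the easy direction, suppose $M$ is \emph{not} a factor, so there is a nontrivial central projection $z \in \mathcal{Z}(M)$. Take $x = z$; then $\tau(x) = \tau(z) =: t \in (0,1)$ and $\|x - \tau(x)\|_2 = \sqrt{t - t^2}$. On the other hand, for any $y \in (M)_1$ we have $[z, y] = 0$ since $z$ is central, so $\|[x,y]\|_2 = 0$. Choosing $\eps < \sqrt{t-t^2}$ gives an element $x$ violating the stated condition. Hence the condition implies $M$ is a factor.

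For the converse, assume $M$ is a factor and fix $x \in M$ and $\eps > 0$; without loss of generality $\tau(x) = 0$ (replace $x$ by $x - \tau(x)$, which changes neither $\|x - \tau(x)\|_2$ nor the commutators $[x,y]$). We must produce $y \in (M)_1$ with $\|x\|_2 < \|[x,y]\|_2 + \eps$. The natural approach is: if no such $y$ existed, then $\|[x,y]\|_2 \leq \|x\|_2 - \eps$ for \emph{all} $y \in (M)_1$, which is a strong near-commutation statement. The idea is to feed this into an averaging/Dixmier-type argument. Specifically, the norm-closed convex hull $K$ of $\{uxu^* : u \in \mathcal{U}(M)\}$ (closure in $\|\cdot\|_2$) is a closed convex subset of $L^2(M)$; since $M$ is a factor, the Dixmier averaging theorem gives that $K$ contains $\tau(x) = 0$, i.e. $0 \in K$. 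So there are unitaries $u_1, \dots, u_n$ and convex weights $\lambda_j \geq 0$, $\sum \lambda_j = 1$, with $\|\sum_j \lambda_j u_j x u_j^*\|_2 < \eps/2$ (say).

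Now I would extract the contradiction. Writing $u_j x u_j^* = x + u_j[x, u_j^*]$ — more precisely $u_j x u_j^* - x = u_j x u_j^* - u_j u_j^* x = u_j(xu_j^* - u_j^* x) = u_j[x, u_j^*]$ — we get
\begin{equation*}
\Big\| \sum_j \lambda_j u_j x u_j^* - x \Big\|_2 \;=\; \Big\| \sum_j \lambda_j u_j [x, u_j^*] \Big\|_2 \;\leq\; \sum_j \lambda_j \|u_j[x,u_j^*]\|_2 \;=\; \sum_j \lambda_j \|[x, u_j^*]\|_2,
\end{equation*}
using that $u_j$ is a unitary so left multiplication is a $\|\cdot\|_2$-isometry. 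Combining with $\|\sum_j \lambda_j u_j x u_j^*\|_2 < \eps/2$ and the triangle inequality yields $\|x\|_2 < \eps/2 + \sum_j \lambda_j \|[x,u_j^*]\|_2 \leq \eps/2 + \max_j \|[x, u_j^*]\|_2$. Since each $u_j^* \in (M)_1$, this produces a single $y = u_j^*$ with $\|x\|_2 < \|[x,y]\|_2 + \eps$, completing the proof (after renaming $\eps/2$ to $\eps$, or just absorbing constants). The main obstacle, such as it is, is being careful that the Dixmier averaging is applied in the $\|\cdot\|_2$-topology rather than the operator norm — this is standard (the norm-$\|\cdot\|_2$ closed convex hull of the unitary orbit of a self-adjoint, hence of a general, element meets the scalars at $\tau(x)$ in a factor), but one should cite it or note that it follows from the operator-norm Dixmier theorem by density. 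One should also double-check the reduction to $\tau(x)=0$: the condition to be proved is stated in terms of $\|x - \tau(x)\|_2$, so after the reduction the target inequality reads $\|x\|_2 < \|[x,y]\|_2 + \eps$, which is exactly what the averaging argument delivers.
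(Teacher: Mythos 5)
Your proof is correct, and in fact the paper offers no argument at all for this lemma (it is stated as a ``folklore fact''), so there is nothing to compare it against. Your route is the standard one: the easy direction via a nontrivial central projection is fine, and in the converse the Dixmier averaging theorem in a tracial factor does give $\tau(x)1$ as the unique central element of the operator-norm closed convex hull of the unitary orbit (uniqueness because the trace is constant on that hull), and since $\|\cdot\|_2\le\|\cdot\|_\infty$ the passage to the $2$-norm is harmless; the identity $u_jxu_j^*-x=u_j[x,u_j^*]$, the $2$-norm isometry of left multiplication by unitaries, and the choice $y=u_j^*$ with $\|u_j^*\|_\infty=1$ then close the argument exactly as you wrote.
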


The proof of the following  proposition was suggested to us by D. Jekel and B. Hayes. We thank them profusely for allowing us to include it here. 

\begin{prop}\label{factor-lifting}
    Let $\{x_1, \cdots, x_n\} \subset (M)_1$ be a finite set of elements of operator norm at most $1$ in a \twoone factor $M$. Then for any $\eps > 0$, there exists $\{y_1, \cdots, y_n\} \subset (M)_1$, elements of operator norm at most $1$, such that $\{y_1, \cdots, y_n\}$ generates a subfactor of $M$ and furthermore $\|y_i - x_i\|_2 < \eps$ for all $i$.
\end{prop}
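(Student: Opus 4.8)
The goal is to perturb a finite tuple $\{x_1,\dots,x_n\}$ in the unit ball of a II$_1$ factor $M$ to a nearby tuple $\{y_1,\dots,y_n\}$ (still in the unit ball, still close in $\|\cdot\|_2$) that generates a subfactor. The strategy I would use is to add a single auxiliary generator that is, on one hand, close to one of the $x_i$ (or to $0$, after a small adjustment), and on the other hand forces the generated algebra to have trivial center via the factoriality criterion recorded in Lemma \ref{axiom-factor}. Concretely: since $M$ is a II$_1$ factor it contains a Haar unitary $u$, and more usefully a copy of the hyperfinite II$_1$ factor $R$; one should try to ``implant'' a tiny piece of such a factor near the data.

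\textbf{Key steps.} First, I would reduce to the problem of finding, for each $\delta>0$, an element $z\in (M)_1$ with $\|z\|_2<\delta$ such that $\{x_1,\dots,x_n,z\}$ generates a subfactor: once we have that, absorb $z$ into a perturbation $y_1 = $ (a suitable contraction combining $x_1$ and $z$) or, more cleanly, note that the von Neumann algebra generated by $\{x_1,\dots,x_n,z\}$ is unchanged under replacing $z$ by $z' = \sqrt{1-\eta^2}\,x_1 + \eta z$ for small $\eta$ if we arrange the generators carefully — actually the simplest route is to first enlarge the tuple to $n+1$ elements, build the subfactor there, and then re-encode: replace the pair $(x_1, z)$ by a single contraction whose generated algebra contains both, e.g. use that $a+ib$ with $a,b$ self-adjoint contractions of norm $\le 1/2$ recovers $a$ and $b$. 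So the real content is the one-extra-generator statement. For that, pick inside $M$ a diffuse abelian subalgebra $D$ that is \emph{free} from $W^*(x_1,\dots,x_n)$ — such $D$ exists in any II$_1$ factor by Popa's freeness results (as invoked in the proof of Theorem \ref{totally main theorem}), or one can simply take a subfactor; then a generic small-norm element $z$ drawn from (a corner of) $D$ together with the $x_i$'s will, via freeness, satisfy the commutator estimate of Lemma \ref{axiom-factor}: for any $x$ in the generated algebra and any $\varepsilon>0$ one produces $y\in(M)_1$ from the free complement with $\|[x,y]\|_2$ close to $\|x-\tau(x)\|_2$, because free elements behave like ``independent'' ones and a Haar unitary $v$ from the free part satisfies $\|[x,v]\|_2^2 = 2\|x-\tau(x)\|_2^2$ (using $\tau(vxv^*x^*)=|\tau(x)|^2$ by freeness). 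Thus $W^*(x_1,\dots,x_n,z)$ is a factor.

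\textbf{Alternative cleaner route.} Rather than freeness, one can use the following soft argument, which I expect is the intended one: in a II$_1$ factor $M$, fix a Haar unitary $v$ that is free from $\{x_1,\dots,x_n\}$ and set $y_i = \sqrt{1-t^2}\,x_i + t\,h_i$ where $h_i$ ranges over self-adjoint contractions built from $v$ (say $h_1 = \Real(v)$, $h_2 = \Img(v)$, rest $0$); for $t$ small this keeps $\|y_i-x_i\|_2 < \varepsilon$ and $\|y_i\|_\infty \le 1$, while $W^*(y_1,\dots,y_n) \ni v$ (recover $v$ from $h_1,h_2$, hence strip it off the $y_i$ to recover the $x_i$ — some care: one needs $W^*(y_1,\dots,y_n)$ to contain $h_1,h_2$, which holds once it contains $v$; circularity is broken by noting $y_1$ already generates a diffuse algebra and spectral calculus on $y_1 + iy_2$ can be arranged to recover $h_1,h_2$ when the $h_i$ and $x_i$ live in ``independent directions''). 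Then apply Lemma \ref{axiom-factor}: given $x \in W^*(y_1,\dots,y_n) \subset M$ and $\varepsilon>0$, use $y = v^k$ for large $k$; by freeness of $v$ from $\{x_1,\dots,x_n\}$ (hence asymptotic freeness estimates) $\|[x,v^k]\|_2 \to \sqrt{2}\,\|x-\tau(x)\|_2 > \|x-\tau(x)\|_2$ — wait, we need $\|[x,y]\|_2 + \varepsilon > \|x-\tau(x)\|_2$, and $\sqrt 2 > 1$, so this is satisfied with room to spare. Hence the generated algebra is a factor.

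\textbf{Main obstacle.} The delicate point is the bookkeeping that recovers the original $x_i$ from the perturbed $y_i$ inside $W^*(y_1,\dots,y_n)$ — i.e., ensuring the auxiliary free element is genuinely \emph{in} the generated von Neumann algebra and can be subtracted off, so that $W^*(y_1,\dots,y_n) \supseteq W^*(x_1,\dots,x_n) \vee W^*(v)$, not something smaller. This is what makes the commutator/Lemma \ref{axiom-factor} argument applicable to \emph{every} element of the generated algebra. I would handle it by not trying to keep the number of generators equal at the intermediate stage: prove the statement for the tuple $\{x_1,\dots,x_n, v\}$ with $v$ a Haar unitary free from the $x_i$ (here $W^*$ of the enlarged tuple obviously contains $v$ and all $x_i$, and factoriality is immediate from Lemma \ref{axiom-factor} with $y = v^k$), and then contract back to $n$ generators using that a self-adjoint contraction of norm $<1$ can encode a second self-adjoint contraction in, say, the spectral projections of its values near $1$ — or even more simply, observe the statement is only about \emph{existence} of an $n$-tuple, and re-express the $(n+1)$-tuple $(x_1,\dots,x_n,v)$ as an $n$-tuple by absorbing $v$ into $x_n$ via $y_n = (1-s)x_n + s\,v'$ where $v'$ is a small perturbation chosen so that spectral calculus on $y_n$ separates the $x_n$-part from the $v'$-part (possible generically since $x_n$ and $v$ are $*$-free, so $y_n$ has no ``accidental'' spectral coincidences). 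The rest — norm bounds $\|y_i\|_\infty \le 1$ and $\|y_i - x_i\|_2 < \varepsilon$ — is routine by choosing $s,t$ small.
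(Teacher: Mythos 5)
There is a genuine gap, and it occurs at the very first step: you repeatedly assume that inside $M$ itself one can find a Haar unitary $v$ (or a diffuse abelian $D$) that is $*$-free from $W^*(x_1,\dots,x_n)$. Popa's freeness theorem \cite{Popaindep} — the result invoked in the proof of Theorem \ref{totally main theorem} — produces such an element only in the ultrapower $M^\cU$, not in $M$; indeed in that proof $v$ lives in $M^\cU$ and only \emph{orthogonality}, not freeness, is lifted back to $M$. In general no such $v$ exists in $M$: if $W^*(x_1,\dots,x_n)=M$, an element of $M$ free from $M$ would be free from itself. Once you are forced into $M^\cU$, your argument produces a subfactor of $M^\cU$, not of $M$, and the statement requires $y_i\in M$. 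Passing back to $M$ is precisely the nontrivial content: the paper's proof does this by observing that factoriality is captured by countably many \emph{open} conditions on the tuple (Lemma \ref{axiom-factor}, quantified over rational $*$-polynomials $p$ and $N\in\N$, with the witnessing $q$ taken of universal $C^*$-norm at most $1$), showing each such condition holds on a $\|\cdot\|_2$-dense open subset of $(M)_1^n$ by perturbing with free semicirculars from $M^\cU$ and lifting, and then applying the Baire category theorem to get a single tuple in $M$ satisfying all conditions at once. Nothing in your proposal plays this role, so even granting the free element you have no mechanism to land the perturbed tuple inside $M$.

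The second gap is the ``recover the free part and subtract it off'' step, which you yourself flag as the main obstacle but do not resolve: it is not true in general that $W^*\bigl(\sqrt{1-t^2}\,x_1+t h_1,\dots\bigr)$ contains $h_1$ or $x_1$, and the suggested fixes (spectral calculus on $y_1+iy_2$, ``no accidental spectral coincidences'' for generic $s$) are not arguments. The known proof avoids this recovery entirely: it keeps $n$ generators of the form $z_i=v_i+\eps s_i$ with $s_1,\dots,s_n$ free semicirculars free from the $v_i$, and invokes Dabrowski's theorem \cite{dabrowskifullff} that $W^*(z_1,\dots,z_n)$ is a factor — a genuinely nontrivial input (finite free Fisher information), not something obtainable by the commutator estimate $\|[x,v^k]\|_2^2=2\|x-\tau(x)\|_2^2$ alone, since that estimate only applies to $x$ in the algebra the free element is free from, not to arbitrary elements of the perturbed algebra. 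Your enlarged-tuple observation (that $W^*(x_1,\dots,x_n,v)$ is a factor when $v$ is a free Haar unitary) is fine as far as it goes, but the contraction back to $n$ generators while keeping $\|y_i-x_i\|_2<\eps$ and $\|y_i\|_\infty\le 1$ is exactly the unproved encoding step, so the proposal does not yield the proposition as stated.
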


\begin{proof} Let $A = C^\ast(T_1, \cdots, T_n)$ be the universal $C^\ast$-algebra generated by $n$ contractions $T_1, \cdots, T_n$. For each positive integer $N > 0$ and for each $\ast$-polynomial in $n$ variables with rational coefficients $p$, we consider the following subset of $(M)_1^n$,
\begin{equation*}
\begin{split}
    G_{N, p} = \{(z_1, &\cdots, z_n) \in (M)_1^n: \, \exists \, \ast\textrm{-polynomial in}\, n \, \textrm{variables} \, q \, \textrm{such that} \, \|q\|_A \leq 1\\
    &\textrm{and} \, \|p(z_1, \cdots, z_n) - \tau(p(z_1, \cdots, z_n))\|_2 < \|[p(z_1, \cdots, z_n), q(z_1, \cdots, z_n)]\|_2 + \frac{1}{N}\}
\end{split}
\end{equation*}

We observe that the above set is open in $(M)_1^n$ in the 2-norm topology. Indeed, for each fixed $\ast$-polynomial $q$ in $n$ variables with $\|q\|_A \leq 1$, the set
\begin{equation*}
\begin{split}
    G_{N, p, q} = \{(z_1, \cdots&, z_n) \in (M)_1^n:\\
    &\|p(z_1, \cdots, z_n) - \tau(p(z_1, \cdots, z_n))\|_2 < \|[p(z_1, \cdots, z_n), q(z_1, \cdots, z_n)]\|_2 + \frac{1}{N}\}
\end{split}
\end{equation*}
is clearly open, and $G_{N, p}$ is the union of $G_{N, p, q}$ ranging over all such $q$. We now claim that $G_{N, p}$ is 2-norm dense in $(M)_1^n$ for each $N$ and $p$. Indeed, it suffices to show, for any $(v_1, \cdots, v_n) \in M^n$ with the operator norms of all $v_i$ strictly smaller than $1$, and any $\eps > 0$, there exists $(w_1, \cdots, w_n) \in G_{N, p}$ with $\|w_i - v_i\|_2 \leq \eps$ for all $i$. Let $\cU$ be a free ultrafilter on $\N$. As $v_1, \cdots v_n \in M \subset M^\cU$, by \cite{Popaindep} there exists a family of free semicirculars $s_1, \cdots, s_n \in M^\cU$ free from $v_1, \cdots v_n$. By \cite{dabrowskifullff}, for $\eps$ sufficiently small, $Q = W^\ast(v_1 + \eps s_1, \cdots, v_n + \eps s_n)$ is a factor. Furthermore, again by taking $\eps$ sufficiently small, we may assume $\|v_i\| + \eps \leq 1$. Write $z_i = v_i + \eps s_i$.

Since $Q$ is a factor, by the Kaplansky density theorem, Proposition II.5.1.5 in \cite{Bl06}, and Lemma \ref{axiom-factor}, there exists a $\ast$-polynomial $q$ in $n$ variables such that $$\|p(z_1, \cdots, z_n) - \tau(p(z_1, \cdots, z_n))\|_2 < \|[p(z_1, \cdots, z_n), q(z_1, \cdots, z_n)]\|_2 + \frac{1}{N}$$ and furthermore $\|q(z_1, \cdots, z_n)\|_\infty\leq\|q\|_A \leq 1$ (see also the argument in the proof of Lemma 2.2 in \cite{JP23} and Lemma 2.3 in \cite{Hayes2018}). Now, lift each $s_i$ to a sequence of elements $(s_{ij})_{j \to \cU}$ in $M$ such that $\|s_{ij}\|_\infty \leq 1$ for all $i$ and $j$. Write $z_{ij} = v_i + \eps s_{ij}$. Then $\|z_{ij}\|_\infty \leq \|v_i\|_\infty + \eps \leq 1$ and $\|z_{ij} - v_i\|_2 \leq \eps$. Furthermore, $(z_{ij})_{j \to \cU}$ is a lift of $z_i$. Thus,
\begin{equation*}
\begin{split}
    \lim_{j \to \cU} (&\|p(z_{1j}, \cdots, z_{nj}) - \tau(p(z_{1j}, \cdots, z_{nj}))\|_2 - \|[p(z_{1j}, \cdots, z_{nj}), q(z_{1j}, \cdots, z_{nj})]\|_2)\\
    = &\|p(z_1, \cdots, z_n) - \tau(p(z_1, \cdots, z_n))\|_2 - \|[p(z_1, \cdots, z_n), q(z_1, \cdots, z_n)]\|_2\\
    < &\frac{1}{N}
\end{split}
\end{equation*}

So there exists $S \in \cU$ such that whenever $j \in S$, we have $\|p(z_{1j}, \cdots, z_{nj}) - \tau(p(z_{1j}, \cdots, z_{nj}))\|_2 < \|[p(z_{1j}, \cdots, z_{nj}), q(z_{1j}, \cdots, z_{nj})]\|_2 + \frac{1}{N}$. Setting $w_i = z_{ij}$ for any $j \in S$ proves the claim.

Since we are restricting to $\ast$-polynomials with rational coefficients $p$, there are only countably many such $p$. Hence, by Baire category theorem, the set
\begin{equation*}
    G = \bigcap_{\begin{matrix}N \in \N_+ \\
    p \, \ast\textrm{-polynomial with rational coefficients}\end{matrix}} G_{N, p}
\end{equation*}
is dense in $(M)_1^n$. In particular, there exists $(y_1, \cdots, y_n) \in G$ such that $\|x_i - y_i\|_2 < \eps$ for all $i$. We claim that $P = W^\ast(y_1, \cdots, y_n)$ is a factor. Indeed, for any $x \in P$ and $\eps > 0$, there exists a $\ast$-polynomial $p$ in $n$ variables with rational coefficients such that $\|x - p(y_1, \cdots, y_n)\|_2 < \frac{\eps}{5}$. Let $N > 0$ be a positive integer such that $\frac{1}{N} \leq \frac{\eps}{5}$. Then as $(y_1, \cdots, y_n) \in G \subset G_{N, p}$, there exists a $\ast$-polynomial $q$ in $n$ variables with $\|q\|_A \leq 1$ such that
\begin{equation*}
\begin{split}
    \|x - \tau(x)\|_2 &\leq \|p(y_1, \cdots, y_n) - \tau(p(y_1, \cdots, y_n))\|_2 + \frac{2\eps}{5}\\
    &< \|[p(y_1, \cdots, y_n), q(y_1, \cdots, y_n)]\|_2 + \frac{1}{N} + \frac{2\eps}{5}\\
    &\leq \|[x, q(y_1, \cdots, y_n)]\|_2 + \frac{2\|q(y_1, \cdots, y_n)\|_\infty\eps}{5} + \frac{3\eps}{5}
\end{split}
\end{equation*}

Since $\|y_i\|_\infty \leq 1$ for all $i$, as $\|q\|_A \leq 1$, we have $\|q(y_1, \cdots, y_n)\|_\infty \leq 1$, so the above yields that, for any $x \in P$ and $\eps > 0$, there exists $y \in (M)_1$, namely $y = q(y_1, \cdots, y_n)$, such that $\|x - \tau(x)\|_2 < \|[x, q(y_1, \cdots, y_n)]\|_2 + \eps$. Thus, $P$ is a factor by Lemma \ref{axiom-factor}.
\end{proof}

\begin{defn}
Let $(N, \tau_N)$, $(M, \tau_M)$ be separable \twoone factors, $\{x_i\}_{i \in I} \subset N$ be a countable set of elements of operator norm at most $1$ that generates $N$, and $\{y_j\}_{j=1}^n \subset M$ be a finite set of elements of operator norm at most $1$ that generates $M$. Let $I_0 \subset I$ be a finite subset, $\eps > 0$, and $m > 0$ be a positive integer. Then we say $(M, \tau_M, \{y_j\})$ is in the $(I_0, \eps, m)$\textit{-neighborhood} of $(N, \tau_N, \{x_i\})$, denoted by $(M, \{y_j\}) \in \Theta(N, I_0, \eps, m)$, if there exists an injective map $\sigma: \{1, \cdots, n\} \to I$ whose range contains $I_0$, such that,
\begin{equation*}
    |\tau_M(p(y_1, \cdots, y_n)) - \tau_N(p(x_{\sigma(1)}, \cdots, x_{\sigma(n)}))| < \eps
\end{equation*}
for all $\ast$-monomials $p$ in $n$ variables of degree less than or equal to $m$. If we need to specify the correspondence between generators, i.e., the injective map $\sigma$, we shall write $(M, \{y_j\}, \sigma) \in \Theta(N, I_0, \eps, m)$. 

If, for $(M, \tau_M)$, there exists $\{y_j\}_{j=1}^n \subset M$, a finite set of elements of operator norm at most $1$ that generates $M$, such that $(M, \{y_j\}) \in \Theta(N, I_0, \eps, n)$, we then say $(M, \tau_M)$ is in the $(I_0, \eps, m)$\textit{-neighborhood} of $(N, \tau_N, \{x_i\})$, denoted by $M \in \Theta(N, I_0, \eps, n)$.

If $I$ is finite and $I_0 = I$, we simply write $\Theta(N, \eps, m)$ in place of $\Theta(N, I_0, \eps, m)$.
\end{defn}

\begin{prop}\label{gamma-prop}
    Let $N$, $M$ be \twoone factors, $\{x_i\}_{i \in I} \subset N$ be a countable set of elements of operator norm at most $1$ that generates $N$, $\mathcal{U}$ be a countably incomplete ultrafilter on an index set $J$, then the following are equivalent:
    \begin{enumerate}
        \item For any countably incomplete ultrafilter $\mathcal{V}$ on any index set, for any embedding $\pi: N \to M^\mathcal{V}$, the relative commutant $\pi(N)' \cap M^\mathcal{V}$ contains a trace-zero unitary;
        \item For any embedding $\pi: N \to M^\mathcal{U}$, the relative commutant $\pi(N)' \cap M^\mathcal{U}$ contains a trace-zero unitary;
        \item For any $\eps > 0$ and finite subset $I_0 \subset I$, there exists a finite subset $I' \subset I$ containing $I_0$, $\delta > 0$, and a positive integer $n > 0$ such that whenever $(N_0, \{y_j\}, \sigma) \in \Theta(N, I', \delta, n)$ and $\pi: N_0 \to M$ is an embedding, then there exists a unitary $w \in M$ of trace zero such that $\|[y_{\sigma^{-1}(i)}, w]\|_2 \leq \eps$ for all $i \in I_0$.
    \end{enumerate}
\end{prop}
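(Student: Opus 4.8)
The content of the proposition is that the ``ultrapower'' property appearing in $(1)$ and $(2)$ is in fact a \emph{local} property of the pair $(N,M)$, expressed by the finitary conditions of $(3)$. The plan is to prove the cycle $(1)\Rightarrow(2)\Rightarrow(3)\Rightarrow(1)$. The implication $(1)\Rightarrow(2)$ is immediate, since the $\cU$ in $(2)$ is by hypothesis countably incomplete, hence is one of the ultrafilters $\cV$ quantified over in $(1)$.

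For $(2)\Rightarrow(3)$ I would argue by contraposition. If $(3)$ fails, fix the witnessing $\eps>0$ and finite $I_0\subset I$; enumerate $I=\{i_1,i_2,\dots\}$ with $I_0=\{i_1,\dots,i_{k_0}\}$. For each $m\ge k_0$, applying the negation of $(3)$ with $I'=\{i_1,\dots,i_m\}$, $\delta=1/m$ and degree bound $n=m$ yields a separable \twoone factor $N^{(m)}_0$ with generating tuple $\{y^{(m)}_\ell\}$, an injection $\sigma_m$ with $(N^{(m)}_0,\{y^{(m)}_\ell\},\sigma_m)\in\Theta(N,I',1/m,m)$, and an embedding $\rho_m\colon N^{(m)}_0\to M$, such that no trace-zero unitary $w\in M$ satisfies $\|[\rho_m(y^{(m)}_{\sigma_m^{-1}(i_\ell)}),w]\|_2\le\eps$ for all $\ell\le k_0$. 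Now use countable incompleteness of $\cU$: fix a decreasing chain $J=A_0\supseteq A_1\supseteq\cdots$ in $\cU$ with $\bigcap_m A_m=\emptyset$ and set $\mu(j)=\max\{m : j\in A_m\}$, so that $\mu(j)\to_\cU\infty$. For $i_\ell\in I$ put $z^{(j)}_\ell=\rho_{\mu(j)}(y^{(\mu(j))}_{\sigma_{\mu(j)}^{-1}(i_\ell)})\in(M)_1$ whenever $\mu(j)\ge\ell$, and $z^{(j)}_\ell=0$ otherwise. Since the $\Theta$-data forces the $\ast$-moments of $(z^{(j)}_\ell)_\ell$ to converge along $\cU$ to the corresponding $\ast$-moments of $(x_{i_\ell})_\ell$ in $N$, the assignment $x_{i_\ell}\mapsto (z^{(j)}_\ell)_{j\to\cU}$ extends to a trace-preserving (hence injective) $\ast$-homomorphism $\Pi\colon N\to M^\cU$. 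By $(2)$, $\Pi(N)'\cap M^\cU$ contains a trace-zero unitary $w=(w^{(j)})_{j\to\cU}$. Choosing a single index $j$ in the $\cU$-large set on which simultaneously $\mu(j)\ge k_0$, $w^{(j)}$ is close to a trace-zero unitary, and $\|[z^{(j)}_\ell,w^{(j)}]\|_2$ is small for $\ell\le k_0$, and then perturbing $w^{(j)}$ to an exact trace-zero unitary of $M$, we obtain a trace-zero unitary of $M$ almost commuting with the $\rho_{\mu(j)}(y^{(\mu(j))}_{\sigma_{\mu(j)}^{-1}(i_\ell)})$, $\ell\le k_0$ — contradicting the choice of $(N^{(\mu(j))}_0,\rho_{\mu(j)})$. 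Hence $(3)$ holds.

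For $(3)\Rightarrow(1)$, fix a countably incomplete $\cV$ and an embedding $\pi\colon N\to M^\cV$; enumerate $I=\{i_1,i_2,\dots\}$. For each $k$, apply $(3)$ with $I_0=\{i_1,\dots,i_k\}$ and $\eps=1/k$ to get finite $I'\supseteq I_0$, $\delta>0$ and $n$. Lift $\pi(x_i)$ ($i\in I'$) to bounded sequences $(m^{(j)}_i)_j$ in $(M)_1$, and invoke Proposition \ref{factor-lifting} (and the fullness input used in its proof) to perturb, for each $j$, within $o(1)$ in $\|\cdot\|_2$, so that $\{m^{(j)}_i : i\in I'\}$ generates a \twoone subfactor $N^{(j)}_0$ of $M$. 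Since $\pi$ is trace-preserving, for $\cV$-almost every $j$ the triple $(N^{(j)}_0,\{m^{(j)}_i\}_{i\in I'},\sigma)$ (with $\sigma$ a labeling of $I'$) lies in $\Theta(N,I',\delta,n)$, so $(3)$ produces trace-zero unitaries $w^{(j)}_k\in M$ with $\|[m^{(j)}_i,w^{(j)}_k]\|_2\le 1/k$ for $i\in I_0$; hence $w_k:=(w^{(j)}_k)_{j\to\cV}$ is a trace-zero unitary in $M^\cV$ with $\|[\pi(x_{i_\ell}),w_k]\|_2\le 1/k$ for $\ell\le k$. Finally, a diagonalization over $k$ using countable incompleteness of $\cV$ — lift each $w_k$ to a sequence of unitaries, intersect with a decreasing chain in $\cV$ having empty intersection on which the level-$k$ estimates hold, and let $\kappa(j)\to_\cV\infty$ record the deepest level containing $j$, setting $w^{(j)}=w^{(j)}_{\kappa(j)}$ — yields a single trace-zero unitary $w=(w^{(j)})_{j\to\cV}$ lying in $\pi(N)'\cap M^\cV$.

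The main obstacle is in $(3)\Rightarrow(1)$: to be permitted to feed a lifted copy of (a finite part of) $N$ into hypothesis $(3)$, one genuinely needs the perturbed lifts to generate a \emph{\twoone factor} inside $M$, which is precisely the point where Proposition \ref{factor-lifting} (equivalently, the Dabrowski fullness result used in its proof) is essential. With that in hand, the remaining work in both directions is the standard countable-incompleteness bookkeeping used to convert a central sequence of approximately-invariant trace-zero unitaries into a genuine one, and to ensure the degree bounds and index sets grow quickly enough that the limiting objects $\Pi$ and $w$ are literally an embedding and a central unitary respectively.
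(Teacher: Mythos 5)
Your proposal is correct and follows essentially the same route as the paper's proof: $(2)\Rightarrow(3)$ by contraposition, assembling the counterexample data along a decreasing chain witnessing countable incompleteness into an embedding $N\to M^{\cU}$ and then lifting the trace-zero central unitary to contradict the choice of data, and $(3)\Rightarrow(1)$ by lifting $\pi$ coordinatewise, perturbing the lifts via Proposition \ref{factor-lifting} so that hypothesis $(3)$ applies, and diagonalizing the resulting approximately commuting trace-zero unitaries using countable incompleteness of $\cV$. The only differences (perturbing the lifted unitary at a chosen index rather than lifting to exactly trace-zero unitaries, and stating the commutator bound against the original lifts rather than the perturbed generators) are cosmetic bookkeeping.
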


\begin{proof}
    $(1) \Rightarrow (2)$ is clear.

    $(2) \Rightarrow (3)$ Assume to the contrary; i.e., assume there exists $\eps > 0$ and a finite subset $I_0 \subset I$ such that for any finite subset $I' \subset I$ containing $I_0$, $\delta > 0$, and positive integer $n > 0$, there exists $(N_0, \{y_j\}, \sigma) \in \Theta(N, I_0, \delta, n)$ and $\pi: N_0 \to M$ an embedding such that for any unitary $w \in M$ of trace zero, we have $\|[\pi(y_{\sigma^{-1}(i)}), w]\|_2 > \eps$ for some $i \in I_0$.
    
    Thus, fix an increasing sequence of finite subsets $(I_k)_k \subset I$ containing $I_0$ and whose union is $I$, and a decreasing sequence $\delta_k \to 0$. Then for each $k$, there exists $(N_k, \{y_j\}_{j=1}^{m_k}, \sigma_k) \in \Theta(N, I_k, \delta_k, k)$ and $\pi_k: N_k \to M$ a trace-preserving embedding such that for any unitary $w \in M$ of trace zero, we have $\|[\pi_k(y_{\sigma_k^{-1}(i)}), w]\|_2 > \eps$ for some $i \in I_0$. As $\mathcal{U}$ is countably incomplete, we may choose a decreasing sequence of sets $(J_k)_k \in \mathcal{U}$ whose intersection is empty. We then define, for each $j \in J$, a map $\varphi_j: \{x_i\}_{i \in I} \to M$,
    \begin{equation*}
        \varphi_j(x_i) = \begin{cases}
            \pi_k(y_{\sigma_k^{-1}(i)}) &, \, \mathrm{if} \, j \in J_k \setminus J_{k+1} \, \mathrm{and} \, i \in I_k\\
            0 &, \, \mathrm{otherwise}
        \end{cases}
    \end{equation*}
    
    Then the map $\varphi: \{x_i\}_{i \in I} \to M^\mathcal{U}$ given by $\varphi(x_i) = (\varphi_j(x_i))_{j \to \mathcal{U}}$ is easily seen to preserve the law of $\{x_i\}_{i \in I} \subset N$ under $\tau_N$. Since $\{x_i\}$ generates $N$, $\varphi$ extends to an embedding $N \to M^\mathcal{U}$, which we shall still denote by $\varphi$.

    By our assumptions, we may pick $w \in \varphi(N)' \cap M^\mathcal{U}$ a trace-zero unitary. Lift it to $w = (w_j)_{j \to \mathcal{U}}$ where $w_j \in M$ are all trace-zero unitaries, so $\|[\varphi_j(x_i), w_j]\|_2 \to 0$ as $j \to \mathcal{U}$ for any fixed $i \in I$. In particular, as $I_0 \subset I$ is finite, there exists $A \in \mathcal{U}$ such that $\|[\varphi_j(x_i), w_j]\|_2 \leq \eps$ whenever $i \in I_0$ and $j \in A$. Since $A \cap J_1 \in \mathcal{U}$, $A \cap J_1 \neq \varnothing$. Since $J_1 = \cup_{k = 1}^\infty (J_k \setminus J_{k+1})$, $A \cap (J_k \setminus J_{k+1}) \neq \varnothing$ for some $k$. Fix $j \in A \cap (J_k \setminus J_{k+1})$. Then for any $i \in I_0 \subset I_k$, we have,
    \begin{equation*}
        \|[\pi_k(y_{\sigma_k^{-1}(i)}), w_j]\|_2 = \|[\varphi_j(x_i), w_j]\|_2 \leq \eps
    \end{equation*}

    But by assumptions on $N_k$ and $\pi_k$, we have $\|[\pi_k(y_{\sigma_k^{-1}(i)}), w_j]\|_2 > \eps$ for some $i \in I_0$, a contradiction.

    $(3) \Rightarrow (1)$ Let $\pi: N \to M^\mathcal{V}$ be an embedding. We may write $\pi(x_i) = (x_{ji})_{j \to \mathcal{V}}$ where $x_{ji}$ are elements of $M$ of operator norm at most $1$. Fix an increasing sequence of finite subsets $I_k \subset I$ whose union is $I$. Then by assumptions there exists finite subsets $I'_k \subset I$ containing $I_k$, $\delta_k > 0$, and a positive integer $n_k > 0$ such that whenever $(N_0, \{y_j\}, \sigma) \in \Theta(N, I'_k, \delta_k, n_k)$ and $\pi: N_0 \to M$ is a trace-preserving embedding, then there exists a unitary $w \in M$ of trace zero such that $\|[\pi(y_{\sigma^{-1}(i)}), w]\|_2 \leq \frac{1}{k}$ for all $i \in I_k$.

    Note that since the laws for $\{x_{ji}\}_{i \in I} \subset M$ converge to the law of $\{x_i\}_{i \in I}$ as $j \to \mathcal{V}$, for each $k$ there exists $A_k \in \cV$ such that, whenever $j \in A_k$, if we write $I'_k = \{i_{k, 1}, \cdots, i_{k, |I'_k|}\}$, then,
    \begin{equation*}
        |\tau_M(p(x_{ji_{k, 1}}, \cdots, x_{ji_{k, |I'_k|}})) - \tau_N(p(x_{i_{k, 1}}, \cdots, x_{i_{k, |I'_k|}}))| < \frac{\delta_k}{2}
    \end{equation*}
    for all $\ast$-monomials $p$ in $|I'_k|$ variables of degree less than or equal to $n_k$. By Proposition \ref{factor-lifting}, for each $j \in A_k$, there exist $\{y_{jki}\}_{i \in I'_k} \subset M$ of operator norm at most $1$ such that $\{y_{jki}\}_{i \in I'_k}$ generates a subfactor of $M$, and furthermore,
    \begin{equation*}
        \|y_{jki} - x_{ji}\|_2 < \min \left\{ \frac{\delta_k}{2n_k}, \frac{1}{k} \right\}
    \end{equation*}
    for all $i \in I'_k$. But then it is clear that,
    \begin{equation*}
        |\tau_M(p(y_{jki_{k, 1}}, \cdots, y_{jki_{k, |I'_k|}})) - \tau_N(p(x_{i_{k, 1}}, \cdots, x_{i_{k, |I'_k|}}))| < \delta_k
    \end{equation*}
    for all $\ast$-monomials $p$ in $|I'_k|$ variables of degree less than or equal to $n_k$. That is, if $N_{jk} = W^\ast(\{y_{jki}\}_{i \in I'_k}) \subset M$, then $N_{jk}$ is a separable \twoone factor and $(N_{jk}, \{y_{jki}\}_{i \in I'_k}, \sigma_k) \in \Theta(N, I'_k, \delta_k, n_k)$, where $\sigma_k$ is simply the inclusion map $I'_k \hookrightarrow I$. By assumption, there exists a trace-zero unitary $w_{jk} \in M$ such that $\|[y_{jki}, w_{jk}]\|_2 \leq \frac{1}{k}$ for all $i \in I_k$. But then, as $\|y_{jki} - x_{ji}\|_2 < \frac{1}{k}$ for all $i \in I'_k \supset I_k$, we have $\|[x_{ji}, w_{jk}]\|_2 \leq \frac{3}{k}$ for all $i \in I_k$.

    As $\cV$ is countably incomplete, we may choose a decreasing sequence of sets $(J_k)_k \in \cV$ whose intersection is empty. Let $B_k = J_k \cap (\bigcap_{l = 1}^k A_k)$. Then $B_k$ is a sequence of sets in $\cV$ that decreases to $\varnothing$. We define, for each $j$,
    \begin{equation*}
        w_j = \begin{cases}
            w_{jk} &, \, \mathrm{if} \, j \in B_k \setminus B_{k+1}\\
            0 &, \, \mathrm{otherwise}
        \end{cases}
    \end{equation*}

    One easily checks that $w = (w_j)_{j \to \mathcal{U}}$ is a trace-zero unitary that commutes with $\pi(x_i) = (x_{ji})_{j \to \cV}$ for any $i \in I$; i.e., $\pi(N)' \cap M^\mathcal{V}$ contains a trace-zero unitary.
\end{proof}

We shall denote the equivalent conditions contained in the above proposition $\Gamma_N(M)$; i.e., we shall write $\Gamma_N(M)$ when, for any countably incomplete ultrafilter $\mathcal{V}$ on any index set and for any embedding $\pi: N \to M^\mathcal{V}$, the relative commutant $\pi(N)' \cap M^\mathcal{V}$ contains a trace-zero unitary. By condition (2) in the proposition above, we see that $\Gamma_N(P)$ is preserved by changing $P$ to $Q$ which is elementarily equivalent to $P$ when $P$ and $Q$ both have density characters at most continuum, i.e.,

\begin{cor}\label{ele-equiv-gamma}
    If $P$ and $Q$ are elementarily equivalent \twoone factors which have density characters at most continuum and $N$ is any fixed separable \twoone factor, then $\Gamma_N(P)$ if and only if $\Gamma_N(Q)$.
\end{cor}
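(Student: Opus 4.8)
The plan is to reduce the statement to condition (2) of Proposition \ref{gamma-prop}, which is the formulation that transports cleanly along an isomorphism of ultrapowers. By the definition of elementary equivalence there is an ultrafilter $\mathcal{U}$ (possibly on an uncountable set $J$) with $P^{\mathcal{U}}\cong Q^{\mathcal{U}}$. I would first observe that $\mathcal{U}$ may be taken countably incomplete: if it is not, then by Lemma \ref{countably-complete} the diagonal embeddings $P\to P^{\mathcal{U}}$ and $Q\to Q^{\mathcal{U}}$ are $\ast$-isomorphisms (both $P$ and $Q$ have density character at most continuum), so $P\cong P^{\mathcal{U}}\cong Q^{\mathcal{U}}\cong Q$, and then $\Gamma_N(P)\Leftrightarrow\Gamma_N(Q)$ holds trivially since $\Gamma_N(\,\cdot\,)$ depends only on the isomorphism class of its argument. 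So we may assume $\mathcal{U}$ is a countably incomplete ultrafilter on $J$ and fix a trace-preserving $\ast$-isomorphism $\Phi:P^{\mathcal{U}}\to Q^{\mathcal{U}}$.

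Next, suppose $\Gamma_N(P)$ holds. Applying the equivalence $(1)\Leftrightarrow(2)$ of Proposition \ref{gamma-prop} to $M=P$ with this particular $\mathcal{U}$, we get that for \emph{every} embedding $\sigma:N\to P^{\mathcal{U}}$ the relative commutant $\sigma(N)'\cap P^{\mathcal{U}}$ contains a trace-zero unitary. Now let $\rho:N\to Q^{\mathcal{U}}$ be an arbitrary embedding. Then $\Phi^{-1}\circ\rho:N\to P^{\mathcal{U}}$ is an embedding, so there is a trace-zero unitary $u\in(\Phi^{-1}\circ\rho)(N)'\cap P^{\mathcal{U}}$. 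Since $\Phi$ is a trace-preserving $\ast$-isomorphism, $\Phi(u)$ is again a trace-zero unitary, and for every $x\in N$ we have $\Phi(u)\rho(x)=\Phi\bigl(u\cdot\Phi^{-1}(\rho(x))\bigr)=\Phi\bigl(\Phi^{-1}(\rho(x))\cdot u\bigr)=\rho(x)\Phi(u)$, so $\Phi(u)\in\rho(N)'\cap Q^{\mathcal{U}}$. Hence every embedding $N\to Q^{\mathcal{U}}$ has relative commutant containing a trace-zero unitary, and by the implication $(2)\Rightarrow(1)$ of Proposition \ref{gamma-prop} applied to $M=Q$ with the same $\mathcal{U}$, we conclude $\Gamma_N(Q)$. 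The reverse implication is identical upon replacing $\Phi$ by $\Phi^{-1}$, so $\Gamma_N(P)\Leftrightarrow\Gamma_N(Q)$.

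I do not expect any genuine obstacle here; all the substance is already contained in Proposition \ref{gamma-prop}, whose condition (3) is visibly a statement about the approximate $\ast$-moment data of subfactors mapping into $P$ (resp.\ $Q$) and is therefore insensitive to the choice of ambient algebra within an elementary equivalence class. The only point that requires a moment's attention is the passage to a countably incomplete witnessing ultrafilter, which is exactly what Lemma \ref{countably-complete} supplies in the density-character range at hand, and which is needed so that condition (2) of Proposition \ref{gamma-prop} is applicable.
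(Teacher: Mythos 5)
Your proof is correct and follows essentially the same route as the paper: the paper derives the corollary precisely by noting that condition (2) of Proposition \ref{gamma-prop}, taken with the countably incomplete ultrafilter witnessing $P^{\cU}\cong Q^{\cU}$ (countable incompleteness being available by Lemma \ref{countably-complete} in the stated density-character range), transports along that isomorphism. Your write-up merely makes explicit the transport of the trace-zero unitary and the reduction in the countably complete case, both of which the paper leaves implicit.
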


One particular observation we make here is that, if $P$ is non-Gamma with density character at most continuum, and $Q \equiv P$ with $Q$ separable, then $Q$ is non-Gamma as property Gamma is preserved by elementary equivalence. Then, because $P^\cU \cong Q^\cU$ for some countably incomplete $\cU$ and the diagonal embedding $\Delta_Q: Q \to Q^\cU \cong P^\cU$ has trivial relative commutant, we have,

\begin{cor}\label{non-gamma-wrt-itself}
    If $P$ and $Q$ are elementarily equivalent non-Gamma \twoone factors with $Q$ separable and $P$ having density character at most continuum, then $\Gamma_{Q}(P)$ fails.
\end{cor}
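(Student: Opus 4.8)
The plan is to produce, directly, a single witness to the failure of condition (1) of Proposition \ref{gamma-prop} for the pair $(Q,P)$: namely a countably incomplete ultrafilter $\cU$ together with an embedding $\pi\colon Q\to P^{\cU}$ whose relative commutant $\pi(Q)'\cap P^{\cU}$ contains no trace-zero unitary. The natural candidate is the one hinted at just before the statement: compose the diagonal embedding of $Q$ with an isomorphism $Q^{\cU}\cong P^{\cU}$ produced by elementary equivalence.

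The key preliminary step is to record the non-Gamma hypothesis on $Q$ in a form robust under passage to an arbitrary ultrafilter. Since $Q$ is a separable non-Gamma \twoone factor, it satisfies the usual local characterization: for every $\eps>0$ there are a finite set $F\subset(Q)_1$ and $\delta>0$ such that any $y\in(Q)_1$ with $\max_{x\in F}\|[x,y]\|_2\le\delta$ satisfies $\|y-\tau(y)1\|_2\le\eps$. From this I would deduce that $Q'\cap Q^{\cW}=\C1$ for \emph{every} ultrafilter $\cW$ on any index set, not merely free ultrafilters on $\N$: given $y=(y_j)_{j\to\cW}\in Q'\cap Q^{\cW}$ with $\|y\|_\infty\le1$, the finiteness of $F$ produces a set in $\cW$ on which $\|[x,y_j]\|_2\le\delta$ for all $x\in F$ simultaneously, whence $\|y-\tau(y)1\|_2\le\eps$; letting $\eps\to0$ forces $y\in\C1$.

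Next, since $P\equiv Q$ and both have density character at most $2^{\aleph_0}$, Lemma \ref{countably-complete} lets us fix a \emph{countably incomplete} ultrafilter $\cU$ and a trace-preserving isomorphism $\Phi\colon Q^{\cU}\to P^{\cU}$ (if it happens that $P\cong Q$, take $\cU$ free on $\N$). Set $\pi=\Phi\circ\Delta_Q\colon Q\to P^{\cU}$, a trace-preserving embedding. Identifying $Q$ with its diagonal image in $Q^{\cU}$, we obtain $\pi(Q)'\cap P^{\cU}=\Phi(Q'\cap Q^{\cU})=\Phi(\C1)=\C1$ by the previous paragraph. A scalar unitary $\lambda1$ has trace $\lambda\neq0$, so $\pi(Q)'\cap P^{\cU}$ contains no trace-zero unitary; hence condition (1) of Proposition \ref{gamma-prop} fails for $(Q,P)$, i.e.\ $\Gamma_Q(P)$ fails.

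Given the tools already assembled in the excerpt, this argument is short and largely bookkeeping; the single point that needs genuine care — the main obstacle — is the preliminary step. One must invoke the non-Gamma assumption on $Q$ through its local/uniform formulation rather than through ``$Q'\cap Q^{\cV}=\C1$ for $\cV$ free on $\N$'', because the ultrafilter $\cU$ supplied by elementary equivalence may be exotic (uncountably indexed, merely countably incomplete). Separability of $Q$ is precisely what makes that local formulation available.
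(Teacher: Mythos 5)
Your proposal is correct and follows essentially the same route as the paper: use elementary equivalence plus Lemma \ref{countably-complete} to get an isomorphism $Q^{\cU}\cong P^{\cU}$ for a countably incomplete $\cU$, and then observe that the diagonal embedding of the non-Gamma separable factor $Q$ has relative commutant $\C1$, which contains no trace-zero unitary, so condition (1) of Proposition \ref{gamma-prop} fails. The one point where you go beyond the paper's terse argument is in justifying, via Connes' local (spectral-gap) characterization of non-Gamma for separable \twoone factors, that $Q'\cap Q^{\cU}=\C1$ even when $\cU$ is an exotic countably incomplete ultrafilter on an uncountable index set --- a detail the paper leaves implicit, and which you handle correctly.
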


\bibliographystyle{amsalpha}
\bibliography{inneramen}

\end{document}